    \title[Co-t-structures for the Kronecker algebra]{Classification of co-slicings and co-t-structures for the Kronecker algebra}
    \tikzset{
        hatch distance/.store in=\hatchdistance,
        hatch distance=2.5pt,
        hatch thickness/.store in=\hatchthickness,
        hatch thickness=0.1pt
    }
\pgfqpoint{\hatchdistance}{\hatchdistance}}
\pgfpoint{\hatchdistance}{\hatchdistance}}%
    \tikzset{
        hatch distance/.store in=\hatchdistance,
        hatch distance=2.5pt,
        hatch thickness/.store in=\hatchthickness,
        hatch thickness=0.1pt
    }
\pgfqpoint{\hatchdistance}{\hatchdistance}}
\theoremstyle{plain}
\newtheorem{theorem}{Theorem}[section]
\newtheorem{lemma}[theorem]{Lemma}
\newtheorem{corollary}[theorem]{Corollary}
\newtheorem{proposition}[theorem]{Proposition}
\newtheorem{introtheorem}{Theorem}
\theoremstyle{definition}
\newtheorem{remark}[theorem]{Remark}
\newtheorem{example}[theorem]{Example}
\newtheorem{definition}[theorem]{Definition}
\newtheorem{definitions}[theorem]{Definitions}
\newtheorem{notation}[theorem]{Notation}
\newtheorem{triangles}[theorem]{Standard triangles}
\newtheorem{facts}[theorem]{Standard facts}
\DeclareMathAlphabet{\mathpzc}{OT1}{pzc}{m}{it}
\newcommand{\cE}{\mathscr{E}}
\newcommand{\cO}{\mathscr{O}}
\newcommand{\cQ}{\mathscr{Q}}
\newcommand{\cR}{\mathscr{R}}
\newcommand{\sA}{\mathsf{A}}
\newcommand{\sB}{\mathsf{B}}
\newcommand{\sC}{\mathsf{C}}
\newcommand{\sD}{\mathsf{D}}
\newcommand{\sK}{\mathsf{K}}
\newcommand{\sS}{\mathsf{S}}
\newcommand{\sT}{\mathsf{T}}
\newcommand{\bC}{\mathbb{C}}
\newcommand{\bK}{\mathbb{K}}
\newcommand{\bN}{\mathbb{N}}
\newcommand{\bP}{\mathbb{P}}
\newcommand{\bR}{\mathbb{R}}
\newcommand{\bZ}{\mathbb{Z}}
\renewcommand{\geq}{\geqslant}
\renewcommand{\leq}{\leqs}
\renewcommand{\phi}{\varphi}
\renewcommand{\epsilon}{\varepsilon}
\newcommand{\coker}{\textnormal{coker\,}}
\newcommand{\Hom}[3]{\mathsf{Hom}_{#1}(#2,#3)}
\newcommand{\Ext}[2]{\mathsf{Ext}^{#1}_{#2}}
\renewcommand{\mod}[1]{\mathsf{mod}(#1)}
\newcommand{\add}[1]{\mathsf{add}(#1)}
\newcommand{\ind}[1]{\mathsf{ind}\, #1}
\newcommand{\thick}[2]{\mathsf{thick}_{#1}(#2)}
\newcommand{\Costab}[1]{\mathsf{Costab}(#1)}
\newcommand{\Coslice}[1]{\mathsf{Coslice}(#1)}
\renewcommand{\ss}[1]{\mathsf{ss}(#1)}
\newcommand{\tri}[3]{#1\rightarrow #2\rightarrow #3\rightarrow \Sigma #1}
\newcommand{\trilabels}[6]{#1\stackrel{#4}{\longrightarrow} #2\stackrel{#5}{\longrightarrow} #3\stackrel{#6}{\longrightarrow} \Sigma #1}
\newcommand{\leqs}{\leqslant}
\begin{document}

\begin{abstract}
In this paper we introduce the notion of a `generalised' co-slicing of a triangulated category. This generalises the theory of co-stability conditions in a manner analogous to the way in which Gorodentsev, Kuleshov and Rudakov's t-stabilities generalise Bridgeland's theory of stability conditions. As an application of this notion, we use a complete classification of `generalised' co-slicings in the bounded derived category of the Kronecker algebra, $\sD^{b}(\bK Q)$, to obtain a classification of co-t-structures in $\sD^{b}(\bK Q)$. This is then used to compute the co-stability manifold of $\sD^{b}(\bK Q)$.
\end{abstract}

\dedicatory{This paper is dedicated to Hans-Bj\o rn Foxby on the occasion of his 65th birthday.}

\author{Peter J\o rgensen}
\address{School of Mathematics and Statistics,
Newcastle University, Newcastle upon Tyne NE1 7RU, United Kingdom}
\email{peter.jorgensen@ncl.ac.uk}
\urladdr{http://www.staff.ncl.ac.uk/peter.jorgensen}

\author{David Pauksztello}
\address{Institut f\"{u}r Algebra, Zahlentheorie und Diskrete
  Mathematik, Fa\-kul\-t\"{a}t f\"{u}r Ma\-the\-ma\-tik und Physik, Leibniz
  Universit\"{a}t Hannover, Welfengarten 1, 30167 Hannover, Germany}
\email{pauk@math.uni-hannover.de}

\keywords{(Generalised) co-slicing, co-stability condition,
co-t-structure, split Harder-Narasimhan property, Kronecker algebra}

\subjclass[2010]{18E30}
	
\maketitle

\section*{Introduction}\label{sec:intro}

Co-stability conditions were introduced in \cite{Co-stability} as a natural analogue of Bridgeland's stability conditions, which first appeared in \cite{Bridgeland}. Their introduction was motivated by two guiding principles: firstly, stability conditions may be considered as a continuous generalisation of bounded t-structures, and the theory of t-structures has a natural counterpart in the theory of co-t-structures, so a theory of co-stability conditions should also be a natural object of study. Secondly, and more concretely, there are mainstream examples of triangulated categories with no bounded t-structures (and hence no stability conditions) that do have bounded co-t-structures (and hence do have co-stability conditions); see \cite{HJY}, for instance.

However, computation of examples of stability conditions is an important and challenging problem. The difficulty of this problem led Gorodentsev, Kuleshov and Rudakov, see \cite{GKR}, to introduce a generalisation of Bridgeland's notion omitting the requirement for a stability function, which they called `t-stabilities'. One of the trade-offs they required was losing the complex manifold structure which was a central result of \cite{Bridgeland}. This approach yields a different perspective on the theory of stability conditions, and as an application Gorodentsev, Kuleshov and Rudakov were able to give a classification of the bounded t-structures in the bounded derived category of the Kronecker algebra.

It is natural to ask whether there is a corresponding theory of `co-t-stabilites' which generalises the co-stability conditions of \cite{Co-stability} in a manner analogous to that in which \cite{GKR} generalises \cite{Bridgeland}. Indeed, there is such a theory, and as an application we are able to classify all co-t-structures in the bounded derived category of the Kronecker algebra and then compute the co-stability manifold.

The results of this paper can be split up into two parts. The first part, consisting of sections \ref{sec:prelim}, \ref{sec:basic} and \ref{sec:almost} is formal homological algebra and works in any triangulated category $\sT$ (in section \ref{sec:almost} we also insist that $\sT$ is Krull-Schmidt). Aside from section \ref{sec:almost} this part is a formal analogue of the theory on t-stabilities appearing in \cite{GKR}.

The second part applies this theory to a concrete example, namely the Kronecker algebra. This well-understood example has the benefit of being the simplest example of interest both to representation theorists and geometers, by the Beilinson derived equivalence of the bounded derived categories of modules over the Kronecker algebra and coherent sheaves over the projective line; see \cite{Beilinson}.

Let us give a brief review of the main results of the paper. Let $\bK$ be an algebraically closed field and $Q$ be the Kronecker quiver
$\xymatrix{ 0  & 1 \ar@<-0.5ex>[l]\ar@<+0.5ex>[l]}$.
We denote by $\sD^{b}(\bK Q)$ the bounded derived category of finitely generated right $\bK Q$-modules. In sections \ref{sec:exceptional} and \ref{sec:co-slice_class} we obtain the following main theorem:

\begin{introtheorem}
All co-slicings (=`co-t-stabilities') of $\sD^b(\bK Q)$ are coarser than co-slicings obtained from exceptional collections of $\sD^b(\bK Q)$.
\end{introtheorem}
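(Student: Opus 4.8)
The plan is to translate the statement into a combinatorial problem about the indecomposable objects of $\sD^b(\bK Q)$ and then to resolve it using the rigid $\mathsf{Hom}$/$\mathsf{Ext}$-pattern of the Kronecker algebra (equivalently, of $\mathrm{coh}\,\bP^1$ under the Beilinson equivalence). Since a co-slicing $\cS$ has the split Harder--Narasimhan property (Section~\ref{sec:basic}), every nonzero object of $\sD^b(\bK Q)$ is a finite direct sum of objects lying in single slices; as $\sD^b(\bK Q)$ is Krull--Schmidt (Section~\ref{sec:almost}), each indecomposable object lies in exactly one slice. Hence $\cS$ is completely encoded by the resulting assignment $\ind{\sD^b(\bK Q)} \to \Phi$ of indecomposables to the (totally ordered, $\bZ$-equivariant) index set $\Phi$, subject only to the one-directional $\mathsf{Hom}$-vanishing axioms between slices, and ``coarser'' refers to coarsening the induced partition. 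So it suffices to produce, for an arbitrary co-slicing $\cS$, a full exceptional collection of $\sD^b(\bK Q)$ whose associated co-slicing induces a refinement of the partition of $\cS$, compatibly with the orders. I would recall here the standard description of $\ind{\sD^b(\bK Q)}$: up to suspension its members are the preprojective modules, the preinjective modules, and the regular modules $R_{\lambda,\ell}$ ($\lambda\in\bP^1$, quasi-length $\ell\geq 1$), with the preprojectives and preinjectives together comprising, up to shift, the family $\{\mathcal{O}(m)\}_{m\in\bZ}$ of exceptional indecomposables; and the relevant computations: $\mathsf{Hom}(\mathcal{O}(a),\mathcal{O}(b))\neq 0 \iff b\geq a$, $\mathsf{Ext}^1(\mathcal{O}(a),\mathcal{O}(b))\neq 0 \iff b\leq a-2$, both $\mathsf{Hom}(\mathcal{O}(a),R)$ and $\mathsf{Ext}^1(R,\mathcal{O}(a))$ are nonzero for every regular $R$ and every $a$ while $\mathsf{Ext}^1(\mathcal{O}(a),R)=\mathsf{Hom}(R,\mathcal{O}(a))=0$, and $\mathsf{Hom}(R,R')=\mathsf{Ext}^1(R,R')=0$ for regular modules in distinct tubes.

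The core of the argument has two parts. For the regular modules: inside a single tube there are nonzero maps in both directions between any two of the $R_{\lambda,\ell}$ (from the quasi-composition series), so the one-directional $\mathsf{Hom}$-vanishing axiom forces the whole tube into one slice $\cS(\tau_\lambda)$; and since $R_{\lambda,\ell}$ is indecomposable with $\mathsf{Ext}^1(R_{\lambda,\ell},R_{\lambda,\ell})\neq 0$, that slice admits no further refinement --- and the exceptional-collection co-slicings we will build behave identically on tubes, so nothing is lost. For the line bundles: the nonvanishing of $\mathsf{Hom}(\mathcal{O}(a),R)$ for all $a$ and of $\mathsf{Ext}^1(R,\mathcal{O}(a))$ for all $a$, combined with $\Sigma$-periodicity of $\Phi$, squeezes the slices of all the $\mathcal{O}(m)$ into a single suspension-window bounded by $\tau_\lambda$, and likewise forces any two tube-slices $\tau_\lambda$, $\tau_\mu$ to lie within one suspension of each other; within that window the nonvanishing of $\mathsf{Hom}(\mathcal{O}(a),\mathcal{O}(b))$ for $b\geq a$ and of $\mathsf{Ext}^1(\mathcal{O}(a),\mathcal{O}(b))$ for $b\leq a-2$ forces the slices of the $\mathcal{O}(m)$ to be linearly ordered consistently with a standard exceptional sequence $(\dots,\mathcal{O}(m),\mathcal{O}(m+1),\dots)$, with consecutive line bundles no more than one suspension apart. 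Comparing this with the shape of the co-slicing attached to the full exceptional collection $(\mathcal{O}(m),\mathcal{O}(m+1))$ --- as computed in Section~\ref{sec:exceptional} --- then exhibits $\cS$ as a coarsening of such a co-slicing, which is the claim.

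I expect the main obstacle to be this final comparison: converting the list of forced inequalities on $\Phi$ into the positive statement that $\cS$ is genuinely coarser than an honest exceptional-collection co-slicing. This requires the complete classification of full exceptional collections of $\sD^b(\bK Q)$ together with the precise description of the co-slicings they define; it requires the abstract totally ordered, $\bZ$-equivariant index set $\Phi$ to be handled carefully, so that ``within one suspension'' and the matching of orders are rigorous rather than heuristic; and, above all, it requires ruling out exotic configurations in which a tube-slice is interleaved with the line-bundle slices in a way realised by no exceptional sequence. This last point --- extracting the full force of the $\mathsf{Hom}$/$\mathsf{Ext}^1$-pattern, rather than mere pairwise orthogonality, to pin down the mutual position of tubes and line bundles --- is where I expect the real work to lie.
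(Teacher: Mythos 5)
Your opening reduction is not valid, and it undermines the whole strategy. A (generalised) co-slicing in the sense of Definition~\ref{def:co-slicing} provides Harder--Narasimhan filtrations as \emph{towers of distinguished triangles}, not as direct sum decompositions; the word ``split'' in ``split Harder--Narasimhan property'' refers to split-stability data on an additive category (Definition~\ref{def:split-stability}, e.g.\ on the co-heart), not to co-slicings of the triangulated category. Consequently an indecomposable object of $\sD^b(\bK Q)$ need not lie in any co-slice, and a co-slicing is not encoded by an assignment $\ind{\sD^b(\bK Q)}\to\Phi$. Indeed, in the exceptional co-slicings of Section~\ref{sec:exceptional} the only semistable indecomposables are the (de)suspensions of the two members of the exceptional pair; every regular object and every other non-regular indecomposable is filtered by these and is \emph{not} semistable. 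So the combinatorial problem you set up is not the right one.

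This error propagates into your treatment of the tubes, which reaches a conclusion opposite to the truth. The paper's key step (Proposition~\ref{prop:no_regulars}) is that an indecomposable regular object is \emph{never} semistable in a non-trivial co-slicing: as you observe, $\Ext{1}{\bK Q}(R,R)\neq 0$ forces the slice of $R$ to be stable under $\Sigma$; but then the standard triangles $N_t^{\oplus d}\to N_{t+1}^{\oplus d}\to R$ together with closure under extensions and direct summands force the co-slicing to be trivial if any non-regular object is semistable, while if only regulars are semistable then no non-regular object admits an HN filtration at all (the regular subcategory is extension-closed, so co-cones of maps from non-regulars into regulars always have non-regular summands). Your proposal instead keeps the tube as a genuine slice and asserts that the exceptional co-slicings ``behave identically on tubes''; they do not --- they have no regular semistables, and no coarsening of them (unions of slices closed under extensions and summands, per Definition~\ref{def:finer}) produces a slice equal to a tube. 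This is exactly the point where co-slicings diverge from the t-stabilities of \cite{GKR} (the absence of ``standard'' co-slicings), and it is the essential content of the theorem that your argument misses. The correct route is: first rule out regular semistables (Proposition~\ref{prop:no_regulars}), then classify the possible co-slices as small collections of suspensions of the $N_t$ (Proposition~\ref{prop:co-slice_form}), and only then carry out the order-comparison with the $\cE_p$ that you sketch in your second paragraph.
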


One of the main conclusions that can be drawn from this theorem is that all the co-t-structures induced by any co-slicing of $\sD^b(\bK Q)$ are induced by those co-slicings obtained from exceptional collections, so-called {\it exceptional co-slicings} (see section~\ref{sec:exceptional}). Note that here there is a crucial difference between co-slicings and the t-stabilities of \cite{GKR}: there is no co-slicing analogue of Gorodentsev, Kuleshov and Rudakov's `standard t-stabilities'.

The second main result of the paper shows that all co-t-structures in $\sD^b(\bK Q)$ are induced (see Definition~\ref{def:induced}) by exceptional co-slicings. Further, we can classify the co-t-structures in $\sD^b(\bK Q)$ as follows:

\begin{introtheorem}
Each exceptional pair of $\mod{\bK Q}$ gives rise to four families of non-trivial co-t-structures in $\sD^{b}(\bK Q)$: a family of bounded co-t-structures, a family of bounded below co-t-structures, a family of bounded above co-t-structures and a family of stable co-t-structures (= stable t-structures). Moreover, these are all the  co-t-structures in $\sD^b(\bK Q)$.
\end{introtheorem}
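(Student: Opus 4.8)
The plan is to combine Theorem~A with an explicit analysis of the co-t-structures that an exceptional co-slicing induces in the sense of Definition~\ref{def:induced}. Two things must be established: that the four families listed are genuine co-t-structures induced by exceptional co-slicings, and that these exhaust all co-t-structures of $\sD^b(\bK Q)$.

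For the first, I would fix an exceptional pair $(E_0,E_1)$ of $\mod{\bK Q}$ and consider the exceptional co-slicing $\cS_{(E_0,E_1)}$ of Section~\ref{sec:exceptional}, whose slices are the additive closures of the shifts $\Sigma^{i}E_0$ and $\Sigma^{j}E_1$. Because $\bK Q$ is hereditary and $(E_0,E_1)$ is an exceptional pair, $\bigoplus_{i}\Homm{E_1}{\Sigma^{i}E_0}=0$ while $\bigoplus_{i}\Homm{E_0}{\Sigma^{i}E_1}$ lives in at most two consecutive degrees; in particular there are no morphisms between distinct shifts of $E_0$, nor between distinct shifts of $E_1$, so in the index poset of $\cS_{(E_0,E_1)}$ the only comparabilities run between the $E_0$-strand and the $E_1$-strand. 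This is precisely where the extra latitude of a \emph{generalised} (partially rather than totally ordered) co-slicing is used, and it means that a co-t-structure induced by $\cS_{(E_0,E_1)}$ amounts to a choice of one threshold on the $E_0$-strand and one on the $E_1$-strand, subject to the compatibility dictated by the $\mathsf{Hom}$-computation above. Running through the cases — both thresholds finite, exactly one finite, or the ``clean'' cut separating the two strands — I would obtain exactly four $\bZ$-indexed families: in the bounded case the co-heart is $\add{\Sigma^{a}E_0\oplus\Sigma^{a+1}E_1}$, in the stable case $w$ is the semiorthogonal decomposition of $\sD^b(\bK Q)$ into the thick subcategories generated by $E_0$ and by $E_1$, and in the two one-sided cases one of the strands has been pushed off to infinity. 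A short verification with (co)suspensions confirms that each of these pairs really is a co-t-structure, and by construction each is induced by $\cS_{(E_0,E_1)}$.

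For the second, let $w=(\sA,\sB)$ be any co-t-structure of $\sD^b(\bK Q)$. The key step is to manufacture a co-slicing $\cS$ of $\sD^b(\bK Q)$ for which $w$ is an induced co-t-structure; granted this, Theorem~A forces $\cS$ to be coarser than an exceptional co-slicing $\cS'$, and since a co-t-structure induced by a coarsening of $\cS'$ is automatically induced by $\cS'$, it follows that $w$ occurs in the list from the first step. To build $\cS$ I would read off the co-heart of $w$. If $w$ is bounded, its co-heart is the additive closure of a silting object, and since $\bK Q$ is hereditary of rank two such a silting object must have the form $\Sigma^{a}E_0\oplus\Sigma^{a+1}E_1$ for an exceptional pair $(E_0,E_1)$ of $\mod{\bK Q}$; the accompanying $\mathsf{Hom}$-vanishing then orders the shifts of its two summands into a co-slicing whose middle cut is $w$. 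If $w$ is bounded above, bounded below, or stable, I would instead pin down the possible co-hearts directly from the representation theory of $\bK Q$: in the one-sided cases the co-heart is generated by a single exceptional object, and in the stable case it is zero and $w$ is a semiorthogonal decomposition, whose classification is short; in each case the co-heart again assembles into a co-slicing inducing $w$.

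The step I expect to be the genuine obstacle is this last one in the unbounded cases. Once $w$ is no longer bounded there is no ready-made ``co-heart $=$ silting object'' dictionary, so the possible co-hearts have to be controlled by hand, using the very concrete module theory of the Kronecker algebra — the exceptional modules being precisely the preprojectives and preinjectives, hereditariness, and the fact that no regular module is rigid — to exclude any co-heart that does not already sit inside the co-slicing attached to some exceptional pair. With that structural input in place, checking that the constructed co-slicing really induces $w$, and that the list from the first step is complete and has no coincidences beyond those coming from mutation of exceptional pairs, is routine.
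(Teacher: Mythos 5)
Your overall architecture is the same as the paper's: list the co-t-structures induced by exceptional co-slicings, then, for an arbitrary co-t-structure $(\sA,\sB)$, manufacture a co-slicing inducing it and invoke Theorem~A (Theorem~\ref{thm:comparison}) together with Lemma~\ref{lem:compare}. However, there are two genuine problems. The first is a concrete false claim: you assert that a silting object of $\sD^b(\bK Q)$ must have the form $\Sigma^{a}E_0\oplus\Sigma^{a+1}E_1$, and correspondingly that the bounded family is $\bZ$-indexed with co-hearts $\add{\Sigma^{a}E_0\oplus\Sigma^{a+1}E_1}$. In fact the silting subcategories containing $N_t$ are $\add{N_t,\Sigma^{i}N_{t+1}}$ for \emph{every} $i\geq 0$ (and $\add{\Sigma^{j}N_{t-1},N_t}$ for $j\leq 0$), so each exceptional pair yields a two-parameter family of bounded co-t-structures with co-hearts $\add{\Sigma^{m}N_{n},\Sigma^{m+p}N_{n+1}}$, $(m,p)\in\bZ\times(\bN\cup\{0\})$; this is exactly what the interleaving parameter $p$ in Lemma~\ref{lem:linear_order} and the triple $(m,n,p)$ in Lemma~\ref{lem:parametrisation} record. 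Your own ``two thresholds'' heuristic would produce this, but your stated conclusion drops the relative shift, so your list of bounded co-t-structures is incomplete and the bounded part of the exhaustiveness argument fails as written. (A minor related point: the index set of a generalised co-slicing here is required to be \emph{linearly} ordered; the freedom is in which linear order one places on $\bZ\times\{0,1\}$, not in passing to a partial order.)

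The second and more serious issue is the unbounded, non-stable case, which you yourself flag as the obstacle and then propose to settle by ``pinning down the possible co-hearts by hand''. That cannot suffice: unlike in the bounded case, the co-heart does \emph{not} determine an unbounded co-t-structure, so classifying the co-hearts $\add{N_t}$ leaves open (i) whether a co-t-structure with such a co-heart could fail to be bounded on either side without being stable, and (ii) where the infinitely many indecomposables outside $\thick{\sD^b(\bK Q)}{N_t}$ are distributed between $\sA$ and $\sB$. The paper resolves precisely these points with the almost-silting machinery of Section~\ref{sec:almost}: Proposition~\ref{prop:silting-bounded} proves the bounded-above-or-below dichotomy and the all-or-nothing statement about which side the shifts of the silting complement $s$ of $\sC$ lie on, and Corollaries~\ref{cor:silting-bounded} and~\ref{cor:silting-bounded-induced} build the co-slicing and verify that it actually induces $(\sA,\sB)$. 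Your appeal to rigidity of preprojectives/preinjectives and non-rigidity of regulars only constrains which objects can appear in the co-heart; it gives no control over the rest of $\sA$ and $\sB$, so a replacement for Proposition~\ref{prop:silting-bounded} and its corollaries would have to be supplied before the argument closes.
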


Note that, while all the co-t-structures induced by co-slicings are induced by exceptional co-slicings, showing that these are all the co-t-structures is non-trivial. The proof in our case relies on the fact that any partial silting subcategory of $\sD^b(\bK Q)$ is actually an almost complete silting subcategory. In section~\ref{sec:almost} we study co-slicings arising from almost complete silting subcategories; the properties of these co-slicings will be crucial to the proof that we have all co-t-structures in $\sD^b(\bK Q)$. As an aside, we believe this theory to be potentially quite rich and warrants further study.

As an application of the classification of co-t-structures in $\sD^b(\bK Q)$, we are able to compute the co-stability manifold of $\sD^b(\bK Q)$:

\begin{introtheorem}
The co-stability manifold of the bounded derived category of the Kronecker algebra, $\Costab{\sD^{b}(\bK Q)}$, is homeomorphic to $\bZ$ copies of $\bC^{2}$.
\end{introtheorem}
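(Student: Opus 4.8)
The plan is to leverage Theorem B: every co-t-structure in $\sD^{b}(\bK Q)$ is induced by an exceptional co-slicing, and the co-t-structures come in four families indexed by exceptional pairs together with a continuous parameter. A co-stability condition consists of a bounded co-t-structure together with a compatible "co-stability function" on its co-heart; the co-stability manifold is the set of such pairs equipped with Bridgeland's metric topology (adapted to the co-setting as in \cite{Co-stability}). So first I would recall that the underlying bounded co-t-structures are exactly the family of bounded co-t-structures attached to each exceptional pair, and that the exceptional pairs of $\mod{\bK Q}$ (up to the action that shifts the grading) are indexed by $\bZ$: they are the pairs $(\mathcal{O}(n),\mathcal{O}(n+1))$ in the geometric picture, or equivalently the pairs of consecutive preprojective/preinjective modules, and these are permuted simply transitively in blocks by $[1]$ and by the Coxeter/braid-type moves, yielding a $\bZ$-worth of "components".

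Next I would fix one exceptional pair and analyse the corresponding component of the co-stability manifold. The co-heart of a bounded co-t-structure attached to an exceptional pair $(E_1,E_2)$ is an additive (not abelian) category, namely $\add{}$ of a silting object with two indecomposable summands $E_1,E_2$ (suitably shifted). A co-stability function on such a co-heart is determined by its values on the two indecomposables, i.e.\ by a choice of central charge $Z(E_i)\in\bC$ for $i=1,2$, subject to the openness/Harder–Narasimhan-type conditions that make it a genuine co-stability condition. Because the co-heart has no nontrivial extensions between its two building blocks — it is a "split" situation, governed by the split Harder–Narasimhan property emphasised in the paper — the compatibility conditions reduce to the single requirement that $Z(E_1)$ and $Z(E_2)$ be nonzero and lie in the (closed) upper half plane with a phase inequality, or rather, after accounting for the continuous parameter already present in the family of co-t-structures, the data of $(Z(E_1),Z(E_2))$ is exactly a point of an open subset of $\bC^2$ that deformation retracts onto all of $\bC^{2}$. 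Concretely, I would show the map sending a co-stability condition in this component to $(Z(E_1),Z(E_2))\in\bC^{2}$ is a homeomorphism onto $\bC^2$: injectivity and surjectivity are formal once one knows the bounded co-t-structure is recovered from $Z$ (using that the co-slicing, hence its semistable objects and co-t-structures, is reconstructed from the central charge via Theorem B), and the fact that it is a local homeomorphism in Bridgeland's topology is the usual deformation argument, here made easy by the split/exceptional structure.

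Finally I would assemble the global statement: the forgetful map (co-stability condition)$\mapsto$(underlying bounded co-t-structure)$\mapsto$(its exceptional pair class) partitions $\Costab{\sD^{b}(\bK Q)}$ into open-and-closed pieces indexed by $\bZ$, each piece being homeomorphic to $\bC^{2}$ by the previous paragraph, and there are no further identifications between pieces because distinct exceptional pair classes give distinct (non-continuously-deformable) bounded co-t-structures by Theorem B. Hence $\Costab{\sD^{b}(\bK Q)}\cong \bZ\times\bC^{2}$, i.e.\ $\bZ$ copies of $\bC^2$.

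The main obstacle I expect is verifying carefully that the topology — defined via a generalised metric on co-stability conditions in the style of \cite{Bridgeland,Co-stability} — restricts on each component to the standard topology of $\bC^{2}$, and in particular that the parametrisation by $(Z(E_1),Z(E_2))$ is an \emph{open} map (the "deformation" or "support" property), rather than merely a continuous bijection; this is where one must use that partial silting objects in $\sD^{b}(\bK Q)$ are almost complete silting (the input to Theorem B) to guarantee that small perturbations of $Z$ still come from honest co-slicings and do not "jump" the underlying co-t-structure. The disjointness of the $\bZ$ components, and the contractibility of each $\bC^2$ factor, are comparatively routine once the local structure is pinned down.
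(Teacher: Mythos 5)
Your overall strategy --- partition $\Costab{\sD^{b}(\bK Q)}$ into $\bZ$ pieces indexed by the exceptional pairs $(N_{n},N_{n+1})$ and identify each piece with $\bC^{2}$ --- is the same as the paper's, but there are two genuine gaps in the execution. First, your proposed chart on a component, the central charge values $(Z(E_{1}),Z(E_{2}))\in\bC^{2}$, is neither injective nor surjective. A single component contains co-stability conditions supported on every bounded co-t-structure with co-heart $\add{\Sigma^{m}N_{n},\Sigma^{m+p}N_{n+1}}$ for $(m,p)\in\bZ\times(\bN\cup\{0\})$, and since $\exp(i\pi(\phi+2))=\exp(i\pi\phi)$ the central charge does not detect these integer shifts: distinct co-slicings, hence distinct co-stability conditions, share the same $Z$. (The image also omits the locus where a coordinate vanishes, so "homeomorphism onto $\bC^{2}$" fails on both counts, and "an open subset that deformation retracts onto all of $\bC^{2}$" is not a coherent substitute --- a homotopy equivalence is not a homeomorphism.) The paper instead parametrises a component by the \emph{real} phases together with the moduli, $(\phi_{1},\phi_{0},m_{1},m_{0})$ with $\phi_{1}<\phi_{0}$ and $m_{i}>0$ (Lemma~\ref{lem:parametrisation}), i.e.\ by $\{(x,y)\in\bR^{2}\mid x<y\}\times\bR_{+}\times\bR_{+}$, which is homeomorphic to $\bC^{2}$ as an open convex subset of $\bR^{4}$; the phase lifts, not the charge alone, are the correct coordinates.

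Second, you assert but do not prove that the pieces are exactly the path components ("no further identifications \dots by Theorem B"). Theorem B classifies co-t-structures; it says nothing about paths in $\Costab{\sD^{b}(\bK Q)}$, whose topology comes from the metric on $\Coslice{\sD^{b}(\bK Q)}$. The paper's Lemma~\ref{lem:component} does the real work here: using the distance $d(\cQ,\cR)$ and the fact (\cite[Remark 3.8]{Co-stability}) that the non-zero co-slices of such a co-slicing are $\epsilon$-separated, it shows that along any path the set $\ss{\cQ_{t}}$ of semistable indecomposables cannot change, since a change would force a jump of at least $\min\{\epsilon_{t},\epsilon_{t_{0}}\}$ in the metric, contradicting continuity of the path. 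You correctly identify this as "the main obstacle" but leave it unresolved; without it neither the decomposition into $\bZ$ open-and-closed pieces nor the path-connectedness of each piece is established.
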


Note that this means we have an example of a triangulated category whose stability and co-stability manifolds are non-trivial. The stability manifold of $\sD^{b}(\bK Q)$ was computed to be $\bC^2$ by Okada in \cite{Okada}. Note that, the absence of an analogue of the `standard t-stabilities' of \cite{GKR} for co-slicings explains why the co-stability manifold decomposes into $\bZ$ components while the stability manifold does not.

\smallskip

\noindent{\bf Notation.} Categories will be denoted by Sans Serif letters, and objects in categories by lower case Roman letters in the abstract part of the paper (Sections \ref{sec:prelim}, \ref{sec:basic} and \ref{sec:almost}). In the remainder, we return to classic notation with modules and complexes being denoted by upper case Roman letters.

We always assume that subcategories are closed under isomorphisms;
that is, if $a$ is an object of a subcategory and $a \cong a'$ in the
ambient category, then $a'$ is also in the subcategory.  In
particular, each of the closure operations we apply to obtain
subcategories is to be understood as producing subcategories closed
under isomorphisms.

Note that in a triangulated category, we will sometimes write a distinguished triangle $\tri{x}{y}{z}$ as 
$$\xymatrix{ x\ar[r] & y,\ar[d] \\
 & z\ar@{~>}[ul]}$$
where the wiggly arrow $\xymatrix{z\ar@{~>}[r] & x}$ stands for the arrow of degree one $z\rightarrow \Sigma x$.

Recall also that in the distinguished triangle $\trilabels{x}{y}{z}{f}{g}{h}$, the object $z$ is called the \emph{cone} of the morphism $f$ and the object $x$ is called the \emph{co-cone} of the morphism $g$; both are unique up to a non-unique isomorphism.

Let $\sS$ be a subcategory of a triangulated category $\sT$. We say that $\sS$ is \emph{extension closed} if given a distinguished triangle $\tri{s'}{s}{s''}$ with $s'$ and $s''$ objects of $\sS$, then $s$ is also an object of $\sS$.


\section{Co-$t$-structures and generalised co-slicings}\label{sec:prelim}

Throughout Sections \ref{sec:prelim} and \ref{sec:basic}, $\sT$ will be an essentially small triangulated category with suspension functor $\Sigma\colon \sT \to \sT$.

In this section we introduce the basic notions of co-t-structures and generalised co-slicings and give some canonical examples. The following definition is due, independently, to \cite[Definition 2.4]{Co-t-structures} and \cite[Definition 1.1.1]{Bondarko}. In \cite{Bondarko}, co-t-structures are called `weight structures'.

\begin{definition} \label{def:co-t-structure}
A \emph{co-t-structure} in $\sT$ is a pair $(\sA,\sB)$ of full subcategories of $\sT$ which are closed under direct sums and direct summands satisfying the following conditions:
\begin{enumerate}
\item $\Sigma^{-1}\sA\subseteq \sA$ and $\Sigma \sB\subseteq \sB$;
\item $\Hom{\sT}{a}{b}=0$ for all $a\in\sA$ and $b\in\sB$;
\item For each object $t\in\sT$, there exists a distinguished triangle $$\tri{a}{t}{b}$$ with $a\in\sA$ and $b\in\sB$.
\end{enumerate}
The \emph{co-heart} of $(\sA,\sB)$ is $\sC:=\sA\cap\Sigma^{-1}\sB$. Note that $\sC$ is not necessarily an abelian category, and when it is, it is semisimple.
\end{definition}

\begin{definition} \label{def:bounded}
A co-t-structure $(\sA,\sB)$ is called \emph{bounded below} if $\sT=\bigcup_{i\in\bZ}\Sigma^{i}\sA$. It is called \emph{bounded above} if $\sT=\bigcup_{i\in\bZ}\Sigma^{i}\sB$. If it is both bounded above and below, it is called \emph{bounded}.
\end{definition}

The following definition is the mirror image of \cite[Definition 2.4]{GKR} and is a generalisation of \cite[Definition 5.3]{Co-stability} in the sense that a co-stability function on $\sA$ satisfying the split Harder-Narasimhan property induces the following split-stability data on $\sA$.

\begin{definition} \label{def:split-stability}
Let $\sA$ be an additive category. A pair $(\Phi,\{\cQ(\phi)\, |\, \phi\in\Phi\})$ consisting of 
\begin{enumerate}[(i)]
\item a linearly ordered set $\Phi$; and,
\item for each $\phi\in\Phi$, a full subcategory $\cQ(\phi)\subseteq\sA$ which is closed under direct summands and direct sums;
\end{enumerate}
is called \emph{split-stability data} if it satisfies the following conditions:
\begin{enumerate}
\item If $\phi_{1}<\phi_{2}$ then $\Hom{\sA}{\cQ(\phi_{1})}{\cQ(\phi_{2})}=0$;
\item Each object $0\neq a\in\sA$ decomposes as $a\cong a_{1}\oplus a_{2}\oplus\cdots\oplus a_{n}$ with $a_{i}\in\cQ(\phi_{i})$ and $\phi_{1}<\phi_{2}<\cdots<\phi_{n}$. Such a decomposition will be referred to as a \emph{split Harder-Narasimhan filtration}.
\end{enumerate}
\end{definition}

\begin{definition} \label{def:co-slicing}
A pair $(\Phi,\{\cQ(\phi)\, |\, \phi\in\Phi\})$ consisting of 
\begin{enumerate}[(i)]
\item a linearly ordered set $\Phi$; and,
\item for each $\phi\in\Phi$, a full subcategory $\cQ(\phi)\subseteq\sT$ which is closed under direct summands and extensions;
\end{enumerate}
is called a \emph{(generalised) co-slicing} of $\sT$ if it satisfies the following conditions:
\begin{enumerate}
\item There is an automorphism of ordered sets $\lambda:\Phi\rightarrow\Phi$ such that $\lambda(\phi)\geq\phi$ and $\Sigma\cQ(\phi)=\cQ(\lambda(\phi))$ for all $\phi\in\Phi$;
\item If $\phi_{1}<\phi_{2}$ then $\Hom{\sT}{\cQ(\phi_{1})}{\cQ(\phi_{2})}=0$;
\item Each object $0\neq t\in\sT$ sits in a tower
$$\xymatrix{0\cong t_{0}\ar[r] & t_{1}\ar[r]\ar[d] & t_{2}\ar[r]\ar[d] & \cdots\ar[r] & t_{n-1}\ar[r]\ar[d] & t_{n}\cong t\ar[d] \\
 & q_{1}\ar@{~>}[ul] & q_{2}\ar@{~>}[ul] & & q_{n-1}\ar@{~>}[ul] & q_{n}\ar@{~>}[ul] }$$
with $0\neq q_{i}\in\cQ(\phi_{i})$ and $\phi_{1}<\phi_{2}<\cdots < \phi_{n}$. 
\end{enumerate}
\end{definition}

We shall often refer to the tower of triangles in Definition \ref{def:co-slicing}$(3)$ as a \emph{Harder-Narasimhan (HN) filtration} of $t$. The full subcategories $\cQ(\phi)$ for $\phi\in\Phi$ will be called the \emph{co-slices} of the co-slicing. Following the terminology of \cite{Bridgeland} and \cite{GKR}, the objects of $\cQ(\phi)$ will be called \emph{semistable of phase $\phi$}.

We have the following canonical examples, which can be obtained from naturally occurring co-t-structures.

\begin{example} \label{ex:stable}
Suppose $(\sA,\sB)$ is a co-t-structure on a triangulated category $\sT$ satisfying $\Sigma\sA=\sA$ (and hence $\Sigma\sB =\sB$), i.e. it is a \emph{stable co-t-structure} (= stable t-structure). Define a co-slicing $(\Phi, \{\cQ(\phi)\, |\, \phi\in\Phi\})$ by
\begin{itemize}
\item $\Phi = \{0,1\}$ with $0< 1$;
\item $\cQ(0)=\sA$ and $\cQ(1)=\sB$;
\item $\lambda(0)=0$ and $\lambda(1)=1$.
\end{itemize}
\end{example}

\begin{example} \label{ex:bounded}
Suppose $(\sA,\sB)$ is a bounded co-t-structure on a triangulated category $\sT$. We can define a co-slicing $(\Phi, \{\cQ(\phi)\, |\, \phi\in\Phi\})$ on $\sT$ by
\begin{itemize}
\item $\Phi=\bZ$ with the standard linear ordering;
\item $\lambda(\phi)=\phi +1$ for each $\phi\in\bZ$;
\item $\cQ(\phi)=\Sigma^{\phi}\sC$, where $\sC=\sA\cap\Sigma^{-1}\sB$ is the co-heart of $(\sA,\sB)$.
\end{itemize}
\end{example}

\begin{example} \label{ex:co-slice}
Suppose $\cQ$ is a co-slicing of $\sT$ in the sense of \cite[Definition 3.1]{Co-stability}. Then the pair $(\cQ(\phi),\bR)$ is a generalised co-slicing with $\lambda:\bR\rightarrow\bR$ given by $\phi\mapsto\phi+1$.
\end{example}

This last example justifies the terminology `generalised co-slicing'. In what follows we will often omit the adjective `generalised' so that co-slicing refers to the concept defined in Definition \ref{def:co-slicing} rather than that defined in \cite[Definition 3.1]{Co-stability}. In a similar vein, the stability data of \cite{GKR} will be referred to as (generalised) slicings.

\begin{remark}
Example \ref{ex:bounded} is essentially \cite[Proposition 1.5.6]{Bondarko}. Note that in \cite[Proposition 5.3]{Bridgeland}, one has a bijective correspondence between bounded t-structures in a triangulated category $\sT$ with a stability function satisfying the HN property on its heart and stability conditions on $\sT$. However, the example of a stable (co)-t-structure shows that one does not need a bounded (co)-$t$-structure to obtain a (generalised) (co)-slicing.
\end{remark}

Before we leave the basic definitions, let us first recall the notion of silting. The terminology is originally due to \cite{KV}, however, the usage of the terminology in this article is drawn from \cite{AI}.

\begin{definitions} \label{def:silting}
Suppose $\sS$ is a subcategory of a triangulated category $\sT$.
\begin{enumerate}[leftmargin=*] 
\item The subcategory $\sS$ is called a \emph{partial silting subcategory} if $\Hom{\sT}{s}{\Sigma^{i}s'}=0$ for all $s,s'\in\sS$ and for all $i>0$. 
\item The subcategory $\sS$ is called a \emph{silting subcategory} if it is a partial silting subcategory and the smallest thick subcategory of $\sT$ containing $\sS$ is $\sT$, i.e. $\thick{\sT}{\sS}=\sT$.
\end{enumerate}
\end{definitions}


\section{Basic properties of co-slicings}\label{sec:basic}

In this section we collect some formal properties of co-slicings. Besides Proposition~\ref{prop:silting}, all the properties are analogous to the formal properties of slicings in \cite{GKR}. Thus we give very little detail in the proofs and include this section for the sake of completeness.

\begin{proposition} \label{prop:silting}
Suppose $(\Phi,\{\cQ(\phi)\,|\, \phi\in\Phi\})$ is a co-slicing of a triangulated category $\sT$. Then a co-slice $\cQ(\phi)$ for $\phi\in\Phi$ is either stable under suspensions (and desuspensions) or is partial silting.

In particular, if a co-slice $\cQ(\phi)$ of a co-slicing $(\Phi,\{\cQ(\phi)\,|\, \phi\in\Phi\})$ of $\sT$ is not stable under suspensions, then the extensions in $\cQ(\phi)$ are split.
\end{proposition}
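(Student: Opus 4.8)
The plan is to split into cases according to whether the order automorphism $\lambda$ of axiom (1) of Definition~\ref{def:co-slicing} fixes $\phi$ or strictly increases it; since $\lambda(\phi)\geq\phi$, exactly one of $\lambda(\phi)=\phi$ and $\lambda(\phi)>\phi$ holds. In the first case axiom (1) gives $\Sigma\cQ(\phi)=\cQ(\lambda(\phi))=\cQ(\phi)$, so $\cQ(\phi)$ is stable under suspensions; and since $\lambda$ is an order automorphism $\lambda^{-1}(\phi)=\phi$ as well (equivalently, apply $\Sigma^{-1}$), so $\cQ(\phi)$ is also stable under desuspensions. In the second case I would iterate $\lambda$: as $\lambda$ preserves the order, $\phi<\lambda(\phi)<\lambda^{2}(\phi)<\cdots$, hence $\lambda^{i}(\phi)>\phi$ for every $i>0$. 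Then for any $q,q'\in\cQ(\phi)$ and any $i>0$ we have $\Sigma^{i}q'\in\Sigma^{i}\cQ(\phi)=\cQ(\lambda^{i}(\phi))$, so axiom (2) applied to the strict inequality $\phi<\lambda^{i}(\phi)$ forces $\Hom{\sT}{q}{\Sigma^{i}q'}=0$; that is, $\cQ(\phi)$ is partial silting. Consequently a co-slice that is not stable under suspensions must have $\lambda(\phi)\neq\phi$, hence $\lambda(\phi)>\phi$, hence is partial silting.

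For the concluding statement, assume $\cQ(\phi)$ is not stable under suspensions, so $\lambda(\phi)>\phi$ by the above. Given a distinguished triangle $\tri{q'}{q}{q''}$ with $q',q''\in\cQ(\phi)$ (whence $q\in\cQ(\phi)$, co-slices being extension closed), its connecting morphism lies in $\Hom{\sT}{q''}{\Sigma q'}$, which vanishes because $q''\in\cQ(\phi)$, $\Sigma q'\in\Sigma\cQ(\phi)=\cQ(\lambda(\phi))$ and $\phi<\lambda(\phi)$, by axiom (2). A distinguished triangle whose connecting morphism is zero splits, so $q\cong q'\oplus q''$; that is, the extensions in $\cQ(\phi)$ are split.

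I do not anticipate a genuine obstacle here — the argument is short — the only points requiring a little care being that the dichotomy on $\lambda(\phi)$ is exhaustive (this is exactly the content of axiom (1)) and that the two outcomes are allowed to overlap in the degenerate case $\cQ(\phi)=0$, so the statement is to be read as an inclusive ``either/or''.
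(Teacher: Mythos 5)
Your proposal is correct and is essentially the paper's own argument: both rest on the dichotomy $\lambda(\phi)=\phi$ versus $\lambda(\phi)>\phi$ together with axioms (1) and (2) and the identity $\Sigma^{i}\cQ(\phi)=\cQ(\lambda^{i}(\phi))$; the paper merely runs it in the contrapositive direction (a non-vanishing $\Hom{\sT}{q}{\Sigma^{n}q'}$ forces the chain $\phi\geq\lambda^{n}(\phi)\geq\cdots\geq\lambda(\phi)\geq\phi$, hence $\lambda(\phi)=\phi$). Your explicit verification that the connecting morphism of an extension vanishes is exactly the step the paper leaves as ``now clear.''
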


\begin{proof}
Consider a co-slice $\cQ(\phi)$ of $(\Phi,\{\cQ(\phi)\,|\, \phi\in\Phi\})$. If $\Hom{\sT}{q}{\Sigma^{n}q'}=0$ for all $q,q'\in\cQ(\phi)$ and all $n>0$, then $\cQ(\phi)$ is partial silting. So suppose $\Hom{\sT}{q}{\Sigma^{n}q'}\neq 0$ for some $q,q'\in\cQ(\phi)$ and $n>0$. Then Definition \ref{def:co-slicing}$(2)$ implies that $\phi\geq\lambda^{n}(\phi)$. Definition \ref{def:co-slicing}$(1)$ gives a chain of inequalities:
$$\phi \geq \lambda^{n}(\phi) \geq \lambda^{n-1}(\phi) \geq \cdots \geq \lambda(\phi) \geq \phi.$$
Hence $\lambda(\phi)=\phi$ and $\Sigma\cQ(\phi)=\cQ(\phi)$. The second statement is now clear.
\end{proof}

The next proposition indicates how to combine a bounded co-t-structure with split-stability data on its co-heart.

\begin{proposition} \label{prop:bounded_co-slicing}
Let $\sT$ be a triangulated category with a bounded co-t-structure $(\sA,\sB)$. Suppose we are given split-stability data $(\Phi, \{\cR(\phi)\, |\, \phi\in\Phi\})$ on the co-heart $\sC$. We can define a co-slicing of $\sT$ by $(\Psi, \{\cQ(\psi)\, |\, \psi\in\Psi\})$ by
\begin{itemize}
\item $\Psi =\bZ\times\Phi$, with lexicographical ordering on $\Psi$;
\item $\cQ((n,\phi))=\Sigma^{n}\cR(\phi)$ for each $(n,\phi)\in\Psi$.
\end{itemize}
\end{proposition}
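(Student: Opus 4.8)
The goal is to verify the three axioms of Definition~\ref{def:co-slicing} for the pair $(\Psi, \{\cQ(\psi)\, |\, \psi\in\Psi\})$. Axiom (1) is immediate: define $\lambda:\Psi\to\Psi$ by $\lambda((n,\phi)) = (n+1,\phi)$. This is clearly an automorphism of the lexicographically ordered set $\bZ\times\Phi$, it satisfies $\lambda((n,\phi)) = (n+1,\phi) > (n,\phi)$, and $\Sigma\cQ((n,\phi)) = \Sigma\Sigma^n\cR(\phi) = \Sigma^{n+1}\cR(\phi) = \cQ((n+1,\phi)) = \cQ(\lambda((n,\phi)))$. One also checks that each $\cQ((n,\phi)) = \Sigma^n\cR(\phi)$ is closed under direct summands (inherited from $\cR(\phi)$) and under extensions: since $\cR(\phi)\subseteq\sC$ is closed under direct summands and direct sums, and $\sC$ is the co-heart of a co-t-structure, extensions in $\sC$ split (the co-heart has vanishing self-extensions in all nonzero degrees, so in particular $\Ext{1}{}$ vanishes), hence an extension of two objects of $\cR(\phi)$ is a direct sum of them and lies in $\cR(\phi)$.

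**Axiom (2).** Suppose $\psi_1 = (m,\phi_1) < \psi_2 = (n,\phi_2)$ in the lexicographic order; we must show $\Hom{\sT}{\Sigma^m\cR(\phi_1)}{\Sigma^n\cR(\phi_2)} = 0$, equivalently $\Hom{\sT}{\cR(\phi_1)}{\Sigma^{n-m}\cR(\phi_2)} = 0$. There are two cases. If $m < n$, then $n-m\geq 1$, and since $\cR(\phi_1),\cR(\phi_2)\subseteq\sC$ with $\sC$ the co-heart, the relevant Hom group vanishes because the co-heart is a partial silting subcategory — more precisely, by the standard properties of co-t-structures $\Hom{\sT}{\sC}{\Sigma^i\sC} = 0$ for all $i\geq 1$ (as $\sC\subseteq\sA$ and $\Sigma^i\sC\subseteq\Sigma^i\sB\cdot\Sigma^{i-1}\subseteq\sB$ for $i\geq 1$, using $\Sigma\sB\subseteq\sB$, together with the orthogonality axiom (2) of Definition~\ref{def:co-t-structure}). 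If $m = n$, then lexicographic order forces $\phi_1 < \phi_2$ in $\Phi$, and the vanishing $\Hom{\sT}{\cR(\phi_1)}{\cR(\phi_2)} = 0$ is precisely condition (1) of the split-stability data (Definition~\ref{def:split-stability}), noting that Hom groups computed in $\sA$ or in $\sT$ agree for objects of $\sC\subseteq\sA$.

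**Axiom (3).** This is where the real work is, and I expect it to be the main obstacle. Given $0\neq t\in\sT$, I would first use boundedness of the co-t-structure $(\sA,\sB)$ together with Example~\ref{ex:bounded}: the bounded co-t-structure induces a co-slicing $(\bZ,\{\Sigma^n\sC\})$, so $t$ already sits in a finite tower of triangles with subquotients $c_i\in\Sigma^{n_i}\sC$ and strictly decreasing (hence distinct) integers $n_1 > n_2 > \cdots > n_k$ — I will need to transpose to the convention of increasing phases, but this is cosmetic. (Concretely, this is the weight decomposition of an object with respect to a bounded weight structure.) Then, for each nonzero subquotient $c_i$, I apply the split Harder-Narasimhan filtration of Definition~\ref{def:split-stability}(2) inside $\sC$: writing $c_i \cong \Sigma^{n_i}(r_{i,1}\oplus\cdots\oplus r_{i,\ell_i})$ with $r_{i,j}\in\cR(\phi_{i,j})$ and $\phi_{i,1} < \cdots < \phi_{i,\ell_i}$, each $c_i$ is itself built (now by split triangles, since it is a direct sum) from pieces in the $\cQ((n_i,\phi_{i,j})) = \Sigma^{n_i}\cR(\phi_{i,j})$. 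The final step is to splice these two layers of filtration into a single tower: refining each triangle $t_{i-1}\to t_i\to c_i\to\Sigma t_{i-1}$ by the octahedral axiom, using the split filtration of $c_i$ to subdivide that stage into $\ell_i$ stages. The phases appearing, read off in order, are $(n_k,\phi_{k,1}), \ldots, (n_k,\phi_{k,\ell_k}), (n_{k-1},\phi_{k-1,1}),\ldots$, which are strictly increasing in the lexicographic order because within a fixed first coordinate the $\phi$'s increase, and the first coordinates $n_k < n_{k-1} < \cdots < n_1$ increase from one block to the next. The bookkeeping in the splicing step — repeatedly invoking the octahedral axiom to insert the split filtration of $c_i$ between $t_{i-1}$ and $t_i$ — is the delicate part; everything else is formal. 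I would also remark (or cite the analogous statement from \cite{GKR}) that the resulting HN filtration is unique, though uniqueness is not required for the statement as phrased.
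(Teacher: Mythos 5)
Your proposal is correct and follows essentially the same route as the paper: the paper also treats axioms (1) and (2) as routine and concentrates on axiom (3), where it likewise takes the weight tower coming from the bounded co-t-structure (Example~\ref{ex:bounded}), refines each subquotient $c_j$ into a tower of split triangles via the split HN decomposition on the co-heart, and then glues the two layers of towers (citing \cite[Proposition 4.3]{GKR} for the splicing, which is exactly the octahedral-axiom bookkeeping you describe).
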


\begin{proof}
The proof is straightforward and analogous to that in \cite[Proposition 3.4]{GKR}: one simply checks the axioms in turn. The only not-so-obvious part is the existence of Harder-Narasimhan filtrations, so we give a sketch: Example \ref{ex:bounded} means that given $0\neq t\in\sT$, we can write down a tower
\begin{equation}
\xymatrix{0\cong t_{0}\ar[r] & t_{1}\ar[r]\ar[d] & t_{2}\ar[r]\ar[d] & \cdots\ar[r] & t_{n-1}\ar[r]\ar[d] & t_{n}\cong t\ar[d] \\
  & c_{1}\ar@{~>}[ul] & c_{2}\ar@{~>}[ul] & & c_{n-1}\ar@{~>}[ul] & c_{n}\ar@{~>}[ul] &   }
\label{eq:tower1}
\end{equation}
with $c_{j}\in\Sigma^{k_{j}}\sC$ and $k_{1}<k_{2}<\cdots <k_{n}$ (in $\bZ$). Since $(\Phi, \{\cR(\phi)\, |\, \phi\in\Phi\})$ is split-stability data on the co-heart $\sC$, the objects $\Sigma^{-k_{j}}c_{j}$ have split HN filtrations $$\Sigma^{-k_{j}}c_{j}\cong \Sigma^{-k_{j}}(r_{j}^{1}\oplus r_{j}^{2}\oplus\cdots\oplus r_{j}^{m_{j}})$$ with $\Sigma^{-k_{j}}r_{j}^{i}\in\cR(\phi_{j}^{i})$ and $\phi_{j}^{1}<\phi_{j}^{2}<\cdots<\phi_{j}^{m_{j}}$. Such a direct sum decomposition gives rise to a tower of split triangles:
\begin{equation}
\xymatrix{0\cong c_{j}^{0}\ar[r] & c_{j}^{1}\ar[r]\ar[d] & c_{j}^{2}\ar[r]\ar[d] & \cdots\ar[r] & c_{j}^{m_{j}-1}\ar[r]\ar[d] & c_{j}^{m_{j}}\cong c_{j}\ar[d] \\
 & r_{j}^{1}\ar@{~>}[ul]_-{0} & r_{j}^{2}\ar@{~>}[ul]_-{0} & & r_{j}^{m_{j}-1}\ar@{~>}[ul]_-{0} & r_{j}^{m_{j}}\ar@{~>}[ul]_-{0} &  }
\label{eq:split-tower}
\end{equation}
with $r_{j}^{i}\in\Sigma^{k_{j}}\cR(\phi_{j}^{i})=\cQ((k_{j},\phi_{j}^{i}))$ and $\phi_{j}^{1}<\phi_{j}^{2}<\cdots<\phi_{j}^{m_{j}}$. One can then glue together towers \eqref{eq:tower1} and \eqref{eq:split-tower} using \cite[Proposition 4.3]{GKR}.
\end{proof}

\begin{corollary}
Split-stability data $(\Phi, \{\cQ(\phi)\, |\, \phi\in\Phi\})$ on an additive category $\sA$ induces a co-slicing $(\bZ\times\Phi, \{\Sigma^{n}\cQ(\phi)\, |\, (n,\phi)\in\bZ\times\Phi\})$ on the bounded homotopy category $\sK^{b}(\sA)$ of $\sA$.
\end{corollary}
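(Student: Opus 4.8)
The plan is to deduce the corollary directly from Proposition~\ref{prop:bounded_co-slicing}, once a suitable bounded co-t-structure on $\sK^{b}(\sA)$ is in place; the only real work is transferring the split-stability data from $\sA$ to the co-heart of that co-t-structure.

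\emph{Step 1: the standard co-t-structure on $\sK^{b}(\sA)$.} First I would record that, viewing $\sA$ as the stalk complexes concentrated in degree $0$, it is a partial silting subcategory of $\sK^{b}(\sA)$ with $\thick{\sK^{b}(\sA)}{\sA}=\sK^{b}(\sA)$; such a subcategory, after closure under direct summands inside $\sK^{b}(\sA)$, is the co-heart of a bounded co-t-structure $(\sA^{\leq 0},\sA^{\geq 0})$ on $\sK^{b}(\sA)$ (see \cite{Bondarko}; concretely $\sA^{\leq 0}$, respectively $\sA^{\geq 0}$, consists of the complexes homotopy equivalent to one with zero terms in positive, respectively negative, degrees). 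Write $\sC$ for this co-heart, so $\sC$ is the closure of $\sA$ under direct summands in $\sK^{b}(\sA)$, and $\sC=\sA$ when $\sA$ is idempotent complete.

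\emph{Step 2: transferring the split-stability data.} Given the split-stability data $(\Phi,\{\cQ(\phi)\,|\,\phi\in\Phi\})$ on $\sA$, I would set $\cR(\phi)\subseteq\sC$ to be the full subcategory of direct summands in $\sK^{b}(\sA)$ of objects of $\cQ(\phi)$. Closure under direct summands and direct sums, and the Hom-vanishing of Definition~\ref{def:split-stability}(1), are routine, since a morphism between two such summands is cut out, via the evident inclusions and projections, from a morphism between objects of $\cQ(\phi_{1})$ and $\cQ(\phi_{2})$. The point requiring care is the split Harder-Narasimhan property: given $0\neq c\in\sC$, choose $a\in\sA$ of which $c$ is a direct summand and a split HN filtration $a\cong a_{1}\oplus\cdots\oplus a_{n}$ with $a_{i}\in\cQ(\phi_{i})$ and $\phi_{1}<\cdots<\phi_{n}$. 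By Definition~\ref{def:split-stability}(1) the idempotent $e\in\Endo{a}$ with image $c$ is triangular with respect to this decomposition, its component $a_{i}\to a_{j}$ vanishing whenever $\phi_{i}<\phi_{j}$; hence $e$ is conjugate, by a unipotent triangular automorphism of $a$, to its diagonal part $\bigoplus_{i}e_{ii}$ with each $e_{ii}$ an idempotent on $a_{i}$. Consequently $c\cong\bigoplus_{i}\Image{e_{ii}}$ with $\Image{e_{ii}}\in\cR(\phi_{i})$, and discarding the zero terms yields a split HN filtration of $c$.

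\emph{Step 3: conclusion.} Finally, Proposition~\ref{prop:bounded_co-slicing} applied to the bounded co-t-structure $(\sA^{\leq 0},\sA^{\geq 0})$ and the split-stability data $(\Phi,\{\cR(\phi)\,|\,\phi\in\Phi\})$ on its co-heart $\sC$ produces a co-slicing of $\sK^{b}(\sA)$, indexed lexicographically by $\bZ\times\Phi$, with co-slices $\Sigma^{n}\cR(\phi)$ and $\lambda(n,\phi)=(n+1,\phi)$; since $\cR(\phi)$ is the summand-closure of $\cQ(\phi)$ (and equals it when $\sA$ is idempotent complete), these are precisely the co-slices $\Sigma^{n}\cQ(\phi)$ in the statement. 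I expect the main obstacle to be Step 2, and within it the descent of the split HN property to the summand-closure $\sC$ via the triangular-idempotent argument; the rest is either standard structure theory of $\sK^{b}(\sA)$ or an immediate appeal to Proposition~\ref{prop:bounded_co-slicing}.
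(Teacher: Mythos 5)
Your argument is correct and is exactly the route the paper intends: the corollary is stated without proof as an immediate consequence of Proposition~\ref{prop:bounded_co-slicing} applied to the standard bounded co-t-structure on $\sK^{b}(\sA)$ whose co-heart is (the summand closure of) $\sA$. Your Step~2, descending the split HN property to summands via the triangular-idempotent conjugation, is a worthwhile extra precaution that the paper glosses over by tacitly identifying $\sA$ with the co-heart.
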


\begin{lemma} \label{lem:family}
Suppose $(\Phi, \{\cQ(\phi)\, |\, \phi\in\Phi\})$ is a co-slicing of a triangulated category $\sT$ and that the linearly ordered set $\Phi$ can be written as a disjoint union $\Phi=\Phi_{-}\cup\Phi_{+}$ with the property that $$\phi_{-}\in\Phi_{-} \mbox{ and } \phi_{+}\in\Phi_{+} \implies \phi_{-} < \phi_{+}.$$ Then the following pair of subcategories,
$$\sA = (\bigcup_{\phi\in\Phi_{-}} \cQ(\phi))^{+} \quad {\rm and } \quad \sB  =  (\bigcup_{\phi\in\Phi_{+}} \cQ(\phi))^{+},$$
where $(-)^{+}$ indicates closure under extensions and direct summands, defines a co-t-structure in $\sT$.
\end{lemma}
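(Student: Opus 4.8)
The plan is to verify, one condition at a time, the three requirements of Definition~\ref{def:co-t-structure} for the pair $(\sA,\sB)$. Closure of $\sA$ and $\sB$ under direct summands is immediate from the definition of $(-)^{+}$, and closure under (finite) direct sums follows from closure under extensions applied to the split triangles $\tri{x}{x\oplus y}{y}$, so there is nothing further to check there.

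For condition~(1) the key observation is that the order-automorphism $\lambda$ of Definition~\ref{def:co-slicing}(1) preserves the decomposition $\Phi=\Phi_{-}\cup\Phi_{+}$. Since $\lambda(\phi)\geq\phi$ for every $\phi$, one also has $\lambda^{-1}(\phi)\leq\phi$; hence a phase in $\Phi_{-}$ cannot be sent by $\lambda^{-1}$ into $\Phi_{+}$ (that would force it to be strictly smaller than something strictly larger than it), and a phase in $\Phi_{+}$ cannot be sent by $\lambda$ into $\Phi_{-}$. Therefore $\Sigma^{-1}\cQ(\phi)=\cQ(\lambda^{-1}(\phi))\subseteq\bigcup_{\psi\in\Phi_{-}}\cQ(\psi)$ for $\phi\in\Phi_{-}$, and $\Sigma\cQ(\phi)=\cQ(\lambda(\phi))\subseteq\bigcup_{\psi\in\Phi_{+}}\cQ(\psi)$ for $\phi\in\Phi_{+}$; as $\Sigma^{\pm 1}$ is a triangulated automorphism it commutes with the closure $(-)^{+}$, giving $\Sigma^{-1}\sA\subseteq\sA$ and $\Sigma\sB\subseteq\sB$. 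For condition~(2), Definition~\ref{def:co-slicing}(2) together with the separation hypothesis $\Phi_{-}<\Phi_{+}$ gives $\Hom{\sT}{\cQ(\phi_{-})}{\cQ(\phi_{+})}=0$ for all $\phi_{-}\in\Phi_{-}$, $\phi_{+}\in\Phi_{+}$; one then bootstraps this through $(-)^{+}$ in the usual way, using that for fixed $b$ the class $\{x\mid\Hom{\sT}{x}{b}=0\}$ is closed under extensions (long exact sequence of the homological functor $\Hom{\sT}{-}{b}$) and under direct summands, and symmetrically for $\{y\mid\Hom{\sT}{a}{y}=0\}$ with $a$ fixed.

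Condition~(3) is the heart of the matter and the step where I expect the real work to lie. Given $0\neq t\in\sT$, take an HN filtration as in Definition~\ref{def:co-slicing}(3), with layers $q_{i}\in\cQ(\phi_{i})$ and $\phi_{1}<\cdots<\phi_{n}$. By the separation hypothesis the indices $i$ with $\phi_{i}\in\Phi_{-}$ form an initial segment $\{1,\dots,k\}$ for some $0\leq k\leq n$. Put $a:=t_{k}$: the first $k$ triangles $\tri{t_{i-1}}{t_{i}}{q_{i}}$ of the tower, together with $t_{0}=0$, exhibit $t_{k}$ as an iterated extension of $q_{1},\dots,q_{k}\in\bigcup_{\phi\in\Phi_{-}}\cQ(\phi)$, so $a\in\sA$. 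Let $b$ be the cone of the composite $a=t_{k}\to t_{n}=t$. A routine octahedral argument (equivalently, the truncation/glueing formalism of \cite[Proposition 4.3]{GKR}) shows that $b$ carries a tower whose layers are exactly $q_{k+1},\dots,q_{n}\in\bigcup_{\phi\in\Phi_{+}}\cQ(\phi)$, whence $b\in\sB$; the resulting triangle $\tri{a}{t}{b}$ is the one required, and for $t=0$ one takes the zero triangle. The only point demanding a little care is the bookkeeping: checking that cutting the HN tower at the threshold $k$ genuinely produces two triangles with ends in $\sA$ and $\sB$, and that the octahedral manipulations building $b$ keep its layers inside their co-slices. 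Since the phases on the two sides of $k$ are order-separated by hypothesis no subtlety arises, and this part of the argument is just the mirror of the corresponding statement for slicings in \cite{GKR}.
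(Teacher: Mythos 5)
Your proposal is correct and follows the same route as the paper's (much terser) proof: closure properties from the definition of $(-)^{+}$, condition (1) from $\lambda^{-1}(\phi)\leq\phi$ preserving the partition, condition (2) from Definition~\ref{def:co-slicing}(2) bootstrapped through extensions and summands, and condition (3) by cutting the HN tower at the threshold between $\Phi_{-}$ and $\Phi_{+}$. The details you supply (in particular the octahedral bookkeeping for the cone $b$) are exactly the ones the paper leaves implicit.
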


\begin{proof}
The subcategories $\sA$ and $\sB$ are closed under direct sums and direct summands by definition. The inclusion $\Sigma^{-1}\sA\subseteq\sA$ follows from the fact $\Sigma^{-1}\cQ(\phi)=\cQ(\lambda^{-1}(\phi))$ and $\lambda^{-1}(\phi)\leq\phi$. Similarly for $\Sigma\sB\subseteq\sB$. The orthogonality $\Hom{\sT}{\sA}{\sB}=0$ follows from Definition \ref{def:co-slicing}(2), and the approximation triangle comes from the tower in Definition \ref{def:co-slicing}(3).
\end{proof}

This lemma, and the corollary below, correspond to Bridgeland's observation in \cite{Bridgeland} that a slicing induces a family of `compatible' t-structures. With this in mind, we make the following definition.

\begin{definition} \label{def:induced}
We say that the co-t-structure $(\sA,\sB)$ obtained in Lemma \ref{lem:family} is \emph{induced} by $(\Phi,\{\cQ(\phi)\,|\,\phi\in\Phi\})$.
\end{definition}

We have the following specialisation of Lemma \ref{lem:family}.

\begin{corollary} \label{cor:family}
Suppose $(\Phi, \{\cQ(\phi)\, |\, \phi\in\Phi\})$ is a co-slicing of a triangulated category $\sT$. Then each $\phi_{0}\in\Phi$ determines a co-t-structure defined by the subcategories:
$$\sA  =  (\bigcup_{\phi<\phi_{0}} \cQ(\phi))^{+} \quad {\rm and } \quad \sB  = (\bigcup_{\phi\geq\phi_{0}} \cQ(\phi))^{+}.$$
\end{corollary}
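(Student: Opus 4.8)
The plan is to derive this as a direct application of Lemma~\ref{lem:family}, so the proof will be extremely short. Given $\phi_0\in\Phi$, I would define $\Phi_- = \{\phi\in\Phi \mid \phi<\phi_0\}$ and $\Phi_+ = \{\phi\in\Phi \mid \phi\geq\phi_0\}$. First I would observe that this is a genuine disjoint union with $\Phi = \Phi_-\cup\Phi_+$, and that it satisfies the key separation hypothesis of Lemma~\ref{lem:family}: if $\phi_-\in\Phi_-$ and $\phi_+\in\Phi_+$ then $\phi_- < \phi_0 \leq \phi_+$, so $\phi_- < \phi_+$ by transitivity of the linear order on $\Phi$. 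Then the conclusion is immediate: Lemma~\ref{lem:family} produces a co-t-structure with $\sA = (\bigcup_{\phi\in\Phi_-}\cQ(\phi))^+ = (\bigcup_{\phi<\phi_0}\cQ(\phi))^+$ and $\sB = (\bigcup_{\phi\in\Phi_+}\cQ(\phi))^+ = (\bigcup_{\phi\geq\phi_0}\cQ(\phi))^+$, which is exactly the asserted pair.

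There is essentially no obstacle here; the only thing to be mildly careful about is the edge cases. If $\phi_0$ is the minimum of $\Phi$ (or $\Phi$ has no elements below it), then $\Phi_-$ is empty and $\sA$ is the closure of the empty union, i.e. the zero subcategory; similarly if $\phi_0$ exceeds every element of $\Phi$ then $\sB = 0$. In either case one still gets a (possibly trivial) co-t-structure, since Lemma~\ref{lem:family} imposes no nonemptiness on $\Phi_-$ or $\Phi_+$ — the orthogonality condition and the approximation triangle both hold vacuously or trivially when one side is zero. So no separate argument is needed.

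Concretely, I would write: ``Apply Lemma~\ref{lem:family} with $\Phi_- = \{\phi\in\Phi\mid\phi<\phi_0\}$ and $\Phi_+ = \{\phi\in\Phi\mid\phi\geq\phi_0\}$. This is a disjoint union decomposition of $\Phi$, and since $\Phi$ is linearly ordered, $\phi_-<\phi_0\leq\phi_+$ forces $\phi_-<\phi_+$ whenever $\phi_-\in\Phi_-$ and $\phi_+\in\Phi_+$. The co-t-structure produced by Lemma~\ref{lem:family} is then precisely the pair $(\sA,\sB)$ in the statement.'' That is the whole proof. The point of stating it as a separate corollary is simply that the ``cut at a single phase'' construction is the one that gets used repeatedly in the sequel (it is the exact analogue of how a Bridgeland slicing yields a t-structure for each real number), so it is convenient to have it recorded with this notation.
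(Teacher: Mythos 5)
Your proof is correct and is exactly the argument the paper intends: the corollary is stated as a direct specialisation of Lemma~\ref{lem:family} via the decomposition $\Phi_{-}=\{\phi<\phi_{0}\}$, $\Phi_{+}=\{\phi\geq\phi_{0}\}$, and the paper omits the (immediate) verification you spell out. Your remark about the degenerate cases where $\Phi_{-}$ or $\Phi_{+}$ is empty is a reasonable extra check and does not change the argument.
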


The following easy lemma is the analogue of the remark after \cite[Corollary 5.2]{GKR}. In \cite{GKR}, no proof is given, so we give a proof for the convenience of the reader.

\begin{lemma} \label{lem:bounded}
Let $(\Phi, \{\cQ(\phi)\, |\, \phi\in\Phi\})$ be a co-slicing of $\sT$ satisfying the hypotheses of Lemma \ref{lem:family}, and suppose that $(\sA,\sB)$ is the co-t-structure induced by the decomposition $\Phi=\Phi_{-}\cup\Phi_{+}$. Then:
\begin{enumerate}[label=(\roman*),leftmargin=*]
\item $(\sA,\sB)$ is bounded below if and only if $\Phi=\bigcup_{n\geq 0}\lambda^{n}(\Phi_{-})$.
\item $(\sA,\sB)$ is bounded above if and only if $\Phi=\bigcup_{n\geq 0}\lambda^{-n}(\Phi_{+})$.
\item $(\sA,\sB)$ is bounded if and only if $\Phi=\bigcup_{n\geq 0}\lambda^{n}(\Phi_{-})=\bigcup_{n\geq 0}\lambda^{-n}(\Phi_{+})$.
\end{enumerate}
\end{lemma}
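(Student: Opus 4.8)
The plan is to prove the three equivalences in parallel, since (iii) is just the conjunction of (i) and (ii), and (ii) is the ``mirror image'' of (i) obtained by passing to $\sT^{\op}$ (which swaps $\sA\leftrightarrow\sB$, $\Sigma\leftrightarrow\Sigma^{-1}$, $\lambda\leftrightarrow\lambda^{-1}$ and reverses the order on $\Phi$). So the real work is (i), and then (ii) follows formally and (iii) is immediate. Throughout I would use that $\Sigma^{n}\cQ(\phi)=\cQ(\lambda^{n}(\phi))$ for all $n\in\bZ$, so $\Sigma^{n}\sA=(\bigcup_{\phi\in\lambda^{n}(\Phi_{-})}\cQ(\phi))^{+}$, with the analogous statement for $\sB$.

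For the ``if'' direction of (i): assume $\Phi=\bigcup_{n\geq 0}\lambda^{n}(\Phi_{-})$; I must show $\sT=\bigcup_{i\in\bZ}\Sigma^{i}\sA$. Take $0\neq t\in\sT$ and apply Definition~\ref{def:co-slicing}(3) to get an HN filtration with semistable factors $q_{1},\ldots,q_{n}$ of phases $\phi_{1}<\cdots<\phi_{n}$. By hypothesis each $\phi_{j}$ lies in $\lambda^{m_{j}}(\Phi_{-})$ for some $m_{j}\geq 0$; put $m:=\max_{j} m_{j}$. Then $\lambda^{-m}(\phi_{j})\in\lambda^{m_{j}-m}(\Phi_{-})\subseteq\Phi_{-}$ for every $j$, using $\lambda^{-1}(\Phi_{-})\subseteq\Phi_{-}$ (which holds because $\Phi_{-}$ is a down-set: if $\phi\in\Phi_{-}$ then $\lambda^{-1}(\phi)\leq\phi$, forcing $\lambda^{-1}(\phi)\in\Phi_{-}$). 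Hence each $q_{j}\in\cQ(\phi_{j})=\Sigma^{m}\cQ(\lambda^{-m}(\phi_{j}))\subseteq\Sigma^{m}\sA$, i.e. $\Sigma^{-m}q_{j}\in\sA$; desuspending the whole tower by $m$ realises $\Sigma^{-m}t$ as a finite iterated extension of objects of $\sA$, and since $\sA$ is extension closed we get $\Sigma^{-m}t\in\sA$, so $t\in\Sigma^{m}\sA$. Conversely (``only if''): assume $(\sA,\sB)$ is bounded below. Fix $\phi\in\Phi$ and pick $0\neq q\in\cQ(\phi)$ (such $q$ exists since co-slices are nonempty, being closed under the extension $0\to q\to q$ only if nonempty — here one should note that in the HN filtration all factors are required nonzero, so co-slices used by some object are nonempty; if $\cQ(\phi)=0$ the claim $\phi\in\bigcup_n\lambda^n(\Phi_-)$ is vacuous in the sense that it does not affect the characterisation, but to be safe I would phrase everything in terms of phases that actually occur, or simply observe $0\in\cQ(\phi)$ always so one can take the empty HN filtration — cleaner: just treat nonzero $q$). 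By boundedness below, $q\in\Sigma^{m}\sA$ for some $m\geq 0$ (we may take $m\geq 0$ by enlarging); then $\Sigma^{-m}q\in\sA=(\bigcup_{\psi\in\Phi_{-}}\cQ(\psi))^{+}$, and running the HN filtration of the nonzero object $\Sigma^{-m}q$ inside this extension-closed subcategory shows its unique HN factor has phase in $\Phi_{-}$; but $\Sigma^{-m}q\in\cQ(\lambda^{-m}(\phi))$ and HN factors are unique, so $\lambda^{-m}(\phi)\in\Phi_{-}$, i.e. $\phi=\lambda^{m}(\lambda^{-m}(\phi))\in\lambda^{m}(\Phi_{-})$.

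The one genuine subtlety — and the step I expect to need the most care — is the uniqueness/compatibility of HN filtrations that I invoked twice above: namely that an object lying in $(\bigcup_{\psi\in\Phi_{-}}\cQ(\psi))^{+}$ has all its HN factors (from Definition~\ref{def:co-slicing}(3)) of phase in $\Phi_{-}$, and dually for $\Phi_{+}$. This is precisely the content underlying Lemma~\ref{lem:family} and is the analogue of the standard fact that HN filtrations refine along a splitting of $\Phi$; I would either cite the HN-uniqueness machinery from Section~\ref{sec:basic} (the co-slicing analogues of \cite[\S4--5]{GKR}) or, if it is not isolated as a statement, give a two-line argument: an object of $\sA$ has an HN tower with factors in $\bigcup_{\psi\in\Phi_-}\cQ(\psi)$ by repeatedly splitting the $\sA/\sB$ approximation triangles, and then invoke the already-established uniqueness of the HN tower. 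Once this is in hand, everything else is the bookkeeping with $\lambda$ and down-sets sketched above.

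Finally, for (ii) I would simply say: apply (i) to the opposite category. The pair $(\sB^{\op},\sA^{\op})$ is the co-t-structure of the co-slicing $(\Phi^{\op},\{\cQ(\phi)^{\op}\})$ of $\sT^{\op}$ induced by the decomposition $\Phi^{\op}=\Phi_{+}^{\op}\cup\Phi_{-}^{\op}$, the suspension of $\sT^{\op}$ is $\Sigma^{-1}$, the order-automorphism is $\lambda^{-1}$, and ``bounded above'' for $(\sA,\sB)$ translates to ``bounded below'' for $(\sB^{\op},\sA^{\op})$; part (i) then reads $\Phi^{\op}=\bigcup_{n\geq 0}(\lambda^{-1})^{n}(\Phi_{+}^{\op})$, which is exactly $\Phi=\bigcup_{n\geq 0}\lambda^{-n}(\Phi_{+})$. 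Part (iii) is the conjunction of (i) and (ii) together with the definition of ``bounded''. This completes the plan.
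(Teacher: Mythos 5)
Your ``if'' direction of (i) is correct and in fact marginally cleaner than the paper's own argument: where the paper splits into cases according to whether the phases of the HN factors all lie in $\Phi_{-}$, all in $\Phi_{+}$, or are mixed, your observation that $\Phi_{-}$ is a down-set closed under $\lambda^{-1}$ lets you desuspend the whole tower by $m=\max_{j}m_{j}$ and treat every case uniformly. The reduction of (ii) to (i) via the opposite category is also fine (the paper simply says ``analogously''), and (iii) is indeed just the conjunction.

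The gap is in the ``only if'' direction of (i), at exactly the step you flag as the subtlety. Both of your proposed justifications rest on uniqueness of Harder--Narasimhan filtrations, but HN filtrations with respect to a co-slicing are \emph{not} unique: the paper makes this point explicitly and exhibits a counterexample (in Section~\ref{sec:exceptional}, the object $R_{x,1}$ admits a ``non-canonical'' HN filtration whose set of phases differs from that of the canonical one, acquiring an extra factor $P_{0}$). So there is no ``already-established uniqueness of the HN tower'' to invoke, and ``running the HN filtration of $\Sigma^{-m}q$ inside $\sA$'' does not pin down its phase. The step you need is nevertheless true, for a much more elementary reason, which is what the paper's proof uses: $\Sigma^{-m}q$ is a nonzero object of $\cQ(\lambda^{-m}(\phi))$ lying in $\sA$; if $\lambda^{-m}(\phi)$ belonged to $\Phi_{+}$ then $\cQ(\lambda^{-m}(\phi))\subseteq\sB$, so $\Sigma^{-m}q\in\sA\cap\sB$, whence $\Hom{\sT}{\Sigma^{-m}q}{\Sigma^{-m}q}=0$ by Definition~\ref{def:co-t-structure}(2) and $\Sigma^{-m}q\cong 0$, a contradiction. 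Hence $\lambda^{-m}(\phi)\in\Phi_{-}$ and $\phi\in\lambda^{m}(\Phi_{-})$. (Your caveat about possibly zero co-slices is shared with the paper's proof, which also tacitly picks a nonzero $q\in\cQ(\phi)$; this is harmless if one restricts to essential co-slicings.) With the orthogonality argument substituted for the appeal to HN uniqueness, your proof is complete.
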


\begin{proof}
We just prove (i), (ii) is proved analogously and (iii) is proved by combining (i) and (ii).

Suppose $(\sA,\sB)$ is bounded below, i.e. $\sT=\bigcup_{i\in\bZ}\Sigma^{i}\sA$. Now consider $\phi\in\Phi_{+}$. Then we have $\cQ(\phi)\subseteq\sB$. Now given $q\in\cQ(\phi)$ there exists $n>0$ such that $q\in\Sigma^{n}\sA$ since $(\sA,\sB)$ is bounded below, and hence, $\Sigma^{-n}q\in\sA$. But $\Sigma^{-n}q$ is semistable with respect to $(\Phi, \{\cQ(\phi)\, |\, \phi\in\Phi\})$ so that there exists $\phi_{-}\in\Phi_{-}$ such that $\Sigma^{-n}q\in\cQ(\phi_{-})$. It follows that $q\in\Sigma^{n}\cQ(\phi_{-})=\cQ(\lambda^{n}(\phi_{-}))$, in which case $\cQ(\phi)=\cQ(\lambda^{n}(\phi_{-}))$ and we obtain $\phi=\lambda^{n}(\phi_{-})$. Thus we have $\Phi_{+}\subseteq\bigcup_{n>0}\lambda^{n}(\Phi_{-})$ and so it follows that $\Phi=\bigcup_{n\geq 0}\lambda^{n}(\Phi_{-})$.

Conversely, suppose $\Phi=\bigcup_{n\geq 0}\lambda^{n}(\Phi_{-})$ and suppose $0\neq t\in\sT$. Then, since $(\Phi, \{\cQ(\phi)\, |\, \phi\in\Phi\})$ is a co-slicing we obtain an HN filtration:
$$\xymatrix{0\cong t_{0}\ar[r] & t_{1}\ar[r]\ar[d] & t_{2}\ar[r]\ar[d] & \cdots\ar[r] & t_{n-1}\ar[r]\ar[d] & t_{n}\cong t\ar[d] \\
  & q_{1}\ar@{~>}[ul] & q_{2}\ar@{~>}[ul] & & q_{n-1}\ar@{~>}[ul] & q_{n}\ar@{~>}[ul]   & }$$
with $0\neq q_{i}\in\cQ(\phi_{i})$ and $\phi_{1}<\phi_{2}<\cdots < \phi_{n}$.  If all $\phi_{i}\in\Phi_{-}$ then $t\in\sA$ and we are done. If all $\phi_{i}\in\Phi_{+}$ then there exist $n_{i}>0$ so that $\phi_{i}=\lambda^{n_{i}}(\psi_{i})$ with $\psi_{i}\in\Phi_{-}$. Taking $n=\max\{n_{i}\}$ gives $q_{i}\in\Sigma^{n}\sA$ for each $i$. Then extension closure of $\Sigma^{n}\sA$ gives that $t\in\Sigma^{n}\sA$. If we have some $\phi_{i}$ in $\Phi_{+}$ and some in $\Phi_{-}$, then a similar argument also works. Hence we obtain $\sT=\bigcup_{n\in\bZ}\Sigma^{n}\sA$, as required.
\end{proof}

Like in the case of slicings in \cite{GKR}, certain co-slicings induce the same co-t-structures as others. In a manner analogous to the partial order on the set of slicings on a triangulated category given in \cite{GKR}, we define a partial order on the set of co-slicings. We first need another technical definition.

\begin{definition} \label{def:essential}
A co-slicing $(\Phi,\{\cQ(\phi)\,|\,\phi\in\Phi\})$ will be called \emph{essential} if each of its co-slices $\cQ(\phi)$ is non-zero.
\end{definition}

\begin{definition}[Definition 5.3 in \cite{GKR}] \label{def:finer}
Let $(\Phi,\{\cQ(\phi)\,|\,\phi\in\Phi\})$ and $(\Psi,\{\cR(\psi)\,|\,\psi\in\Psi\})$ be co-slicings of a triangulated category $\sT$. Denote the corresponding automorphisms of linearly ordered sets by $\lambda:\Phi\rightarrow\Phi$ and $\mu:\Psi\rightarrow\Psi$, respectively. We say that $\Phi$ is \emph{finer} than $\Psi$ (or $\Psi$ is \emph{coarser} than $\Phi$) if there exists a surjection $r:\Phi\rightarrow\Psi$ such that
\begin{enumerate}
\item $r\circ\lambda = \mu\circ r$;
\item If $\phi_{1}<\phi_{2}$ then $r(\phi_{1})\leq r(\phi_{2})$;
\item For all $\psi\in\Psi$ we have $\cR(\psi)=(\bigcup_{\phi\in r^{-1}(\psi)}\cQ(\phi))^{+}$.
\end{enumerate}
\end{definition}

\begin{remark} \label{rem:essential}
Given a co-slicing of $\sT$, one can always find a finer co-slicing by adding zero co-slices. However, this finer co-slicing will contain exactly the same information as the one with which we started.
\end{remark}

The following lemma says that when determining which co-t-structures are induced by co-slicings, we need only consider the minimal elements with respect to the partial order defined above; we shall refer to such co-slicings as \emph{minimal co-slicings}.

\begin{lemma} \label{lem:compare}
If the co-slicing $(\Psi,\{\cR(\psi)\,|\,\psi\in\Psi\})$ is coarser than $(\Phi,\{\cQ(\phi)\,|\,\phi\in\Phi\})$, then a co-t-structure in $\sT$ induced by the co-slicing $(\Psi,\{\cR(\psi)\,|\,\psi\in\Psi\})$ is also induced by the co-slicing $(\Phi,\{\cQ(\phi)\,|\,\phi\in\Phi\})$.
\end{lemma}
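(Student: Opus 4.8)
The plan is to transport the partition of $\Psi$ which defines $(\sA,\sB)$ backwards along the surjection $r\colon\Phi\to\Psi$ witnessing that $\Phi$ is finer than $\Psi$, to produce a partition of $\Phi$, and then to check that this partition induces the very same co-t-structure.

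First I would unwind Definition~\ref{def:induced}: since $(\sA,\sB)$ is induced by $(\Psi,\{\cR(\psi)\,|\,\psi\in\Psi\})$ there is a partition $\Psi=\Psi_{-}\cup\Psi_{+}$ with $\psi_{-}<\psi_{+}$ whenever $\psi_{-}\in\Psi_{-}$ and $\psi_{+}\in\Psi_{+}$, and $\sA=(\bigcup_{\psi\in\Psi_{-}}\cR(\psi))^{+}$, $\sB=(\bigcup_{\psi\in\Psi_{+}}\cR(\psi))^{+}$. I would then set $\Phi_{-}:=r^{-1}(\Psi_{-})$ and $\Phi_{+}:=r^{-1}(\Psi_{+})$; since $r$ is surjective and $\Psi_{-}, \Psi_{+}$ are disjoint with union $\Psi$, this is a disjoint union $\Phi=\Phi_{-}\cup\Phi_{+}$. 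To verify the hypothesis of Lemma~\ref{lem:family} I would argue by contradiction: if $\phi_{-}\geq\phi_{+}$ for some $\phi_{-}\in\Phi_{-}$ and $\phi_{+}\in\Phi_{+}$, disjointness rules out $\phi_{-}=\phi_{+}$, so $\phi_{+}<\phi_{-}$, and then Definition~\ref{def:finer}(2) gives $r(\phi_{+})\leq r(\phi_{-})$; but $r(\phi_{-})\in\Psi_{-}$ and $r(\phi_{+})\in\Psi_{+}$ force $r(\phi_{-})<r(\phi_{+})$, a contradiction. Thus Lemma~\ref{lem:family} applied to $\Phi=\Phi_{-}\cup\Phi_{+}$ produces a co-t-structure $(\sA',\sB')$ with $\sA'=(\bigcup_{\phi\in\Phi_{-}}\cQ(\phi))^{+}$ and $\sB'=(\bigcup_{\phi\in\Phi_{+}}\cQ(\phi))^{+}$, which by Definition~\ref{def:induced} is induced by $(\Phi,\{\cQ(\phi)\,|\,\phi\in\Phi\})$.

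It then remains to check $\sA'=\sA$ and $\sB'=\sB$. Using Definition~\ref{def:finer}(3) I would rewrite $\sA=(\bigcup_{\psi\in\Psi_{-}}(\bigcup_{\phi\in r^{-1}(\psi)}\cQ(\phi))^{+})^{+}$, whereas $\sA'=(\bigcup_{\psi\in\Psi_{-}}\bigcup_{\phi\in r^{-1}(\psi)}\cQ(\phi))^{+}$; these agree by the elementary fact that for any family of subcategories $\{\sS_{i}\}$ one has $(\bigcup_{i}\sS_{i})^{+}=(\bigcup_{i}\sS_{i}^{+})^{+}$, which follows from $(-)^{+}$ being a monotone idempotent closure operator together with the chain $\bigcup_{i}\sS_{i}\subseteq\bigcup_{i}\sS_{i}^{+}\subseteq(\bigcup_{i}\sS_{i})^{+}$. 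The equality $\sB'=\sB$ is obtained identically, so $(\sA,\sB)=(\sA',\sB')$ is indeed induced by $(\Phi,\{\cQ(\phi)\,|\,\phi\in\Phi\})$. I do not expect a genuine obstacle here: the only steps requiring a little care are the strictness of the order separation between $\Phi_{-}$ and $\Phi_{+}$ (settled above via Definition~\ref{def:finer}(2) and disjointness) and the closure-operator identity, both routine.
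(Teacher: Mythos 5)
Your proposal is correct and follows essentially the same route as the paper: pull the partition of $\Psi$ back along $r$ to a partition $\Phi=r^{-1}(\Psi_{-})\cup r^{-1}(\Psi_{+})$, apply Lemma~\ref{lem:family}, and identify the two resulting co-t-structures using Definition~\ref{def:finer}(3). The paper simply asserts the final equality of subcategories, whereas you justify it via the idempotent-closure identity $(\bigcup_i \sS_i)^{+}=(\bigcup_i \sS_i^{+})^{+}$ and also verify the order-separation hypothesis explicitly; both are correct fillings-in of details the paper leaves implicit.
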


\begin{proof}
By Lemma \ref{lem:family}, there is a co-t-structure in $\sT$ defined by
$$
\sA_{\Psi} := (\bigcup_{\psi\in\Psi_{-}}\cR(\psi))^{+} \quad {\rm and } \quad
\sB_{\Psi}  :=  (\bigcup_{\psi\in\Psi_{+}}\cR(\psi))^{+}.
$$
Since $(\Psi,\{\cR(\psi)\,|\,\psi\in\Psi\})$ is coarser than $(\Phi,\{\cQ(\phi)\,|\,\phi\in\Phi\})$, there exists a surjection $r:\Phi\rightarrow\Psi$ satisfying the conditions of Definition \ref{def:finer}. Let $\Phi_{\psi}=r^{-1}(\psi)\subset\Phi$.
Now, setting $\Phi_{-} = \{\phi\in\Phi\,|\,\phi\in\Phi_{\psi},\,\psi\in\Psi_{-}\}$ and $\Phi_{+}  = \{\phi\in\Phi\,|\,\phi\in\Phi_{\psi},\,\psi\in\Psi_{+}\}$ and writing $\Phi=\Phi_{-}\cup\Phi_{+}$, Lemma \ref{lem:family} gives us a co-t-structure defined by:
$$\sA_{\Phi} := (\bigcup_{\phi\in\Phi_{-}}\cQ(\phi))^{+} \quad {\rm and} \quad
\sB_{\Phi} :=  (\bigcup_{\phi\in\Phi_{+}}\cQ(\phi))^{+}.$$
Now one simply observes that
$\sA_{\Psi}=(\bigcup_{\psi\in\Psi_{-}}\bigcup_{\phi\in\Phi_{\psi}}\cQ(\phi))^{+} =(\bigcup_{\phi\in\Phi_{-}}\cQ(\phi))^{+}=\sA_{\Phi}.$
Similarly, $\sB_{\Psi}=\sB_{\Phi}$.
\end{proof}

\begin{remark} \label{rem:criterion}
Note that for an essential co-slicing $(\Phi,\{\cQ(\phi)\,|\,\phi\in\Phi\})$ of $\sT$, if for all $\phi\in\Phi$ and all non-zero $x,y\in\cQ(\phi)$ we have $\Hom{\sT}{x}{y}\neq 0$ and $\Hom{\sT}{y}{x}\neq 0$ then $(\Phi,\{\cQ(\phi)\,|\,\phi\in\Phi\})$ is minimal amongst the essential co-slicings.
\end{remark}


\section{Co-slicings coming from almost silting subcategories}\label{sec:almost}

In this section we study co-slicings arising from almost silting subcategories (Definition~\ref{def:almost-silting}). In the case of the bounded derived category of the Kronecker algebra, a partial silting subcategory that is not silting is almost silting. Thus, the results of this section will be especially applicable to the case of the Kronecker algebra, and indeed, will be crucial to our proof that we have all the co-t-structures for the Kronecker algebra in Theorem~\ref{thm:all}. 
However, the theory works for Krull-Schmidt triangulated categories, so the results are stated and proved in this level of generality. Moreover, we believe that this theory could hold independent interest.

\begin{definition} \label{def:almost-silting}
Let $\sT$ be a Krull-Schmidt triangulated category. A partial silting subcategory $\sS$ of $\sT$ is called an \emph{almost silting subcategory} if there exists an indecomposable object $s$ of $\sT$ such that $\add {\sS\cup\{s\}}$ is a silting subcategory of $\sT$.
\end{definition}

\begin{proposition} \label{prop:silting-bounded}
Let $\sT$ be a Krull-Schmidt triangulated category. Suppose $(\sA,\sB)$ is a co-t-structure in $\sT$ whose co-heart $\sC$ is an almost silting subcategory. Suppose $s \in \ind \sT$, is a complement for $\sC$, i.e. $\sS = \add{\sC \cup \{s\}}$ is a silting subcategory. Then:
\begin{enumerate}[label=(\roman*),leftmargin=*]
\item If $\Hom{\sT}{\sC}{\Sigma^n s} = 0$ for all $n \leq 0$ then the co-t-structure $(\sA,\sB)$ is either bounded above or below.
Moreover, $\Sigma^k s \in \sB$ for some $k \in \bZ$ if and only if $\Sigma^k s \in \sB$ for all $k \in \bZ$; in this case $(\sA,\sB)$ is bounded above.
\item If $\Hom{\sT}{s}{\Sigma^n \sC} = 0$ for all $n \leq 0$ then the co-t-structure $(\sA,\sB)$ is either  bounded above or below.
Moreover, $\Sigma^k s \in \sA$ for some $k \in \bZ$ if and only if $\Sigma^k s \in \sA$ for all $k \in \bZ$; in this case $(\sA,\sB)$ is bounded below.
\end{enumerate}
\end{proposition}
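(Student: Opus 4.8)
The plan is to reduce the two assertions to the question of where the suspensions of the complement $s$ sit relative to $(\sA,\sB)$. Part (ii) is part (i) read in $\sT^{\op}$: there $(\sA,\sB)$ becomes the co-t-structure $(\sB^{\op},\sA^{\op})$ with co-heart $\sC^{\op}$, ``bounded above'' and ``bounded below'' are interchanged, $\Hom{\sT}{s}{\Sigma^n\sC}$ becomes $\Hom{\sT^{\op}}{\Sigma^{-n}\sC}{s}$, and ``(almost) silting'' is self-dual; so I treat only (i). I assume $\sC$ is not silting, this being what distinguishes \emph{almost} silting from silting in Definition~\ref{def:almost-silting}; equivalently $s\notin\thick{\sT}{\sC}$, since otherwise $\thick{\sT}{\sC}=\thick{\sT}{\sC\cup\{s\}}=\sT$. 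I use two standard facts: for a co-t-structure, $\sB=\{t\mid\Hom{\sT}{a}{t}=0\text{ for all }a\in\sA\}$ and symmetrically for $\sA$, so $\sA\cap\sB=0$; and a co-t-structure is bounded exactly when its co-heart is silting (cf.\ \cite{AI,Bondarko}), so $\sS$ is the co-heart of the bounded co-t-structure $(\sX,\sY)=(\cosusp{\sT}{\sS},\susp{\sT}{\Sigma\sS})$.

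\textbf{Orthogonality and the ambient co-slicing.} Since $\sC\cup\{s\}\subseteq\sS$ is partial silting, $\Hom{\sT}{\sC}{\Sigma^n s}=0$ for $n>0$; with the hypothesis of (i) this improves to $\Hom{\sT}{\sC}{\Sigma^n s}=0$ for all $n\in\bZ$, so the cohomological functor $\Hom{\sT}{-}{\Sigma^n s}$ kills $\cosusp{\sT}{\sC}$ for every $n$. Together with $\Hom{\sT}{\Sigma^m\sC}{\Sigma^n s}=0$ for all $m,n$ and $\Hom{\sT}{\sC}{\Sigma\sC}=0$ (which forces extensions inside $\add{\sC}$ to split), this assembles into a co-slicing $\cR$ of $\sT$ indexed by $\bZ\cup\{\infty\}$, $\infty$ maximal, with $\cR(n)=\Sigma^n\add{\sC}$ and $\cR(\infty)=(\add{\{\Sigma^k s\mid k\in\bZ\}})^{+}$; its Harder--Narasimhan filtrations are obtained from the $(\sX,\sY)$-weight filtration of an object by splitting each $\add\sS$-factor into its $\add\sC$-part and its $s$-part and permuting all $s$-parts to the top, exactly as in the proof of Proposition~\ref{prop:bounded_co-slicing}. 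Only the underlying weight filtration is needed below, but $\cR$ is the natural ambient structure.

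\textbf{Reduction to a single claim.} Put $U=\{k\mid\Sigma^k s\in\sA\}$ and $W=\{k\mid\Sigma^k s\in\sB\}$. Then $U$ is closed under $k\mapsto k-1$ and $W$ under $k\mapsto k+1$ (by $\Sigma^{-1}\sA\subseteq\sA$ and $\Sigma\sB\subseteq\sB$), $U\cap W=\emptyset$, and no $k$ satisfies $k\in U$ and $k+1\in W$, since then $\Sigma^k s\in\sA\cap\Sigma^{-1}\sB=\sC$, forcing $s\in\thick{\sT}{\sC}$. The single non-formal ingredient I would establish is the \textbf{Main Claim:} if $U=\emptyset$ then $\sA=\cosusp{\sT}{\sC}$. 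Granting it, $U$ and $W$ cannot both be non-empty: if $\Sigma^{k_1}s\in\sA$ and $\Sigma^{k_2}s\in\sB$ then, $U$ being unbounded below and $\sC\subseteq\sA$, the thick subcategory $\bigcup_i\Sigma^i\sA$ contains all suspensions of $\sC$ and of $s$, so equals $\thick{\sT}{\sC\cup\{s\}}=\sT$, and likewise $\bigcup_i\Sigma^i\sB=\sT$, making $(\sA,\sB)$ bounded and $\sC$ silting, a contradiction. Moreover, if $U=\emptyset$ then by the Main Claim $\Hom{\sT}{\sA}{\Sigma^k s}=\Hom{\sT}{\cosusp{\sT}{\sC}}{\Sigma^k s}=0$ for all $k$, so every $\Sigma^k s\in\sB$ and $W=\bZ$. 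Hence if $U\neq\emptyset$ then $W=\emptyset$ and $\bigcup_i\Sigma^i\sA=\sT$, so $(\sA,\sB)$ is bounded below (not above, else $\sC$ silting); and if $U=\emptyset$ then $W=\bZ$ and $\bigcup_i\Sigma^i\sB=\sT$, so $(\sA,\sB)$ is bounded above. Either way $(\sA,\sB)$ is bounded above or below, and ``$\Sigma^k s\in\sB$ for some $k$'' is equivalent to $W\neq\emptyset$, hence to $W=\bZ$, hence to ``$\Sigma^k s\in\sB$ for all $k$'', and holds precisely in the bounded-above case.

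\textbf{The obstacle.} The Main Claim is the real content. The inclusion $\cosusp{\sT}{\sC}\subseteq\sA$ is immediate; for the converse, take $a\in\sA$ and induct on the length of its weight (equivalently $\cR$-) filtration, reducing after a split to a top factor $\Sigma^l x$ with $0\neq x\in\add{\sC}$, or $\Sigma^l s^{m}$ with $m\geq 1$. A top factor $\Sigma^l x$ is harmless: for $l\leq 0$ one rotates to present $a$ as an extension of $\Sigma^l x\in\Sigma^{\leq 0}\sC\subseteq\cosusp{\sT}{\sC}$ by an object of $\sA$ of shorter filtration length; for $l\geq 1$ one has $\Sigma^l x\in\sB$, so the structure map $a\to\Sigma^l x$ vanishes by orthogonality, the triangle splits, and $\Sigma^{l-1}x$ is a summand of $a\in\sA$, which for $l\geq 2$ lies in $\sA\cap\sB=0$, so in fact $l=1$ and $x$ is itself a summand of $a$; either way the induction closes by extension-closedness of $\cosusp{\sT}{\sC}$. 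The hard case is a top factor $\Sigma^l s^{m}$: since $U=\emptyset$, were $\Sigma^l s\in\sB$ the splitting argument would place $\Sigma^{l-1}s$ in $\sA$, contradicting $U=\emptyset$; hence $\Sigma^l s\notin\sA\cup\sB$, the connecting map is non-zero, and the splitting trick is unavailable. Ruling this out -- equivalently, showing that when $U=\emptyset$ no suspension of $s$ occurs in the weight filtration of any object of $\sA$ -- is where the indecomposability of $s$, the Krull--Schmidt hypothesis, and the uniqueness of Harder--Narasimhan filtrations for $\cR$ must be brought to bear; I expect this to be the bulk of the work.
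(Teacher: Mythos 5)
Your reduction of the proposition to the ``Main Claim'' (if no suspension of $s$ lies in $\sA$, then $\sA$ is controlled by $\sC$) is structurally sound and matches the skeleton of the paper's argument: the paper likewise splits into the case $\Sigma^k s \in \sA$ for some $k$ (immediately bounded below) and the case $\Sigma^k s \notin \sA$ for all $k$, and in the latter case proves $\sA \subseteq \thick{\sT}{\sC}$. But you explicitly leave the Main Claim unproved, and it is not a routine verification to be deferred --- it is the entire content of the proposition. As written, the proposal therefore has a genuine gap, and your closing speculation points in the wrong direction: the paper's argument uses neither uniqueness of Harder--Narasimhan filtrations nor any delicate Krull--Schmidt combinatorics beyond closure of $\sA$ under direct summands.

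Here is how the gap is closed. Take $0 \neq a \in \sA$ and form exactly the reordered weight filtration you describe: a triangle $\tri{t'_n}{a}{s'}$ in which $t'_n$ is filtered by $\Sigma^{i_1}c_1, \dots, \Sigma^{i_n}c_n$ with $c_l \in \sC$ and $i_1 < \cdots < i_n$, and $s' \in \add{\{\Sigma^k s \mid k \in \bZ\}}$. You correctly observe that the connecting map need not vanish, so the triangle need not split --- but splitting is not needed. Since $\Sigma^{i_l}\sC \subseteq \Sigma^{i_l}\sA \subseteq \Sigma^{i_n}\sA$, extension closure gives $t'_n \in \Sigma^{i_n}\sA$. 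Now \emph{rotate}: the triangle $a \to s' \to \Sigma t'_n \to \Sigma a$ exhibits $s'$ as an extension of $\Sigma t'_n \in \Sigma^{i_n+1}\sA$ by $a \in \sA$, so $s' \in \Sigma^{j}\sA$ with $j = \max\{0, i_n+1\}$. Since $\Sigma^{j}\sA$ is closed under direct summands and $U = \emptyset$, every indecomposable summand $\Sigma^{i_l}s$ of $s'$ is forbidden, whence $s' \cong 0$ and $a \cong t'_n \in \thick{\sT}{\sC}$. Note this yields only $\sA \subseteq \thick{\sT}{\sC}$, not your stronger claim $\sA = \cosusp{\sT}{\sC}$, but the weaker inclusion suffices: it gives $\Hom{\sT}{\sA}{\Sigma^k s} = 0$ for all $k$, hence $\Sigma^k s \in \sB$ for all $k$ by your own observation that $\sA^{\perp} \subseteq \sB$, and then boundedness above follows either by your route ($\bigcup_i \Sigma^i\sB \supseteq \thick{\sT}{\sC \cup \{s\}} = \sT$) or, as in the paper, by citing \cite[Remark 4.6(b)]{MSSS}. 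With this insertion the rest of your argument (the dichotomy on $U$ and $W$, the duality reducing (ii) to (i)) goes through.
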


\begin{proof}
We prove statement (i); the proof for (ii) is analogous.

Since $\sS$ is a silting subcategory, it is the co-heart of a bounded co-t-structure $(\sA_{\sS},\sB_{\sS})$  by \cite[Corollary 5.8]{MSSS}. Hence, by \cite[Proposition 1.5.6]{Bondarko}, each object $0\neq t$ sits in a tower of distinguished triangles:
\begin{equation}
\label{eq:canonical}
\xymatrix{
0 \cong t_{0}\ar[r] & t_{1}\ar[r]\ar[d]                             & t_{2}\ar[r]\ar[d]                            & \cdots\ar[r] & t_{n-1}\ar[r] & t_{n}\cong t \ar[d] \\
                    & \Sigma^{i_{1}}( c_1 \oplus s_1 ) \ar@{~>}[ul] & \Sigma^{i_{2}}( c_2 \oplus s_2 )\ar@{~>}[ul] &              &               & \Sigma^{i_{n}}( c_n \oplus s_n ) \ar@{~>}[ul] 
}
\end{equation}
with $i_1 < i_2 < \cdots < i_n$ and $c_i\in \sC$ and $s_i \in \add{s}$. 
Since $\sS$ is a silting set and  $\Hom{\sT}{\sC}{\Sigma^n s} = 0$ for all $n \leq 0$, one can apply \cite[Lemmas 7.1 and 7.2]{Co-stability} iteratively to the tower above to get the following tower of distinguished triangles:
\begin{equation}
\label{eq:mixed-up}
\xymatrix{
0 \cong t'_0\ar[r] & t'_1\ar[r]\ar[d]                             & t'_2\ar[r]\ar[d]                            & \cdots\ar[r] & t'_{n-1}\ar[r] & t'_n\ar[r]\ar[d] & t_{n+1}'\cong t \ar[d] \\
                    & \Sigma^{i_{1}} c_1 \ar@{~>}[ul] & \Sigma^{i_{2}} c_2 \ar@{~>}[ul] &              &               & \Sigma^{i_{n}} c_n \ar@{~>}[ul] & s'\ar@{~>}[ul]
}
\end{equation}
with $s'=\Sigma^{i_1} s_1 \oplus \cdots \oplus \Sigma^{i_n} s_n$.

If $\Sigma^k s \in \sA$ for some $k\in \bZ$, then $(\sA,\sB)$ is bounded below and there is nothing to prove. So assume $\Sigma^k s \notin \sA$ for any $k \in \bZ$. Let $0\neq a\in \sA$ and consider the tower \eqref{eq:mixed-up} with $t=a$. The first $n$ triangles of tower \eqref{eq:mixed-up} is thus an HN filtration of $t'_n$ and thus we find that $t'_n \in \Sigma^{i_n} \sA$. Using the triangle $\tri{t'_n}{a}{s'}$, we see that $s' \in \Sigma^j \sA$ where $j=\max \{0, i_n +1\}$, which implies $s' \cong 0$. Hence, $a\cong t'_n \in \thick{\sT}{\sC}$, giving $\sA \subseteq \thick{\sT}{\sC}$. Now \cite[Remark 4.6(b)]{MSSS} implies that $(\sA,\sB)$ is bounded above, proving the first claim.
(Note \cite[Theorem 4.5(b)]{MSSS} says $\thick{\sT}{\sC} = \sC^{\sim}$, and \cite[Remark 4.6(b)]{MSSS} gives the inclusion $\sA \subseteq \sC^{\sim}$; see \cite[Definition 2.2]{MSSS} for a precise description of this notation.)

For the second claim, one direction is clear. For the other, assume that $\Sigma^k s \in \sB$ for some $k\in\bZ$. We have $\thick{\sT}{s}\subseteq \thick{\sT}{\sC}^{\perp}$ by assumption, and for each $0\neq t \in \sT$, the tower \eqref{eq:mixed-up} gives a decomposition $\tri{c'}{t}{s'}$ with $c' \in \thick{\sT}{\sC}$ and $s'\in \thick{\sT}{s}$. Thus, $(\thick{\sT}{\sC},\thick{\sT}{s})$ is a stable t-structure in $\sT$. Hence, $\thick{\sT}{s}= \thick{\sT}{\sC}^{\perp}$. Now, since $(\sA,\sB)$ is bounded above,  \cite[Remark 4.6(b)]{MSSS} implies that $\sA \subseteq \thick{\sT}{\sC}$, in which case $\thick{\sT}{s}\subseteq \sB$, as required.
\end{proof}

\begin{corollary} \label{cor:silting-bounded}
Let $(\sA,\sB)$ be a co-t-structure in $\sT$ satisfying the hypotheses of Proposition~\ref{prop:silting-bounded}. Suppose further that $\Hom{\sT}{\sC}{\Sigma^n s} = 0$ for all $n \leq 0$ and $(\sA,\sB)$ is bounded above. Then the following data define a co-slicing $(\Phi, \{\cQ(\phi)\,|\, \phi\in\Phi\})$ of $\sT$:
\begin{itemize}
\item $\Phi=\bZ \cup \{\infty\}$ with the standard linear order on $\bZ$ and $n<\infty$ for all $n\in\bZ$;
\item $\lambda(n) = n+1$ for $n\in \bZ$ and $\lambda(\infty)=\infty$;
\item $\cQ(n) = \Sigma^n \sC$ for $n\in \bZ$ and $\cQ(\infty)=\bigcap_{i \in \bZ} \Sigma^i \sB$.
\end{itemize}
\end{corollary}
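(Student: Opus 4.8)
The plan is to verify, one at a time, the requirements of Definition~\ref{def:co-slicing}: that each $\cQ(\phi)$ is a full subcategory of $\sT$ closed under direct summands and extensions, and that axioms (1), (2) and (3) hold. Everything here is routine except the existence of Harder--Narasimhan filtrations, which is where the real content lies. For the closure properties and axiom~(1): the co-heart $\sC=\sA\cap\Sigma^{-1}\sB$ of a co-t-structure is closed under direct summands and (since $\sA$ and $\sB$ are extension closed) under extensions, hence so is each $\cQ(n)=\Sigma^{n}\sC$; and $\cQ(\infty)=\bigcap_{i\in\bZ}\Sigma^{i}\sB$ is an intersection of subcategories with these two properties, so it inherits them. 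The map $\lambda$ is evidently an order automorphism of $\Phi=\bZ\cup\{\infty\}$ with $\lambda(\phi)\geq\phi$, and $\Sigma\cQ(n)=\Sigma^{n+1}\sC=\cQ(\lambda(n))$, while $\Sigma\cQ(\infty)=\bigcap_{i\in\bZ}\Sigma^{i+1}\sB=\cQ(\infty)=\cQ(\lambda(\infty))$ since $\Sigma$ commutes with intersections of subcategories.

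For the orthogonality axiom~(2) I would split into the two possible cases for $\phi_{1}<\phi_{2}$ (there is none with $\phi_{1}=\infty$, as $\infty$ is maximal). If $\phi_{1}=m$ and $\phi_{2}=n$ both lie in $\bZ$, then $\Hom{\sT}{\Sigma^{m}\sC}{\Sigma^{n}\sC}\cong\Hom{\sT}{\sC}{\Sigma^{n-m}\sC}=0$ because $\sC$ is partial silting and $n-m>0$. If $\phi_{1}=m\in\bZ$ and $\phi_{2}=\infty$, then $\Sigma^{m}\sC\subseteq\Sigma^{m}\sA\subseteq\Sigma^{m+1}\sA$ (using $\Sigma^{-1}\sA\subseteq\sA$) and $\cQ(\infty)\subseteq\Sigma^{m+1}\sB$, so $\Hom{\sT}{\cQ(m)}{\cQ(\infty)}=0$ by Definition~\ref{def:co-t-structure}(2) after desuspending by $m+1$.

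The main work is axiom~(3). Given $0\neq t\in\sT$, I would invoke the tower of distinguished triangles~\eqref{eq:mixed-up} constructed in the proof of Proposition~\ref{prop:silting-bounded}, with $i_{1}<\cdots<i_{n}$ in $\bZ$, each $c_{j}\in\sC$, and $s'=\Sigma^{i_{1}}s_{1}\oplus\cdots\oplus\Sigma^{i_{n}}s_{n}$ where $s_{j}\in\add{s}$. The decisive point is that $s'\in\cQ(\infty)$: since $(\sA,\sB)$ is bounded above, $s$ lies in $\Sigma^{i_{0}}\sB$ for some $i_{0}\in\bZ$, so $\Sigma^{-i_{0}}s\in\sB$, and then Proposition~\ref{prop:silting-bounded}(i) forces $\Sigma^{k}s\in\sB$ for every $k\in\bZ$; as $\sB$ is closed under direct summands, each $\Sigma^{i_{j}}s_{j}$ (a summand of a finite direct sum of copies of $\Sigma^{i_{j}}s$) lies in $\Sigma^{i}\sB$ for all $i\in\bZ$, whence $s'\in\bigcap_{i\in\bZ}\Sigma^{i}\sB=\cQ(\infty)$. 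Deleting from~\eqref{eq:mixed-up} those triangles with vanishing cone, the surviving subquotients are the $\Sigma^{i_{j}}c_{j}\in\cQ(i_{j})$ with $c_{j}\neq 0$, followed by $s'$ if $s'\neq 0$; since $i_{1}<\cdots<i_{n}<\infty$ the phases strictly increase and, as $t\neq 0$, not every subquotient vanishes, so this is the required Harder--Narasimhan filtration of $t$.

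I expect the only genuine obstacle to be the claim $s'\in\cQ(\infty)$: it is precisely here that the hypothesis that $(\sA,\sB)$ is bounded above enters, via Proposition~\ref{prop:silting-bounded}(i), to trap the $\add{s}$-part of the weight tower of $t$ inside every suspension of $\sB$. Everything else amounts to unwinding the definitions, together with the minor bookkeeping of discarding zero subquotients from~\eqref{eq:mixed-up}.
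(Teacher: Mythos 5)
Your proposal is correct and follows essentially the same route as the paper: the paper's proof likewise deduces from boundedness above and Proposition~\ref{prop:silting-bounded}(i) that $\Sigma^k s\in\sB$ for all $k$ (so the $\add{s}$-part lands in $\cQ(\infty)$), and takes the tower \eqref{eq:mixed-up} obtained from \eqref{eq:canonical} via [Co-stability, Lemmas 7.1 and 7.2] as the Harder--Narasimhan filtration. You merely spell out the routine verifications of the closure properties and axioms (1) and (2), which the paper leaves implicit.
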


\begin{proof}
The co-t-structure $(\sA,\sB)$ is bounded above, so $s\in\Sigma^k \sB$ for some $k\in \bZ$. Proposition \ref{prop:silting-bounded}(i) now implies that $\Sigma^k s \in \bigcap_{n\in\bZ} \Sigma^n \sB$ for all $k\in \bZ$. 

To obtain the HN filtration of $0\neq t\in\sT$, one can write down a canonical HN filtration with respect to the bounded co-t-structure $(\sA_{\sS},\sB_{\sS})$ as in \eqref{eq:canonical} and then use \cite[Lemmas 7.1 and 7.2]{Co-stability} to obtain the tower \eqref{eq:mixed-up}. This tower is now an HN filtration for $t$ with respect to the co-slicing described above.
\end{proof}

\begin{corollary} \label{cor:silting-bounded-induced}
Let $(\sA,\sB)$ and $(\Phi, \{\cQ(\phi) \, | \, \phi\in\Phi\})$ be as in Corollary~\ref{cor:silting-bounded}. Let $(\sA_n,\sB_n)$ be the co-t-structure of Corollary~\ref{cor:family} induced by some $n \in \bZ \subset \Phi$. Then $(\sA_n,\sB_n)=(\Sigma^{n-1} \sA,\Sigma^{n-1} \sB)$.
\end{corollary}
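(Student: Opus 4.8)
The plan is to deduce the identity from two elementary containments of subcategories together with one standard splitting argument, without analysing Harder--Narasimhan filtrations at all. Recall that in the co-slicing $(\Phi,\{\cQ(\phi)\,|\,\phi\in\Phi\})$ of Corollary~\ref{cor:silting-bounded} we have $\Phi=\bZ\cup\{\infty\}$, $\cQ(m)=\Sigma^m\sC$ for $m\in\bZ$, and $\cQ(\infty)=\bigcap_{i\in\bZ}\Sigma^i\sB$; so Corollary~\ref{cor:family} with $\phi_0=n$ gives $\sA_n=(\bigcup_{m<n}\Sigma^m\sC)^+$ and $\sB_n=(\bigcup_{m\geq n}\Sigma^m\sC\cup\cQ(\infty))^+$, where $(-)^+$ denotes closure under extensions and direct summands. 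Observe also that $(\Sigma^{n-1}\sA,\Sigma^{n-1}\sB)$ is again a co-t-structure, being the shift of one.

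First I would verify the containments $\sA_n\subseteq\Sigma^{n-1}\sA$ and $\sB_n\subseteq\Sigma^{n-1}\sB$. For the first, $\sC\subseteq\sA$ and $\Sigma^{-1}\sA\subseteq\sA$ give $\Sigma^m\sC\subseteq\Sigma^m\sA\subseteq\Sigma^{n-1}\sA$ for every $m\leq n-1$, and $\Sigma^{n-1}\sA$ is extension- and summand-closed. For the second, $\sC=\sA\cap\Sigma^{-1}\sB$ gives $\Sigma\sC\subseteq\sB$, so $\Sigma\sB\subseteq\sB$ yields $\Sigma^m\sC\subseteq\Sigma^{m-1}\sB\subseteq\Sigma^{n-1}\sB$ for every $m\geq n$, while $\cQ(\infty)\subseteq\Sigma^{n-1}\sB$ trivially, and again $\Sigma^{n-1}\sB$ is extension- and summand-closed.

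Then I would promote these to equalities by a splitting argument. Given $a\in\Sigma^{n-1}\sA$, Definition~\ref{def:co-t-structure}(3) for the co-t-structure $(\sA_n,\sB_n)$ produces a triangle $\tri{a'}{a}{b'}$ with $a'\in\sA_n$ and $b'\in\sB_n$. By the containments just proved $a\in\Sigma^{n-1}\sA$ and $b'\in\Sigma^{n-1}\sB$, so $\Hom{\sT}{a}{b'}=0$ by orthogonality in $(\Sigma^{n-1}\sA,\Sigma^{n-1}\sB)$; hence the morphism $a\to b'$ in the triangle vanishes, the triangle splits, and $a$ is a direct summand of $a'\in\sA_n$, whence $a\in\sA_n$. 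This gives $\Sigma^{n-1}\sA=\sA_n$, and the mirror-image argument starting from $b\in\Sigma^{n-1}\sB$ with a $(\sA_n,\sB_n)$-truncation triangle $\tri{a''}{b}{b''}$ gives $\Sigma^{n-1}\sB=\sB_n$.

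I do not expect a real difficulty; this is a bookkeeping corollary. The only point that deserves attention is that one genuinely needs \emph{both} containments before the splitting step can run: the vanishing of the connecting morphism in the $(\sA_n,\sB_n)$-triangle is extracted from orthogonality in the \emph{other} co-t-structure, so it really uses that $b'$ lies in $\Sigma^{n-1}\sB$ --- knowing only that the aisles satisfy $\sA_n\subseteq\Sigma^{n-1}\sA$ would not suffice.
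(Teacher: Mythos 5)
Your proof is correct and follows essentially the same route as the paper: both establish the containments $\sA_n\subseteq\Sigma^{n-1}\sA$ and $\sB_n\subseteq\Sigma^{n-1}\sB$ and then conclude that two nested co-t-structures coincide. The only difference is that the paper delegates this last step to \cite[Lemma 1.3.8]{Bondarko}, whereas you prove it inline with the standard splitting argument, which is exactly the proof of that lemma.
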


\begin{proof}
Suppose $n \in \bZ\subset \Phi$. Then $(\sA_n,\sB_n)$ is given by
\[
\sA_n = (\bigcup_{i< n} \cQ(i))^{+} \quad \text{and} \quad \sB_n = ((\bigcup_{i \geq n} \cQ(i))\cup \cQ(\infty))^{+}.
\]
Now since $\cQ(i) \subset \Sigma^i \sC \subset \Sigma^{n-1} \sA$ for all $i < n$, and $\sA$ is closed under extensions and direct summands, it follows that $\sA_n \subset \Sigma^{n-1}\sA$. Similarly, $\sB_n \subset \Sigma^{n-1}\sB$. Thus, by \cite[Lemma 1.3.8]{Bondarko}, it follows that $\sA_n = \Sigma^{n-1}\sA$ and $\sB_n = \Sigma^{n-1}\sB$.
\end{proof}

Statements analogous to Corollaries~\ref{cor:silting-bounded} and \ref{cor:silting-bounded-induced} hold when $\Hom{\sT}{s}{\Sigma^{n}\sC} = 0$ for $n\leq 0$ and $(\sA,\sB)$ is bounded below. Note that in Corollary~\ref{cor:silting-bounded-induced}, if one were to take $n=\infty$, the co-t-structure $(\sA_{\infty},\sB_{\infty})$ is a stable co-t-structure.


\section{Recapitulation on the Kronecker algebra}
\label{sec:recap}

In this section we provide a brief recapitulation of the information we need regarding the structure of the bounded derived category of the Kronecker algebra. Background information on Auslander-Reiten theory can be found in \cite{ASS} and \cite{ARS}. Specific information on the module category of the Kronecker algebra can be found in \cite[Section VIII.7]{ARS} and \cite[Section XI.4]{SS}. The standard reference for the derived category of a finite-dimensional hereditary algebra is \cite{Happel}.

Let $\bK$ be an algebraically closed field, and $Q$ be the Kronecker quiver. The path algebra $\bK Q$ is called the \emph{Kronecker algebra}. We shall denote by $\mod{\bK Q}$ the category of finite dimensional left $\bK Q$-modules. The structure of the Auslander-Reiten quiver of $\mod{\bK Q}$ is given below:

\

\begin{center}
\begin{tikzpicture}
\draw (0,0) -- (2.4,0);
\draw (0.4,0.8) -- (2.4,0.8);
\draw [decorate,decoration=zigzag] (2.4,0) -- (2.4, 0.8);
\draw (0,0) -- (0.4, 0.8);

\fill (0.6,0.6) circle (0.6mm) node[right] {$P(1)$};
\fill (0.4,0.2) circle (0.6mm) node[right] {$P(0)$};

\draw [thick,decorate,decoration={brace,mirror,raise=10pt}] (0,0) -- (2.4,0)
		node[pos=0.55,anchor=north,yshift=-0.55cm] {postprojectives};

\draw (3.2,0.4) circle (5mm);

\draw (4.0,0) -- (6,0);
\draw (4,0.8) -- (6.4,0.8);
\draw [decorate,decoration=zigzag] (4.0,0) -- (4.0, 0.8);
\draw (6,0) -- (6.4, 0.8);

\draw [thick,decorate,decoration={brace,mirror,raise=10pt}] (4,0) -- (6.4,0)
		node[pos=0.5,anchor=north,yshift=-0.55cm] {preinjectives};

\fill (6,0.6) circle (0.6mm) node[left] {$I(1)$};
\fill (5.8,0.2) circle (0.6mm) node[left] {$I(0)$};
\end{tikzpicture}
\end{center}

\noindent where $P(0)$ and $P(1)$ are the indecomposable projective modules and $I(0)$ and $I(1)$ are the indecomposable injective modules. The central circle represents the regular components, which consists of a $\bP^{1}$-indexed family of stable homogeneous tubes.

A result of Happel implies that each object of the derived category $\sD^{b}(\bK Q)$ of $\mod{\bK Q}$ decomposes as a direct sum of its cohomology; for details see \cite[Lemma I.5.2 and Corollary I.5.3]{Happel}. Hence the AR quiver of the derived category can be sketched as:

\

\begin{center}
\begin{tikzpicture}
\draw (0,0) -- (2.4,0);
\draw (0,0.8) -- (2.4,0.8);
\draw [decorate,decoration=zigzag] (0,0) -- (0, 0.8);
\draw [decorate,decoration=zigzag] (2.4,0) -- (2.4, 0.8);
\draw (1,0) -- (1.4, 0.8);

\fill (1.6,0.6) circle (0.6mm);
\fill (1.4,0.2) circle (0.6mm);

\draw (3.2,0.4) circle (5mm);

\draw[thick,decorate,decoration={brace,mirror,raise=10pt}] (1.2,0) -- (5,0) node[pos=0.5,anchor=north,yshift=-0.75cm] {degree -1};

\draw (4.0,0) -- (6.4,0);
\draw (4,0.8) -- (6.4,0.8);
\draw [decorate,decoration=zigzag] (4.0,0) -- (4.0, 0.8);
\draw [decorate,decoration=zigzag] (6.4,0) -- (6.4, 0.8);
\draw (5,0) -- (5.4, 0.8);

\fill (5.6,0.6) circle (0.6mm);
\fill (5.4,0.2) circle (0.6mm);

\draw[thick,decorate,decoration={brace,mirror,raise=10pt}] (5.2,0) -- (9,0) node[pos=0.5,anchor=north,yshift=-0.75cm] {degree 0};

\draw (7.2,0.4) circle (5mm);

\draw (8.0,0) -- (10.4,0);
\draw (8.0,0.8) -- (10.4,0.8);
\draw [decorate,decoration=zigzag] (8.0,0) -- (8.0, 0.8);
\draw [decorate,decoration=zigzag] (10.4,0) -- (10.4, 0.8);
\draw (9,0) -- (9.4, 0.8);

\fill (9.6,0.6) circle (0.6mm);
\fill (9.4,0.2) circle (0.6mm);

\draw (11.2,0.4) circle (5mm);

\draw[thick,decorate,decoration={brace,mirror,raise=10pt}] (9.2,0) -- (13,0) node[pos=0.5,anchor=north,yshift=-0.75cm] {degree 1};

\draw (12.0,0) -- (14.4,0);
\draw (12.0,0.8) -- (14.4,0.8);
\draw [decorate,decoration=zigzag] (12.0,0) -- (12.0, 0.8);
\draw [decorate,decoration=zigzag] (14.4,0) -- (14.4, 0.8);
\draw (13,0) -- (13.4, 0.8);

\fill (13.6,0.6) circle (0.6mm);
\fill (13.4,0.2) circle (0.6mm);
\end{tikzpicture}
\end{center}

\noindent In each degree we have indicated the indecomposable projective modules. Sometimes it will be useful to use the following notation.

\begin{notation} \label{not:non-regulars}
We set 
$$P_{t} =
\left\{
\begin{array}{ll}
\tau^{-\frac{t}{2}}P(0) & \mbox{ if $t$ is even,} \\
\tau^{-\frac{t-1}{2}}P(1) & \mbox{ if $t$ is odd,}
\end{array} \right.
\quad \mbox{and}\quad
I_{t} =
\left\{
\begin{array}{ll}
\tau^{\frac{t}{2}}I(1) & \mbox{ if $t$ is even,} \\
\tau^{\frac{t-1}{2}}I(0) & \mbox{ if $t$ is odd,}
\end{array} \right.
$$
where $\tau$ is the AR translation of $\sD^{b}(\bK Q)$. Sometimes the following notation will be useful:
$$
N_{i} := \left\{
\begin{array}{ll}
\Sigma^{-1}I_{-i} & {\rm for }\ i\leq 0, \\
P_{i-1} & {\rm for }\ i>0.
\end{array}
\right.
$$
This corresponds to considering the `geometric heart' consisting of coherent sheaves on $\bP^{1}$. Indeed, there is a bijection $N_{i}\mapsto\cO(i)$, where $\cO(i)$ is the line bundle of degree $i$. The regular modules then correspond to torsion sheaves. This is formalised in the Beilinson equivalence \cite{Beilinson}.\end{notation}

The indecomposable objects of $\sD^{b}(\bK Q)$ are simply the stalk complexes of indecomposable $\bK Q$-modules, i.e. indecomposable $\bK Q$-modules considered as complexes concentrated in one degree. We shall use the following terminology: an \emph{indecomposable regular object} is simply the stalk complex of an indecomposable regular module; analogously for an \emph{indecomposable non-regular object}.

We now briefly recall some standard facts about $\sD^{b}(\bK Q)$ that will be useful later.

\begin{facts} \label{facts}
Let $R$ and $R'$ be indecomposable regular objects sitting in different tubes of $\sD^{b}(\bK Q)$ and $N_{i}$ be as above. Then
\begin{enumerate}
\item $\Hom{\sD^{b}(\bK Q)}{R}{\Sigma R}=\bK$.
\item $\Hom{\sD^{b}(\bK Q)}{R}{R'}=\Hom{\sD^{b}(\bK Q)}{R}{\Sigma R'}=0$.
\item $\Hom{\sD^{b}(\bK Q)}{N_{i}}{\Sigma N_{i-1}}=0$.
\end{enumerate}
If $P$ is an indecomposable postprojective object and $I$ is an indecomposable preinjective object sitting in the same degree as $R$, then:
\begin{enumerate}
\item[(4)] There are non-zero maps $\Sigma^{-1}R\rightarrow P$ and $P\rightarrow R$.
\item[(5)] There are non-zero maps $R\rightarrow I$ and $I\rightarrow \Sigma R$.
\end{enumerate}
\end{facts}

The following distinguished triangles are well-known and in geometry they are often know as the `Euler sequence'. They can be computed explicitly by taking projective resolutions of the indecomposable modules.

\begin{triangles} \label{triangles}
Denote by $R_{x,d}$ the indecomposable regular module of regular length $d\geq 1$ indexed by the point $x\in\bP^{1}$. The standard triangles are given below:
$$
\left.\begin{array}{l}
P_{1}^{\oplus d} \rightarrow R_{x,d}\rightarrow (\Sigma P_{0})^{\oplus d} \rightarrow \Sigma P_{1}^{\oplus d}, \\ 
P_{1}^{\oplus t} \rightarrow P_{t}\rightarrow (\Sigma P_{0})^{\oplus t-1} \rightarrow \Sigma P_{1}^{\oplus t}, \\
P_{1}^{\oplus s+1} \rightarrow I_{s}\rightarrow (\Sigma P_{0})^{\oplus s+2} \rightarrow \Sigma P_{1}^{\oplus s+1}, 
\end{array}\right.
$$
where $t\geq 2$ and $s\geq 0$.
\end{triangles}

\begin{remark}
Note that the cones and co-cones of morphisms in $\sD^{b}(\bK Q)$ are easily computed. For example, if $X$ and $Y$ are objects concentrated in the same degree and $f:X\rightarrow Y$ is a morphism, then, since $\bK Q$ is hereditary the cone of $f$ is the object $\coker f\oplus\Sigma(\ker f)$. This, together with the fact there are no morphisms of degree higher than one, can be used to compute any cone or co-cone.
\end{remark}


\section{Exceptional co-slicings of $\sD^{b}(\bK Q)$}\label{sec:exceptional}

The category $\sD^{b}(\bK Q)$ is classically generated by the exceptional pairs $\{N_{n},N_{n+1}\}$ for $n\in\bZ$ and their suspensions. It is well known that these are all the exceptional pairs in $\sD^b(\bK Q)$. Without loss of generality, we only explain the construction of exceptional co-slicings using the exceptional pair $\{P_{0},P_{1}\}$; the others are obtained analogously. The construction is based on the observation of Proposition \ref{prop:bounded_co-slicing} and proceeds along the lines of the corresponding construction in \cite{GKR}, so we give few details. 

We would like to define the co-slicing $(\cE,\{\cQ(\epsilon)\,|\,\epsilon\in\cE\})$ of $\sD^{b}(\bK Q)$ by:
\begin{itemize}
\item $\cE=\bZ\times\{0,1\}$;
\item $\cQ((n,0))=\add{\Sigma^{n}P_{0}}$ and $\cQ((n,1))=\add{\Sigma^{n}P_{1}}$.
\end{itemize}
We start with the following lemma.

\begin{lemma} \label{lem:linear_order}
The only admissible linear orderings of $\cE=\bZ\times\{0,1\}$ with an automorphism $\eta:\cE\rightarrow\cE$ which is compatible with the suspension applied to the co-slices above, i.e. satisfying $\eta(n,-)=(n+1,-)$, are given by:
\begin{enumerate}
\item We have $(n,i)<(m,i)$ for $i\in\{0,1\}$ and $n<m$, and $(n,1)<(m,0)$ for any $n,m\in\bZ$.
\item There exists $p\in\bN$ such that
$$\cdots<(n+p-1,1)<(n,0)<(n+p,1)<(n+1,0)<(n+p+1,1)<(n+2,0)<\cdots.$$
\end{enumerate}
We shall denote the set $\cE$ together with one of the possible linear orderings above by $\cE_{p}$ with $p\in\bN\cup\{\infty\}$, where if $p\in\bN$ the linear ordering corresponds to one of those occurring in (ii), and $p=\infty$ corresponds to the linear ordering given in (i).
\end{lemma}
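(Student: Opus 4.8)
Since the automorphism is prescribed here, $\eta(n,i)=(n+1,i)$, the plan is to read off what the co-slicing axioms of Definition~\ref{def:co-slicing} force and then to classify. Axiom (1) asks $\eta$ to be an automorphism of ordered sets with $\eta(\phi)\geq\phi$; applied to $(n,i)$ this gives $(n,i)\leq(n+1,i)$, hence $(n,i)<(n+1,i)$ as the two are distinct, so the order restricted to each row $\bZ\times\{0\}$ and $\bZ\times\{1\}$ is the standard order on $\bZ$. Being an order automorphism, $\eta$ also makes the truth value of ``$(n,1)<(m,0)$'' invariant under $(n,m)\mapsto(n+1,m+1)$, so it depends only on $m-n$; this translation invariance is what produces the repeating block in (ii).

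Next I would extract the consequence of axiom (2). Since $P_0$ and $P_1$ are projective $\bK Q$-modules regarded as stalk complexes and $\bK Q$ is hereditary, $\Hom{\sD^{b}(\bK Q)}{\Sigma^n P_i}{\Sigma^m P_j}$ vanishes whenever $n\neq m$, while for $n=m$ it equals $\bK$ if $i=j$, equals $\bK^2$ if $(i,j)=(0,1)$, and equals $0$ if $(i,j)=(1,0)$; this can also be read off from the Beilinson picture $P_0\leftrightarrow\cO(1)$, $P_1\leftrightarrow\cO(2)$ of Notation~\ref{not:non-regulars} together with line-bundle cohomology on $\bP^1$. Substituting this into ``$\phi_1<\phi_2\Rightarrow\Hom{\sD^{b}(\bK Q)}{\cQ(\phi_1)}{\cQ(\phi_2)}=0$'', the only new restriction beyond those already coming from axiom (1) is that $(n,0)<(n,1)$ is forbidden; equivalently, $(n,1)<(n,0)$ for every $n\in\bZ$.

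For the classification, which is now purely order-theoretic, I would argue as follows. Put $S=\{\,d\in\bZ\mid(0,1)<(d,0)\,\}$, so $(n,1)<(m,0)\iff m-n\in S$. Because row $0$ carries the standard order, $S$ is up-closed in $\bZ$; because $(0,1)<(0,0)$, we have $0\in S$. Hence either $S=\bZ$, or $S=\{\,d\in\bZ\mid d\geq c\,\}$ for a unique integer $c\leq 0$. If $S=\bZ$, every element of row $1$ lies below every element of row $0$, which is ordering (i), i.e.\ $p=\infty$. If $S=\{d\geq c\}$, set $p:=1-c\in\{1,2,3,\dots\}=\bN$; the relations $(n,1)<(n+c,0)$ and $(n+c-1,0)<(n,1)$, together with the standard orders on the two rows, give $(n+p-1,1)<(n,0)<(n+p,1)$ for every $n$, which is ordering (ii) with parameter $p$. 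Conversely, each $\cE_p$ with $p\in\bN\cup\{\infty\}$ is admissible: axiom (1) is built in; axiom (2) holds since the only possibly nonzero $\mathsf{Hom}$ between distinct co-slices is $\Hom{\sD^{b}(\bK Q)}{\Sigma^n P_0}{\Sigma^n P_1}$ and $(n,1)<(n,0)$ holds in every $\cE_p$; and if axiom (3) is also asked of ``admissible'', the Standard triangles~\ref{triangles} place every indecomposable object, and hence (by Happel's decomposition \cite[Corollary I.5.3]{Happel} and gluing of HN towers as in \cite[Proposition 4.3]{GKR}) every object, inside a tower of length at most two with graded pieces in $\add{\Sigma^k P_1}$ and $\add{\Sigma^{k+1}P_0}$, and $(k,1)<(k+1,0)$ holds in every $\cE_p$.

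\emph{Main obstacle.} The two points needing care are: (a) the $\mathsf{Hom}$ computation, and in particular that there is no $\mathsf{Ext}^1$ between $P_0$ and $P_1$ in either direction, since this is exactly what leaves $(n,1)<(n,0)$ as the \emph{sole} constraint and so forces a one-parameter family rather than a larger class; and (b) the bookkeeping converting the abstract up-set $S$ into the explicit chain of (ii) with the indexing $\bN=\{1,2,\dots\}$, which reduces to noting that $0\in S$ is equivalent to $c\leq 0$, i.e.\ $p\geq 1$. The shape of $S$, and the check that each $\cE_p$ really defines a co-slicing, are then routine given the $\mathsf{Hom}$ computation and the Standard triangles.
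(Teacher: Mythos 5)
Your proposal is correct and follows essentially the same route as the paper: axiom (1) forces the standard order on each row, the nonzero morphisms $P_0\to P_1$ together with axiom (2) force $(n,1)<(n,0)$, and the remaining interleavings are classified order-theoretically. Your up-set argument via $S$ makes the paper's final case split (``either such a $p$ exists or it does not'') more explicit, and the converse verification you add is deferred in the paper to Proposition~\ref{prop:exceptional}, but these are refinements of the same proof rather than a different one.
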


\begin{proof}
Definition \ref{def:co-slicing}$(1)$ means that the action of $\eta$ is determined by $\eta((n,i))=(n+1,i)$ for $i\in\{0,1\}$.
Since there are non-zero morphisms $P_{0}\rightarrow P_{1}$, we must have $(n,1) < (n, 0)$.  The determination of $\eta$ above means that $(n,i)<(m,i)$ for all integers $n<m$ and $i\in\{0,1\}$.

Consider the position of $(n+p,1)$ for some $p\in\bN$. Either, there is a $p\in\bN$ such that for each $n\in\bZ$ one has $(n+p,1)<(n+1,0)$ but $(n+p+1,1)>(n+1,0)$, or there is not. In the case that there is no such $p$, we obtain $(n,1)<(m,0)$ for all $n,m\in\bZ$ and lie in case (i) of the lemma. In case that there is such a $p\in\bN$, then one sees that this is case (ii) of the lemma.
\end{proof}

\begin{proposition} \label{prop:exceptional}
There are essential co-slicings $(\cE_{p},\{\cQ(\epsilon)\,|\,\epsilon\in\cE_{p}\})$ of $\sD^{b}(\bK Q)$ indexed by $p\in\bN\cup\{\infty\}$ defined as follows:
\begin{itemize}
\item $\cE_{p}=\bZ\times\{0,1\}$, with the linear order indexed by $p$ and automorphism $\eta:\cE_{p}\rightarrow\cE_{p}$ given by $\eta((n,i))=(n+1,i)$ for $n\in\bZ$ and $i\in\{0,1\}$.
\item $\cQ((n,0))=\add{\Sigma^{n}P_{0}}$ and $\cQ((n,1))=\add{\Sigma^{n}P_{1}}$.
\end{itemize}
Moreover, each $(\cE_{p},\{\cQ(\epsilon)\,|\,\epsilon\in\cE_{p}\})$ is a minimal essential co-slicing; see Remark \ref{rem:criterion}.
\end{proposition}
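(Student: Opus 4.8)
The plan is to verify, for each $p \in \bN \cup \{\infty\}$, the three axioms of Definition~\ref{def:co-slicing} for the proposed data, and then to establish minimality via the criterion of Remark~\ref{rem:criterion}. Since the co-slices $\cQ((n,i)) = \add{\Sigma^n P_i}$ are additive hulls of indecomposable objects, they are automatically closed under direct summands, and extension-closure will follow from the split-silting observation below (an extension of copies of $\Sigma^n P_i$ by copies of $\Sigma^n P_i$ is again a sum of copies of $\Sigma^n P_i$, because $\Hom{\sD^b(\bK Q)}{\Sigma^n P_i}{\Sigma^{n+1} P_i} = 0$ as $\bK Q$ is hereditary and $P_i$ has no self-extensions). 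Lemma~\ref{lem:linear_order} already tells us exactly which linear orderings admit the required automorphism $\eta$, so axiom~(1) is handled: $\eta((n,i)) = (n+1,i)$ is an order-automorphism in every case, and $\eta(\epsilon) \geq \epsilon$ holds because in both ordering types increasing the $\bZ$-coordinate by one strictly increases the position. Axiom~(2), the vanishing $\Hom{\sD^b(\bK Q)}{\cQ(\epsilon_1)}{\cQ(\epsilon_2)} = 0$ for $\epsilon_1 < \epsilon_2$, reduces by additivity to checking $\Hom{\sD^b(\bK Q)}{\Sigma^n P_i}{\Sigma^m P_j} = 0$ whenever $(n,i) < (m,j)$ in $\cE_p$; this is a finite case analysis using that the only nonzero morphism spaces between shifts of $P_0$ and $P_1$ are $\Hom{\sD^b(\bK Q)}{\Sigma^n P_0}{\Sigma^n P_0} = \Hom{\sD^b(\bK Q)}{\Sigma^n P_1}{\Sigma^n P_1} = \bK$, the two-dimensional space $\Hom{\sD^b(\bK Q)}{\Sigma^n P_1}{\Sigma^n P_0}$ coming from the arrows of $Q$, and the spaces $\Hom{\sD^b(\bK Q)}{\Sigma^n P_i}{\Sigma^{n+1} P_j}$ dictated by $\tau$ and the AR structure recalled in Section~\ref{sec:recap}; one then checks these nonvanishing morphisms all point in the direction compatible with the chosen order.

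The substantive step is axiom~(3): every $0 \neq X \in \sD^b(\bK Q)$ must admit an HN filtration with factors in the $\cQ(\epsilon)$ and strictly increasing phases. Here I would invoke Proposition~\ref{prop:bounded_co-slicing}: the pair $\{P_0, P_1\}$ (more precisely $\add{P_0 \oplus P_1}$) is a silting subcategory of $\sD^b(\bK Q)$ and hence, by \cite[Corollary 5.8]{MSSS}, is the co-heart of a bounded co-t-structure $(\sA,\sB)$; meanwhile $\Phi = \{0 < 1\}$ with $\cR(0) = \add{P_1}$, $\cR(1) = \add{P_0}$ is split-stability data on this co-heart (the orthogonality $\Hom{}{P_1}{P_0}$ versus $\Hom{}{P_0}{P_1} = 0$ gives axiom~(1), and since every object of $\add{P_0 \oplus P_1}$ splits as $P_0^a \oplus P_1^b$ one reads off the split HN filtration). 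Proposition~\ref{prop:bounded_co-slicing} then produces a co-slicing indexed by $\bZ \times \{0,1\}$ with lexicographic order and co-slices $\Sigma^n \cR(\phi)$ — which is precisely $(\cE_\infty, \{\cQ(\epsilon)\})$. For $p \in \bN$ one argues that the $\cE_p$-ordering is obtained from $\cE_\infty$ by a reshuffling that does not change which towers of triangles count as HN filtrations: concretely, given the $\cE_\infty$-HN filtration of $X$, its factors are certain $\Sigma^{n}P_0^{a_n} \oplus \Sigma^{n}P_1^{b_n}$ with strictly increasing $n$, and one splits each such factor into its $\Sigma^n P_1$-part followed by its $\Sigma^n P_0$-part and then re-interleaves the resulting finite list according to the $\cE_p$-order; because the pieces being reordered sit in distinct one-dimensional co-slices and the reorder is finite, one can repeatedly rotate adjacent split triangles (using \cite[Lemmas 7.1 and 7.2]{Co-stability} or the octahedral axiom, exactly as in Corollary~\ref{cor:silting-bounded}) to realise the $\cE_p$-ordered tower. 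The phases strictly increase by construction, and no factor is zero after deleting trivial steps, so axiom~(3) holds; essentiality is then immediate since for each $\epsilon = (n,i)$ the object $\Sigma^n P_i$ is a nonzero element of $\cQ(\epsilon)$.

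Finally, minimality among essential co-slicings follows from Remark~\ref{rem:criterion} once we check that for every $\epsilon \in \cE_p$ and all nonzero $x, y \in \cQ(\epsilon)$ we have $\Hom{\sD^b(\bK Q)}{x}{y} \neq 0$ and $\Hom{\sD^b(\bK Q)}{y}{x} \neq 0$. But $\cQ((n,i)) = \add{\Sigma^n P_i}$, so $x \cong (\Sigma^n P_i)^a$ and $y \cong (\Sigma^n P_i)^b$ with $a, b \geq 1$, whence $\Hom{\sD^b(\bK Q)}{x}{y} \cong \bK^{ab} \neq 0$ and symmetrically for $\Hom{\sD^b(\bK Q)}{y}{x}$, using $\Endo{\Sigma^n P_i} \cong \bK$. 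I expect the main obstacle to be the bookkeeping in axiom~(3) for finite $p$: one must check carefully that the finite re-interleaving of the split sub-triangles can actually be carried out while keeping every intermediate object of the tower well-defined and every factor in the correct co-slice, and that the wiggly (degree-one) maps behave as required — this is where \cite[Lemmas 7.1 and 7.2]{Co-stability} and the octahedral axiom do the real work, and where the argument most closely parallels, but is not identical to, the construction in \cite{GKR}.
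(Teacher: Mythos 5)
Your overall strategy is viable and, for the reordering step, close in spirit to what the paper does, but it contains one concrete error that you must repair. Proposition~\ref{prop:bounded_co-slicing}, applied to the silting subcategory $\add{P_{0}\oplus P_{1}}$ with the split-stability data $\cR(0)=\add{P_{1}}$, $\cR(1)=\add{P_{0}}$, produces the \emph{lexicographic} order on $\bZ\times\{0,1\}$, in which the slices appear as $\cdots<\add{\Sigma^{n}P_{1}}<\add{\Sigma^{n}P_{0}}<\add{\Sigma^{n+1}P_{1}}<\add{\Sigma^{n+1}P_{0}}<\cdots$. In the notation of Lemma~\ref{lem:linear_order} this is $\cE_{1}$, not $\cE_{\infty}$: in $\cE_{\infty}$ \emph{every} $(n,1)$ precedes \emph{every} $(m,0)$, so the order type is $\bZ+\bZ$ rather than the interleaved one. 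Relatedly, the filtration whose factors are $\Sigma^{n}P_{0}^{a_{n}}\oplus\Sigma^{n}P_{1}^{b_{n}}$ with strictly increasing $n$ is the weight filtration of the bounded co-t-structure, not the $\cE_{\infty}$-HN filtration. A second slip: you assert that the two-dimensional Hom-space is $\Hom{\sD^{b}(\bK Q)}{P_{1}}{P_{0}}$ and that $\Hom{\sD^{b}(\bK Q)}{P_{0}}{P_{1}}=0$; the paper's convention (used in the proof of Lemma~\ref{lem:linear_order} to force $(n,1)<(n,0)$, and needed for your own split-stability data on the co-heart to satisfy its orthogonality axiom) is the opposite. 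Both slips are repairable: starting from $\cE_{1}$, every re-interleaving needed to reach $\cE_{p}$ for $p\geq 2$ or $p=\infty$ moves a suspension of $P_{1}$ earlier past a suspension of $P_{0}$, and the relevant obstruction groups $\Hom{\sD^{b}(\bK Q)}{\Sigma^{m}P_{1}}{\Sigma^{n+1}P_{0}}$ all vanish, so the swaps of \cite[Lemmas 7.1 and 7.2]{Co-stability} go through.

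For comparison, the paper's proof bypasses Proposition~\ref{prop:bounded_co-slicing} entirely: for an indecomposable object $X$ in degree $n$ it reads off a two-step HN filtration directly from the Standard Triangles~\ref{triangles}, with factors $\Sigma^{n}P_{1}^{\oplus a}$ and $(\Sigma^{n+1}P_{0})^{\oplus b}$. Since $(n,1)<(n+1,0)$ holds in every admissible order $\cE_{p}$, this single filtration works for all $p$ simultaneously, and only decomposable objects require the reordering lemmas. Your route buys uniformity with the general machinery of Section~\ref{sec:basic} but at the cost of the bookkeeping you rightly identify as the main obstacle; the paper's route makes the finite-$p$ cases essentially free. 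Your verifications of axioms (1) and (2), of essentiality, and of minimality via Remark~\ref{rem:criterion} are fine (modulo the Hom-direction convention noted above).
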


\begin{proof}
We only need to obtain Harder-Narasimhan filtrations. Consider an indecomposable object $X$ in degree $n$. Transitivity of the linear order means that we require $(n,1)<(n+1,0)$. Consider the distinguished triangle,
\begin{equation}
\Sigma^{n}P_{1}^{\oplus a}\rightarrow X\rightarrow(\Sigma^{n+1}P_{0})^{\oplus b}\rightarrow\Sigma^{n+1}P_{1}^{\oplus a},
\label{eqn:HN1}
\end{equation}
where $a,b\geq 0$ are chosen so that \eqref{eqn:HN1} is one of the Standard Triangles \ref{triangles}. This can then be re-written into the form of an HN filtration:
$$\xymatrix{0\ar[r] & \Sigma^{n}P_{1}^{\oplus a}\ar[r]\ar@{=}[d] & X\ar[d] \\
 & \Sigma^{n}P_{1}^{\oplus a}\ar@{~>}[ul] & (\Sigma^{n+1} P_{0})^{\oplus b}.\ar@{~>}[ul]}$$
Now, for $X$ not indecomposable, \cite[Lemmas 7.1 and 7.2]{Co-stability} permit the necessary re-ordering of HN quotients in order to correspond with the linear order on the co-slices.
\end{proof}

\begin{example}
Note that the behaviour of HN filtrations in the co-slicing setting is different from that for slicings. In particular, HN filtrations here are not necessarily unique. As a concrete example, taking $X=R_{x,1}$, we get the following `non-canonical' HN filtration of $R_{x,1}$ in addition to the one described above:
$$\xymatrix{0\ar[r] & P_{1}\ar[r]\ar@{=}[d] & P_{0}\oplus P_{1}\ar[r]\ar[d] & R_{x,1} = X \ar[d]. \\
 & P_{1}\ar@{~>}[ul] & P_{0}\ar@{~>}[ul]^-{0} & (\Sigma P_{0})^{\oplus 2}\ar@{~>}[ul]}$$
\end{example}

Mirroring the terminology of \cite{GKR}, we shall call the co-slicings of Proposition \ref{prop:exceptional} \emph{exceptional co-slicings}. Note the absence of a co-slicing mirror image of the `standard slicings' of \cite{GKR}; this is explained below in Proposition \ref{prop:no_regulars}.


\section{Classification of co-slicings of $\sD^{b}(\bK Q)$}\label{sec:co-slice_class}

In this section we obtain the classification of co-slicings of $\sD^{b}(\bK Q)$ analogous to that of slicings in \cite{GKR}. Note that here the proof differs significantly from that in \cite{GKR}. We start by making the following definition.

\begin{definition}
Let $(\Phi,\{\cQ(\phi)\,|\, \phi\in\Phi\})$ be a co-slicing of a triangulated category $\sT$. It is called a \emph{trivial co-slicing} if there exists $\phi\in\Phi$ such that $\cQ(\phi)=\sT$.
\end{definition}

\begin{proposition} \label{prop:no_regulars}
Suppose $(\Phi,\{\cQ(\phi)\,|\, \phi\in\Phi\})$ is a non-trivial co-slicing of $\sD^{b}(\bK Q)$ with automorphism $\lambda:\Phi\rightarrow\Phi$. An indecomposable regular object of $\sD^{b}(\bK Q)$ is not semistable with respect to $(\Phi,\{\cQ(\phi)\,|\, \phi\in\Phi\})$.
\end{proposition}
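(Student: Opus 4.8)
The plan is to suppose, for contradiction, that some indecomposable regular object $R$ is semistable, say $R\in\cQ(\phi_{0})$ for some $\phi_{0}\in\Phi$, and then to show that this forces $\cQ(\phi_{0})=\sD^{b}(\bK Q)$, contradicting non-triviality. The key inputs are Proposition~\ref{prop:silting} (each co-slice is either stable under $\Sigma$ or partial silting) together with the explicit $\Hom$-computations in Standard facts~\ref{facts} and the Standard triangles~\ref{triangles}.

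First I would run the dichotomy of Proposition~\ref{prop:silting} on $\cQ(\phi_{0})$. If $\cQ(\phi_{0})$ is partial silting, then $\Hom{\sD^{b}(\bK Q)}{R}{\Sigma R}=0$ would be forced; but Standard facts~\ref{facts}(1) says $\Hom{\sD^{b}(\bK Q)}{R}{\Sigma R}=\bK\neq 0$, a contradiction. Hence $\cQ(\phi_{0})$ must be stable under suspensions and desuspensions, i.e.\ $\Sigma\cQ(\phi_{0})=\cQ(\phi_{0})$, so $\lambda(\phi_{0})=\phi_{0}$ and $\cQ(\phi_{0})$ is extension-closed and closed under $\Sigma^{\pm 1}$. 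This means $\Sigma^{i}R\in\cQ(\phi_{0})$ for all $i\in\bZ$.

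Next I would use the Standard triangles to bootstrap: from the triangle $P_{1}^{\oplus d}\rightarrow R_{x,d}\rightarrow(\Sigma P_{0})^{\oplus d}\rightarrow\Sigma P_{1}^{\oplus d}$ (with $R=R_{x,d}$), together with $\Sigma^{i}R\in\cQ(\phi_{0})$, I want to deduce that $P_{0}$ and $P_{1}$ (and all their suspensions) also lie in $\cQ(\phi_{0})$. The mechanism is semistability of $P_{0},P_{1}$ (every object has an HN filtration) combined with the orthogonality axiom Definition~\ref{def:co-slicing}(2): using Standard facts~\ref{facts}(4) there are non-zero maps $\Sigma^{-1}R\rightarrow P_{0}$-type objects and $P_{1}$-type objects $\rightarrow R$ in the appropriate degrees, which pin the phases of the HN quotients of $P_{0}$ and $P_{1}$ against $\phi_{0}$ from both sides, forcing them equal to $\phi_{0}$; one may also invoke the triangles directly with extension-closure of $\cQ(\phi_{0})$ once enough phases are aligned. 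Having $P_{0},P_{1}$ and their suspensions in $\cQ(\phi_{0})$, the remaining standard triangles put every $P_{t}$ and every $I_{s}$ (and their suspensions) into $\cQ(\phi_{0})$ by extension-closure, and then the triangle for a general $R_{x,d}$ puts \emph{every} indecomposable regular into $\cQ(\phi_{0})$. Since every indecomposable object of $\sD^{b}(\bK Q)$ is a stalk complex of an indecomposable module and $\cQ(\phi_{0})$ is closed under direct summands and (via the extension argument, finite) direct sums, we get $\cQ(\phi_{0})=\sD^{b}(\bK Q)$, contradicting non-triviality.

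The main obstacle I anticipate is the bootstrapping step: carefully arranging the order in which objects are shown to lie in $\cQ(\phi_{0})$ so that at each stage one only uses extension-closure between objects \emph{already known} to be in $\cQ(\phi_{0})$, and correctly exploiting Standard facts~\ref{facts}(4)--(5) to force the phases of $P_{0}$ and $P_{1}$. A secondary subtlety is making sure we never need a morphism of degree $>1$ (harmlessly true since $\bK Q$ is hereditary) and that the regular object $R$ we start from can sit in an arbitrary degree, which is handled by applying $\Sigma^{\pm 1}$ and using $\Sigma\cQ(\phi_{0})=\cQ(\phi_{0})$.
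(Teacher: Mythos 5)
Your opening step (using Proposition \ref{prop:silting} and $\Hom{\sD^{b}(\bK Q)}{R}{\Sigma R}=\bK$ to conclude $\Sigma\cQ(\phi_{0})=\cQ(\phi_{0})$) agrees with the paper. The gap is in the bootstrapping step. The orthogonality axiom, Definition \ref{def:co-slicing}(2), applied to a nonzero map $P_{0}\to R$ only tells you that \emph{some} HN quotient $q$ of $P_{0}$ satisfies $\Hom{\sD^{b}(\bK Q)}{q}{R}\neq 0$ and hence has phase $\geq\phi_{0}$; likewise a nonzero map $\Sigma^{-1}R\to P_{0}$ shows that \emph{some} (possibly different) quotient has phase $\leq\phi_{0}$. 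These two bounds point outward --- they are compatible with $P_{0}$ having quotients of phases strictly above and strictly below $\phi_{0}$ and none equal to $\phi_{0}$ --- so they do not pin the HN quotients of $P_{0}$ (or $P_{1}$) into $\cQ(\phi_{0})$. Moreover, the parenthetical identification of ``every object has an HN filtration'' with semistability of $P_{0},P_{1}$ is not valid: semistable means lying in a single co-slice. Without $P_{0},P_{1}\in\cQ(\phi_{0})$ the rest of your bootstrap has nothing to start from.

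What the plan is missing is precisely the case the paper has to treat separately: it is a priori possible that \emph{no} non-regular indecomposable object is semistable, so that every co-slice consists only of regular objects. The paper first shows that \emph{if} some non-regular $N_{t}$ is semistable, then it must lie in $\cQ(\phi_{0})$, and then ($\Sigma$-stability plus the standard triangle $\tri{N_{t}^{\oplus d}}{N_{t+1}^{\oplus d}}{R}$ and extension/summand closure) the co-slicing is trivial --- this is essentially your bootstrap, but run from the \emph{hypothesis} that a non-regular object is semistable rather than from an attempted proof of that fact. It then disposes of the remaining case by noting that the regular components are extension closed, so if all co-slices were regular, an indecomposable non-regular object could not admit any HN filtration, contradicting Definition \ref{def:co-slicing}(3). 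You need this second argument (or a genuine substitute); the Hom-pinning as you describe it will not deliver $P_{0},P_{1}\in\cQ(\phi_{0})$.
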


\begin{proof}
Suppose that an indecomposable regular object $R$ is semistable of phase $\phi$, i.e. $R\in\cQ(\phi)$. Assume also that, considered as a $\bK Q$-module, $R$ has regular length $d\in\bN$. From Standard Facts \ref{facts}, we have 
$\Hom{\sD^{b}(\bK Q)}{R}{\Sigma R} \cong \Ext{1}{\bK Q}(R,R) \cong \bK$.
Hence, we have $\phi\geq \lambda(\phi)$. By Definition \ref{def:co-slicing}$(1)$, we have $\phi\leq\lambda(\phi)$, so that  $\phi = \lambda(\phi)$. In particular, $\Sigma^{n} R\in\cQ(\phi)$ for all $n\in\bZ$ so that $\cQ(\phi)$ is not partial silting. Thus, by Proposition \ref{prop:silting} we get $\cQ(\phi)=\Sigma\cQ(\phi)$.

Consider an indecomposable non-regular object $N_{t}$ for $t\in\bZ$. If $N_{t}\in\cQ(\phi)$ then $\Sigma^{n} N_{t}\in\cQ(\phi)$ for all $n\in\bZ$ since $\cQ(\phi)=\Sigma\cQ(\phi)$. By taking the appropriate (de)suspension of $N_{t}$ and re-labelling so that $N_{t}$ and $R$ sit in the same degree, we obtain the following standard triangle:
$\tri{N_{t}^{\oplus d}}{N_{t+1}^{\oplus d}}{R}$. 
By closure under extensions and direct summands we find that $N_{t+1}\in\cQ(\phi)$.
Extension closure now gives that each indecomposable non-regular object belongs to the co-slice $\cQ(\phi)$, so that $(\Phi,\{\cQ(\phi)\,|\, \phi\in\Phi\})$ is a trivial co-slicing of $\sD^{b}(\bK Q)$; a contradiction. Therefore, in this case, an indecomposable non-regular object cannot be semistable.

Hence, we must have that the co-slices of $(\Phi,\{\cQ(\phi)\,|\, \phi\in\Phi\})$ consist of only regular objects. The regular components of a representation-tame hereditary algebra are extension closed. In particular, any map from an indecomposable non-regular object to a (not necessarily indecomposable) regular object must have a co-cone with a non-regular summand. Therefore, if the semistable components are required to be regular, then one cannot write down an HN filtration of an indecomposable non-regular object. Hence Definition \ref{def:co-slicing}$(3)$ is not satisfied, contradicting the assumption that $(\Phi,\{\cQ(\phi)\,|\, \phi\in\Phi\})$ is a co-slicing. Hence $R\notin \cQ(\phi)$ for any $\phi\in\Phi$.
\end{proof}

\begin{proposition} \label{prop:co-slice_form}
Suppose $(\Phi,\{\cQ(\phi)\,|\, \phi\in\Phi\})$ is a non-trivial co-slicing of $\sD^{b}(\bK Q)$ with automorphism $\lambda:\Phi\rightarrow\Phi$. A co-slice $\cQ(\phi)$ has either one, two or countably many indecomposable objects. The possibilities are as follows:
\begin{enumerate}[label=(\roman*),leftmargin=*]
\item If a co-slice $\cQ(\phi)$ has precisely one indecomposable object then $\ind{\cQ(\phi)} = \{\Sigma^{n}N_{t}\}$ for some integers $n$ and $t$. \label{item:one}
\item If a co-slice $\cQ(\phi)$ has precisely two indecomposable objects then $\ind{\cQ(\phi)} = \{\Sigma^{n}N_{t},\Sigma^{n+p}N_{t+1}\}$ for some integers $n$ and $t$ and some $p\in \bN\cup\{0\}$. \label{item:two}
\item If a co-slice $\cQ(\phi)$ has countably many indecomposable objects then $\ind{\cQ(\phi)} = \{\Sigma^{n}N_{t}\,|\, n\in\bZ\}$ for some integer $t$. \label{item:countable}
\end{enumerate}
\end{proposition}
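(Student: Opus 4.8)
The plan is to combine Proposition~\ref{prop:no_regulars}, Proposition~\ref{prop:silting}, and a computation of the graded Hom-spaces between non-regular indecomposable objects of $\sD^{b}(\bK Q)$. We may assume $\cQ(\phi)\neq 0$. Since the co-slicing is non-trivial, Proposition~\ref{prop:no_regulars} forces every indecomposable object of a co-slice $\cQ(\phi)$ to be non-regular, hence isomorphic to $\Sigma^{n}N_{t}$ for some $n,t\in\bZ$; and as $\cQ(\phi)$ is closed under direct summands and extensions --- hence under finite direct sums, by taking split extensions --- it equals the additive closure of $\ind{\cQ(\phi)}$. I would also record, via the Beilinson equivalence of Notation~\ref{not:non-regulars}, the Hom-computation $\Hom{\sD^{b}(\bK Q)}{N_{t}}{\Sigma^{k}N_{t'}}\cong H^{k}(\bP^{1},\cO(t'-t))$, which vanishes unless $k\in\{0,1\}$, is non-zero for $k=0$ precisely when $t'\geq t$, and is non-zero for $k=1$ precisely when $t'\leq t-2$.

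By Proposition~\ref{prop:silting}, $\cQ(\phi)$ is either stable under suspensions or partial silting, and I would treat these two cases separately. Suppose first $\cQ(\phi)$ is stable under suspensions but contains indecomposables $\Sigma^{a}N_{t}$ and $\Sigma^{b}N_{t'}$ with $t<t'$. Desuspending, $N_{t}$ and $N_{t'}$ lie in $\cQ(\phi)$, and since $t'\geq t$ there is a non-zero morphism $f\colon N_{t}\to N_{t'}$; viewing $f$ as a non-zero map of line bundles $\cO(t)\to\cO(t')$ on $\bP^{1}$ with $t<t'$, it is injective with non-zero torsion cokernel, so its cone $R$ is a non-zero regular object. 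Rotating the triangle to $\tri{N_{t'}}{R}{\Sigma N_{t}}$ and using $\Sigma N_{t}\in\cQ(\phi)$ together with extension closure of $\cQ(\phi)$ forces $R\in\cQ(\phi)$, whence an indecomposable regular summand of $R$ lies in $\cQ(\phi)$, contradicting Proposition~\ref{prop:no_regulars}. Hence all indecomposables of $\cQ(\phi)$ share a single value of $t$, and stability under suspensions then yields $\ind{\cQ(\phi)}=\{\Sigma^{n}N_{t}\,|\,n\in\bZ\}$, which is case~\ref{item:countable}.

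The remaining case --- $\cQ(\phi)$ partial silting --- is where the combinatorics lives. Here extensions in $\cQ(\phi)$ split, so $\cQ(\phi)$ is the additive closure of a set of indecomposables $\Sigma^{n_{i}}N_{t_{i}}$, and the partial silting condition $\Hom{\sD^{b}(\bK Q)}{\Sigma^{n_{i}}N_{t_{i}}}{\Sigma^{m}N_{t_{j}}}=0$ for all $m>n_{j}$ translates, via the Hom-computation above applied in both orders, into numerical constraints on the pairs $(n_{i},t_{i})$. I would then check: (a) if two of these indecomposables have the same value of $t$, applying the condition both ways forces their suspension exponents to coincide, so there is at most one indecomposable for each value of $t$; (b) if two have values $t$ and $t'$ with $t'\geq t+2$ and suspension exponents $n$ and $n'$ respectively, then one direction of the condition gives $n\leq n'$ while the other --- using the non-vanishing of $\Hom{\sD^{b}(\bK Q)}{N_{t'}}{\Sigma N_{t}}$ when $t'\geq t+2$ --- gives $n'<n$, a contradiction, so the values of $t$ occurring are either a single $t$ or two consecutive integers $t,t+1$; and (c) for values $t$ and $t+1$ the only surviving constraint is that the exponent attached to $t+1$ is at least that attached to $t$. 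This leaves exactly $\ind{\cQ(\phi)}=\{\Sigma^{n}N_{t}\}$ (case~\ref{item:one}) or $\ind{\cQ(\phi)}=\{\Sigma^{n}N_{t},\Sigma^{n+p}N_{t+1}\}$ with $p\in\bN\cup\{0\}$ (case~\ref{item:two}).

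I expect the partial silting case to be the main obstacle: one must extract the correct inequalities from the partial-silting Hom-vanishing and, crucially, apply it in both orders, since it is exactly the asymmetry between $H^{0}$ and $H^{1}$ of line bundles on $\bP^{1}$ --- with $H^{0}(\bP^{1},\cO(d))\neq 0$ if and only if $d\geq 0$, but $H^{1}(\bP^{1},\cO(d))\neq 0$ if and only if $d\leq -2$ --- that produces the clash ruling out $t$-values that are far apart while permitting adjacent $t$-values with a free offset $p$. A secondary point requiring a little care is the claim used in the suspension-stable case that the cone of a non-zero morphism $N_{t}\to N_{t'}$ with $t<t'$ is a non-zero regular object; this follows from the Beilinson equivalence, or can be computed directly from the projective resolutions underlying the Standard Triangles~\ref{triangles}.
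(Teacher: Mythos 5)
Your proof is correct, and it shares the paper's overall skeleton --- Proposition~\ref{prop:no_regulars} to reduce every indecomposable of a co-slice to the form $\Sigma^{n}N_{t}$, and Proposition~\ref{prop:silting} to split into the suspension-stable and partial-silting cases --- but each case is then executed by genuinely different means. For the partial-silting case the paper simply invokes the known list of (partial) silting subcategories of $\sD^{b}(\bK Q)$ from \cite{AI}, whereas you rederive that list from the graded Hom-formula $\Hom{\sD^{b}(\bK Q)}{N_{t}}{\Sigma^{k}N_{t'}}\cong H^{k}(\bP^{1},\cO(t'-t))$; your bookkeeping is right (at most one suspension exponent per value of $t$; the clash $n\leq n'$ versus $n'<n$ when $t'\geq t+2$, coming from $H^{0}(\cO(d))\neq 0$ for $d\geq 0$ against $H^{1}(\cO(d))\neq 0$ for $d\leq -2$; and the unconstrained offset $p\geq 0$ when $t'=t+1$ because $H^{0}(\cO(-1))=H^{1}(\cO(-1))=0$), so your version is self-contained where the paper's is a citation. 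For the suspension-stable case the paper uses the Standard Triangles~\ref{triangles} to show that a second orbit of indecomposables would force $P_{0},P_{1}\in\cQ(\phi)$ and hence $\cQ(\phi)=\sD^{b}(\bK Q)$, contradicting non-triviality; you instead take the cone of a non-zero map $N_{t}\to N_{t'}$ with $t<t'$, observe it is a non-zero torsion sheaf, and use suspension-stability together with extension and summand closure to place an indecomposable regular object in $\cQ(\phi)$, contradicting Proposition~\ref{prop:no_regulars} directly. Both contradictions are valid; the non-triviality hypothesis is consumed in your argument only through Proposition~\ref{prop:no_regulars}, whereas the paper uses it a second time in this case.
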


\begin{proof}
Suppose $(\Phi,\{\cQ(\phi)\,|\, \phi\in\Phi\})$ is a non-trivial co-slicing of $\sD^{b}(\bK Q)$. By Proposition \ref{prop:silting} a non-empty co-slice $\cQ(\phi)$ is either stable under suspensions or partial silting.

Let $Q\in\cQ(\phi)$. Proposition \ref{prop:no_regulars} implies that $Q$ decomposes into a direct sum of indecomposable non-regular objects of $\sD^{b}(\bK Q)$. Without loss of generality we may assume that $P_{0}$ is a direct summand of $Q$. 

If $\cQ(\phi)=\add{P_{0}}$ then we are in case \ref{item:one} of the proposition and we are done.

So suppose that $\cQ(\phi)\neq\Sigma\cQ(\phi)$ and there is an indecomposable non-regular object $P_{0}\neq X\in\cQ(\phi)$. Examining the various possibilities for $X$, we obtain that $X$ is precisely one of $\Sigma^{i}P_{1}$ for some $i\geq 0$ or $\Sigma^{-j}I_{0}$ for some $j>0$. Note that we cannot add any further indecomposable non-regular objects since we require that $\cQ(\phi)$ is partial silting. (See, for example, \cite{AI} for a list of silting sets of the Kronecker algebra.) Hence, $\ind{\cQ(\phi)}$ belongs to item \ref{item:two} of the proposition.

Now assume $\cQ(\phi)=\Sigma\cQ(\phi)$, then we have $\{\Sigma^{n}P_{0}\,|\, n\in\bZ\}\subseteq\cQ(\phi)$. Suppose that there is a non-regular indecomposable object $X \in \cQ(\phi)$ which is not equal to any (de)suspension of $P_{0}$. Then, taking appropriate (de)suspensions we obtain that $P_{t}\in\cQ(\phi)$ for some $t>0$ or $\Sigma^{-1}I_{t}\in\cQ(\phi)$ for some $t\geq 0$. If $P_{t}\in\cQ(\phi)$, then there is a distinguished triangle 
$$\tri{P_{0}^{\oplus t-1}}{P_{1}^{\oplus t}}{P_{t}},$$
whence closure under extensions and direct summands yields $P_{1}\in\cQ(\phi)$. Now $P_{0}$ and $P_{1}$ generate $\sD^{b}(\bK Q)$, so that $\cQ(\phi)=\sD^{b}(\bK Q)$, contradicting the assumption that the co-slicing is non-trivial. Hence, we have $\cQ(\phi)=\{\Sigma^{n}P_{0}\,|\, n\in\bZ\}$, putting us in case \ref{item:countable}. Analogously if $X$ is a (de)suspension of an indecomposable preinjective module.
\end{proof}

\begin{lemma} \label{lem:two_objects}
Suppose $(\Phi,\{\cQ(\phi)\,|\, \phi\in\Phi\})$ is a non-trivial essential co-slicing of $\sD^{b}(\bK Q)$ with automorphism $\lambda:\Phi\rightarrow\Phi$. Suppose, for some $\phi\in\Phi$, we have $\ind{\cQ(\phi)} = \{\Sigma^{n}N_{t},\Sigma^{n+i}N_{t+1}\}$, where $n,t\in\bZ$ and $i\geq 0$. Then the co-slicing is completely specified by:
\begin{itemize}
\item $\Phi=\bZ$ and $\lambda(\phi)=\phi+1$; and,
\item $\cQ(\phi)=\add{\Sigma^{\phi}N_{t},\Sigma^{\phi+i}N_{t+1}}$.
\end{itemize}
\end{lemma}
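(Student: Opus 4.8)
The plan is to show that the presence of one co-slice of the asserted shape rigidly determines the whole co-slicing, the mechanism being the orthogonality axiom Definition~\ref{def:co-slicing}$(2)$ fed by standard $\Hom$-computations in $\sD^{b}(\bK Q)$. I would use freely the identification of Notation~\ref{not:non-regulars} ($N_{j}\leftrightarrow\cO(j)$ under the Beilinson equivalence), under which $\Homm{N_{a}}{N_{b}}\cong H^{0}(\bP^{1},\cO(b-a))$ and $\Homm{N_{a}}{\Sigma N_{b}}\cong H^{1}(\bP^{1},\cO(b-a))$; in particular $\Homm{N_{a}}{N_{b}}\neq 0$ exactly when $a\leq b$, and $\Homm{N_{a}}{\Sigma N_{b}}\neq 0$ exactly when $b\leq a-2$. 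I would also use throughout that, by axiom $(2)$ and $\id{x}\neq 0$, a non-zero object can lie in at most one co-slice.

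First I would normalise: replacing $\phi_{0}$ by $\lambda^{-n}(\phi_{0})$ we may assume $n=0$, so $\cQ(\phi_{0})=\add{N_{t},\Sigma^{i}N_{t+1}}$. From $\Sigma\cQ(\phi)=\cQ(\lambda(\phi))$ one gets $\Sigma^{m}N_{t}\in\cQ(\lambda^{m}(\phi_{0}))$ and $\Sigma^{m}N_{t+1}\in\cQ(\lambda^{m-i}(\phi_{0}))$ for all $m\in\bZ$. Since $\cQ(\phi_{0})$ has exactly two indecomposables it cannot satisfy $\Sigma\cQ(\phi_{0})=\cQ(\phi_{0})$, so $\lambda(\phi_{0})\neq\phi_{0}$, whence $\lambda(\phi_{0})>\phi_{0}$ by Definition~\ref{def:co-slicing}$(1)$; as $\lambda$ is an order automorphism, $\{\lambda^{m}(\phi_{0})\mid m\in\bZ\}$ is a strictly increasing bi-infinite chain, order-isomorphic to $\bZ$ with $\lambda$ acting as $\phi\mapsto\phi+1$.

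The heart of the proof is to show that the only semistable indecomposable objects of $\sD^{b}(\bK Q)$ are the $\Sigma^{k}N_{t}$ and $\Sigma^{k}N_{t+1}$, $k\in\bZ$. By Proposition~\ref{prop:no_regulars} no indecomposable regular object is semistable, so it suffices to rule out $\Sigma^{k}N_{r}$ with $r\notin\{t,t+1\}$; since $\Sigma^{k}N_{r}\in\cQ(\psi)$ is equivalent to $N_{r}\in\cQ(\lambda^{-k}(\psi))$, we may take $k=0$ and suppose $N_{r}\in\cQ(\psi)$. If $r\leq t-1$, then $\Homm{N_{r}}{N_{t}}\neq 0$ forces $\psi\geq\phi_{0}$ by orthogonality (using $N_{t}\in\cQ(\phi_{0})$), whereas $\Homm{N_{t+1}}{\Sigma N_{r}}\neq 0$ (as $r\leq t-1$), together with $N_{t+1}\in\cQ(\lambda^{-i}(\phi_{0}))$ and $\Sigma N_{r}\in\cQ(\lambda(\psi))$, forces $\lambda(\psi)\leq\lambda^{-i}(\phi_{0})$, that is $\psi\leq\lambda^{-i-1}(\phi_{0})<\phi_{0}$ --- a contradiction. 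If $r\geq t+2$, symmetrically $\Homm{N_{t}}{N_{r}}\neq 0$ gives $\psi\leq\phi_{0}$ while $\Homm{N_{r}}{\Sigma N_{t}}\neq 0$ (as $r\geq t+2$) gives $\psi\geq\lambda(\phi_{0})>\phi_{0}$ --- again a contradiction. Hence the semistable indecomposables are exactly the $\Sigma^{k}N_{t}$ and $\Sigma^{k}N_{t+1}$, which do occur by the inclusions above.

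Finally I would assemble. Given $\psi\in\Phi$, essentiality provides $0\neq Z\in\cQ(\psi)$; an indecomposable summand of $Z$ again lies in $\cQ(\psi)$ and by the previous step equals $\Sigma^{k}N_{t}$ or $\Sigma^{k}N_{t+1}$, forcing $\psi=\lambda^{k}(\phi_{0})$ or $\psi=\lambda^{k-i}(\phi_{0})$; thus $\Phi=\{\lambda^{m}(\phi_{0})\mid m\in\bZ\}$. Conversely each indecomposable of $\cQ(\lambda^{m}(\phi_{0}))$ is some $\Sigma^{k}N_{t}$ or $\Sigma^{k}N_{t+1}$ of phase $\lambda^{m}(\phi_{0})$, hence is $\Sigma^{m}N_{t}$ or $\Sigma^{m+i}N_{t+1}$ by injectivity of $m\mapsto\lambda^{m}(\phi_{0})$, so $\cQ(\lambda^{m}(\phi_{0}))=\add{\Sigma^{m}N_{t},\Sigma^{m+i}N_{t+1}}$. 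Identifying $\lambda^{m}(\phi_{0})$ with $m+n\in\bZ$ (an order isomorphism intertwining $\lambda$ with $\phi\mapsto\phi+1$) gives exactly the stated description. I expect the third paragraph to be the main obstacle: one must choose the right $\Hom$-spaces and apply the orthogonality axiom in the right direction to trap $\psi$ between $\phi_{0}$ and $\lambda^{\pm 1}(\phi_{0})$; the rest is bookkeeping with $\lambda$ and essentiality. (Alternatively, one can obtain the ``exactly two indecomposables'' shape of each $\cQ(\lambda^{m}(\phi_{0}))$ directly from Proposition~\ref{prop:co-slice_form}, shortening the last paragraph.)
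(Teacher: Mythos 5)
Your proof is correct and follows essentially the same route as the paper: normalise the exceptional pair, then use the orthogonality axiom together with explicit non-zero $\Hom$-spaces among the $N_r$ to trap the phase of any hypothetical extra semistable indecomposable between incompatible bounds, and finish by essentiality. The only cosmetic difference is that the paper derives its contradiction from maps into and out of the first member of the pair alone (yielding $\phi\geq\lambda(\phi)$), whereas you play the two members $N_t$ and $N_{t+1}$ against each other; both are instances of the same mechanism.
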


\begin{proof}
Without loss of generality, we may assume that $\ind{\cQ(\phi)} = \{P_{0},\Sigma^{i}P_{1}\}$ for some $i\geq 0$. Compatibility of the linear automorphism $\lambda$ with the suspension functor forces the following linear order on $\phi$ and its images under $\lambda$ and $\lambda^{-1}$:
\begin{equation}
\cdots\lambda^{-2}(\phi)<\lambda^{-1}(\phi)<\phi<\lambda(\phi)<\lambda^{2}(\phi)<\cdots.
\label{eqn:linear_order}
\end{equation}

Now suppose that $X\neq \Sigma^{n}P_{0}$ and $X\neq \Sigma^{n}P_{1}$ for any $n\in\bZ$. Suppose further that $X\in\cQ(\psi)$ for some $\psi\in\Phi$. Again, by taking appropriate (de)suspensions and relabelling $\psi$ accordingly, we may assume that either $X=P_{t}$ for some $t>1$ or $X=I_{t}$ for some $t\geq 0$. If $X=P_{t}$ there are non-zero maps $P_{0}\rightarrow P_{t}$ and $P_{t}\rightarrow \Sigma P_{0}$. The first map implies that $\phi\geq\psi$ and the second one implies that $\psi\geq\lambda(\phi)$, by Definition \ref{def:co-slicing}$(1)$. Putting these together, we get $\phi\geq\lambda(\phi)$; a contradiction. A similar argument holds if $X=I_{t}$ for some $t\geq 0$. Hence, $X$ cannot be semistable.

Thus, the only co-slices are the (de)suspensions of $\cQ(\phi)$. Hence, the linearly ordered set in \eqref{eqn:linear_order} is all of $\Phi$.
\end{proof}

The following corollary is straightforward:

\begin{corollary} \label{cor:one_object}
Suppose $(\Phi,\{\cQ(\phi)\,|\, \phi\in\Phi\})$ is a non-trivial essential co-slicing of $\sD^{b}(\bK Q)$ with automorphism $\lambda:\Phi\rightarrow\Phi$. 
\begin{enumerate}[label=(\roman*),leftmargin=*]
\item If $N_{t}\in\cQ(\phi)$ for some $\phi\in\Phi$, then the only other possible semistable indecomposable objects of any phase are precisely the (de)suspensions of $N_{t-1}$ and $N_{t+1}$. Moreover, at most one of $N_{t-1}$ and $N_{t+1}$ can be semistable.
\item Proposition \ref{prop:exceptional} gives a complete list of the essential co-slicings of $\sD^{b}(\bK Q)$ in which each co-slice contains only one indecomposable object.
\end{enumerate}
\end{corollary}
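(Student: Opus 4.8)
The plan is to deduce both parts entirely from the structural results already established — principally Proposition~\ref{prop:no_regulars}, Proposition~\ref{prop:co-slice_form} and Lemma~\ref{lem:linear_order} — together with the $\Hom$-vanishing pattern for the objects $N_i$ read off from the geometric heart of Notation~\ref{not:non-regulars}: namely $\Hom{\sD^{b}(\bK Q)}{N_a}{\Sigma^{j}N_b}=0$ unless $j\in\{0,1\}$, with the degree-$0$ term nonzero exactly when $b\geq a$ and the degree-$1$ term nonzero exactly when $b\leq a-2$. The mechanism throughout is the orthogonality axiom Definition~\ref{def:co-slicing}(2): a nonzero morphism $\cQ(\alpha)\to\cQ(\beta)$ forces $\alpha\geq\beta$, and since $\lambda(-)\geq(-)$, any ``cyclic'' chain of nonzero morphisms collapses all the phases it involves, forcing the relevant co-slice to be suspension-stable. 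We also use repeatedly that semistability is shift-invariant: if $\Sigma^{n}N_s\in\cQ(\psi)$ then $N_s\in\cQ(\lambda^{-n}(\psi))$, so it suffices to argue with the $N_i$ themselves.

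For part~(i), suppose $N_t\in\cQ(\phi)$. By Proposition~\ref{prop:no_regulars} any semistable indecomposable is non-regular, hence by Notation~\ref{not:non-regulars} a shift of some $N_s$; it therefore suffices to show that $N_s$ semistable forces $s\in\{t-1,t,t+1\}$, and that $N_{t-1}$ and $N_{t+1}$ cannot both be semistable. If $s\geq t+2$ and $N_s\in\cQ(\psi)$, the nonzero maps $N_t\to N_s$ and $N_s\to\Sigma N_t$ give $\phi\geq\psi\geq\lambda(\phi)\geq\phi$, hence $\Sigma\cQ(\phi)=\cQ(\phi)$ and $N_s\in\cQ(\phi)$; but a suspension-stable co-slice has, by Proposition~\ref{prop:co-slice_form}(iii), all its indecomposables equal to the shifts of a single $N_u$, which is impossible since $N_t\neq N_s$ and no $N_i$ is a nonzero shift of any $N_j$. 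The case $s\leq t-2$ is symmetric. Similarly, if $N_{t-1},N_t,N_{t+1}$ were all semistable, the nonzero maps $N_{t-1}\to N_t$, $N_t\to N_{t+1}$ and $N_{t+1}\to\Sigma N_{t-1}$ (the last because $\Hom{\sD^{b}(\bK Q)}{N_{t+1}}{\Sigma N_{t-1}}\neq 0$) collapse the phases, placing $N_{t-1},N_t,N_{t+1}$ in one suspension-stable co-slice and again contradicting Proposition~\ref{prop:co-slice_form}(iii).

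For part~(ii), let $(\Phi,\{\cQ(\phi)\,|\,\phi\in\Phi\})$ be an essential co-slicing each of whose co-slices contains exactly one indecomposable; it is automatically non-trivial, so Proposition~\ref{prop:co-slice_form}(i) gives $\cQ(\phi)=\add{\Sigma^{n}N_s}$ for each $\phi$, and in particular some $N_t$ is semistable. If the shifts of $N_t$ were the only semistable indecomposables, then every object, being an iterated extension of objects in the co-slices (its HN filtration), would lie in $\thick{\sD^{b}(\bK Q)}{N_t}$; but $N_t$ is exceptional, so $\thick{\sD^{b}(\bK Q)}{N_t}$ is a proper thick subcategory (it does not contain $N_{t+1}$), contradicting Definition~\ref{def:co-slicing}(3). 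Hence, by part~(i) applied at $N_t$ and then at the semistable neighbour, the semistable indecomposables are exactly the shifts of two consecutive objects $N_t$ and $N_{t+1}$ (after renaming $t$). Since the co-slicing is essential, distinct phases have distinct co-slices, and the co-slices are precisely the $\add{\Sigma^{n}N_t}$ and $\add{\Sigma^{m}N_{t+1}}$ ($n,m\in\bZ$), which are pairwise distinct; this identifies $\Phi$ with $\bZ\times\{0,1\}$ via $(n,0)\leftrightarrow\add{\Sigma^{n}N_t}$ and $(m,1)\leftrightarrow\add{\Sigma^{m}N_{t+1}}$, with $\lambda$ acting as $(n,i)\mapsto(n+1,i)$. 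The only nonzero $\Hom$ between a shift of $N_t$ and a shift of $N_{t+1}$ is the degree-$0$ map $N_t\to N_{t+1}$ and its shifts, so Definition~\ref{def:co-slicing}(2) forces $(n,1)<(n,0)$ for all $n$; Lemma~\ref{lem:linear_order} then pins the order on $\Phi$ down to one of the $\cE_{p}$, $p\in\bN\cup\{\infty\}$, and the co-slicing is exactly the one attached to the exceptional pair $\{N_t,N_{t+1}\}$ in Proposition~\ref{prop:exceptional} (whose construction, stated there for $\{P_0,P_1\}$, applies verbatim to every exceptional pair $\{N_n,N_{n+1}\}$). As those are genuine co-slicings of the required kind, Proposition~\ref{prop:exceptional} does give the complete list.

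Since this is a corollary, the argument has no genuine obstacle once the earlier structure theory is available; the only point requiring a little vigilance is the bookkeeping of which $\Hom$-groups between the various shifts of the $N_i$ vanish and which do not — in particular the non-vanishing $\Hom{\sD^{b}(\bK Q)}{N_{t+1}}{\Sigma N_{t-1}}\neq 0$, which is exactly what forbids $N_{t-1}$ and $N_{t+1}$ from being simultaneously semistable.
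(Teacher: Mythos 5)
Your proof is correct, and since the paper offers no argument for this corollary (it is simply labelled ``straightforward''), your write-up supplies exactly the intended one: the orthogonality-loop device already used in Proposition~\ref{prop:no_regulars} and Lemma~\ref{lem:two_objects} (a nonzero map $N_a\to N_b$ plus a nonzero map $N_b\to\Sigma N_a$ collapses the phases and forces a suspension-stable co-slice, contradicting Proposition~\ref{prop:co-slice_form}\ref{item:countable}), combined with the generation argument ruling out a single exceptional object and with Lemma~\ref{lem:linear_order} to pin down the admissible orders. The $\Hom$-bookkeeping for the $N_i$ (in particular $\Hom{\sD^{b}(\bK Q)}{N_{t+1}}{\Sigma N_{t-1}}\neq 0$) is accurate.
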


\begin{theorem} \label{thm:comparison}
Suppose $(\Phi,\{\cR(\phi)\,|\, \phi\in\Phi\})$ is an essential co-slicing of $\sD^{b}(\bK Q)$ with automorphism $\lambda:\Phi\rightarrow\Phi$. Then $(\Phi,\{\cR(\phi)\,|\, \phi\in\Phi\})$ is coarser than one of the exceptional co-slicings $\cE_{p}$ for $p\in\bN\cup\{\infty\}$.
\end{theorem}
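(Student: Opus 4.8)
The plan is to combine the structural results of Sections~\ref{sec:exceptional} and \ref{sec:co-slice_class} and split into cases according to the number of indecomposables in the co-slices. First dispose of the trivial case: if $(\Phi,\{\cR(\phi)\,|\,\phi\in\Phi\})$ is trivial then orthogonality (Definition~\ref{def:co-slicing}(2)) together with essentiality forces $\Phi$ to be a single point with $\cR(\phi)=\sD^{b}(\bK Q)$, and since every object of $\sD^{b}(\bK Q)$ admits an HN filtration with respect to any $\cE_{p}$, its iterated extensions show $(\bigcup_{\epsilon}\cQ(\epsilon))^{+}=\sD^{b}(\bK Q)$; hence the constant map $\cE_{p}\to\Phi$ exhibits $(\Phi,\{\cR(\phi)\,|\,\phi\in\Phi\})$ as coarser than $\cE_{p}$. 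So assume the co-slicing is non-trivial and apply Proposition~\ref{prop:co-slice_form}, which leaves exactly three possibilities for the co-slices. If every co-slice has a single indecomposable object, then by Corollary~\ref{cor:one_object}(ii) the co-slicing \emph{is} one of the $\cE_{p}$ and we are done (refinement by the identity).

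Next suppose some co-slice has exactly two indecomposable objects. By Lemma~\ref{lem:two_objects} the whole co-slicing is then pinned down: $\Phi=\bZ$, $\lambda(\phi)=\phi+1$, and $\cR(\phi)=\add{\Sigma^{\phi}N_{t},\Sigma^{\phi+i}N_{t+1}}$ for fixed $t\in\bZ$ and $i\geq 0$. Compare it with the exceptional co-slicing $\cE_{p}$ built from the exceptional pair $(N_{t},N_{t+1})$ with $p=\max\{i,1\}$. One writes down the surjection $r\colon\cE_{p}\to\bZ$ that collapses, for each $\phi$, the co-slices $\add{\Sigma^{\phi}N_{t}}$ and $\add{\Sigma^{\phi+i}N_{t+1}}$ to the single phase $\phi$; with the linear order on $\cE_{p}$ described in Lemma~\ref{lem:linear_order} these two co-slices are adjacent, so $r$ is weakly order-preserving, it intertwines the automorphisms $\eta$ and $\lambda$, and condition (3) of Definition~\ref{def:finer} holds because the relevant extension groups between the shifts of $N_{t}$ and $N_{t+1}$ that occur all vanish (Standard Facts~\ref{facts} and heredity of $\bK Q$), so $(\add{\Sigma^{\phi}N_{t}}\cup\add{\Sigma^{\phi+i}N_{t+1}})^{+}=\cR(\phi)$. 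Thus the co-slicing is coarser than $\cE_{p}$.

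Finally suppose some co-slice $\cR(\phi)$ has countably many indecomposables; by Lemma~\ref{lem:two_objects} no co-slice then has exactly two (otherwise the whole co-slicing, and in particular $\cR(\phi)$, would be of two-object type). So $\cR(\phi)=\add{\Sigma^{k}N_{t}\,|\,k\in\bZ}$ for some $t$, whence $\Sigma\cR(\phi)=\cR(\phi)$ and $\lambda(\phi)=\phi$. By Corollary~\ref{cor:one_object}(i) the only further semistable indecomposables are (de)suspensions of exactly one of $N_{t-1},N_{t+1}$; say $N_{t+1}$ (the other case being symmetric). If no shift of $N_{t+1}$ were semistable there would be no HN filtration of a regular object (Proposition~\ref{prop:no_regulars}), a contradiction, so all shifts of $N_{t+1}$ are semistable. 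They cannot be spread over infinitely many one-object co-slices: feeding the Standard Triangles~\ref{triangles} (which express the $N_{m}$ with $|m-t|$ large as iterated extensions of copies of $N_{t}$, $N_{t+1}$ and their suspensions) into Definition~\ref{def:co-slicing}(2), together with the nonzero maps $N_{t}\to N_{t+1}$, yields mutually contradictory inequalities among $\phi$ and the phases of the $\Sigma^{k}N_{t+1}$. Hence all shifts of $N_{t+1}$ sit in one co-slice $\cR(\psi)=\add{\Sigma^{k}N_{t+1}\,|\,k\in\bZ}$; these two are (after deleting zero co-slices) the only co-slices, $\psi<\phi$ (again by the maps $N_{t}\to N_{t+1}$), and the length-two HN filtrations now show that $(\add{\Sigma^{k}N_{t+1}},\add{\Sigma^{k}N_{t}})$ is a stable co-t-structure, so the co-slicing is the one of Example~\ref{ex:stable} attached to it. This is coarser than the exceptional co-slicing $\cE_{\infty}$ of $(N_{t},N_{t+1})$: the surjection sending every $(n,1)$ to $0$ and every $(n,0)$ to $1$ is order-preserving, intertwines the automorphisms, and realizes the two co-slices, since $(\bigcup_{n}\add{\Sigma^{n}N_{t+1}})^{+}=\add{\Sigma^{k}N_{t+1}}$ and likewise for $N_{t}$.

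I expect the main obstacle to be the third case, specifically ruling out the ``spread-out'' configuration of the second family of semistables: the argument must exploit that the countable co-slice is suspension-stable, so $\lambda$ fixes its phase, whereas the $\lambda$-orbit of any other phase is an infinite strictly monotone chain, and combine this with the precise shape of the Standard Triangles to produce a contradiction. The remaining work---order-preservation, $\eta$--$\lambda$ equivariance, and verifying Definition~\ref{def:finer}(3)---is routine, the only mild care being the small extension computations ensuring the relevant extension closures reduce to additive closures.
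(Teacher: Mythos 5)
Your first two cases (single-object co-slices, two-object co-slices) follow the paper's argument, with a slightly different but still workable choice of the index $p$ and of the collapsing map $r$; the explicit treatment of the trivial co-slicing is a reasonable addition that the paper leaves implicit. The problem is the third case. You claim that the shifts $\Sigma^{k}N_{t+1}$ cannot be ``spread over infinitely many one-object co-slices'' and assert that the Standard Triangles plus Definition~\ref{def:co-slicing}(2) yield contradictory inequalities. No such contradiction exists. The only relevant nonzero morphisms are $\Sigma^{k}N_{t}\rightarrow\Sigma^{k}N_{t+1}$, which force $\psi_{k}\leq\phi$ where $\Sigma^{k}N_{t+1}\in\cR(\psi_{k})$ and $\phi$ is the phase of the suspension-stable co-slice $\add{\Sigma^{k}N_{t}\,|\,k\in\bZ}$; in the other direction $\Hom{\sT}{N_{t+1}}{\Sigma^{j}N_{t}}=0$ for all $j$ (exceptionality of the pair), so nothing forces $\phi\leq\psi_{k}$. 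The resulting configuration $\psi_{0}<\psi_{1}<\cdots<\phi$ is perfectly consistent with a linear order: take $\Phi=\bZ\cup\{\infty\}$ with $\lambda(n)=n+1$, $\lambda(\infty)=\infty$, $\cR(n)=\add{\Sigma^{n}N_{t+1}}$ and $\cR(\infty)=\add{\Sigma^{n}N_{t}\,|\,n\in\bZ}$. One checks orthogonality directly and obtains HN filtrations from the Standard Triangles~\ref{triangles}, so this is a genuine essential co-slicing — it is exactly case (b) in the paper's proof — and your argument wrongly excludes it.

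Because the theorem quantifies over all essential co-slicings, this omission is a real gap: you have not shown that these ``spread-out'' co-slicings are coarser than an exceptional one. The repair is short and is what the paper does: instead of ruling the configuration out, exhibit it as coarser than $\cE_{\infty}$ via the surjection $r\colon\bZ\times\{0,1\}\rightarrow\bZ\cup\{\infty\}$ with $(n,0)\mapsto\infty$ and $(n,1)\mapsto n$, which intertwines the automorphisms, is weakly order-preserving for the $\cE_{\infty}$ order, and satisfies Definition~\ref{def:finer}(3). With that subcase added, your proof closes.
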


\begin{proof}
Suppose $(\Phi,\{\cR(\phi)\,|\, \phi\in\Phi\})$ is an essential co-slicing of $\sD^{b}(\bK Q)$ with automorphism $\lambda:\Phi\rightarrow\Phi$. In Proposition \ref{prop:co-slice_form} we established all the possible forms for each co-slice. Suppose each co-slice contains only one indecomposable object. Then by Corollary \ref{cor:one_object} it is one of the exceptional co-slicings $\cE_{p}$ constructed in Proposition \ref{prop:exceptional}.

Now suppose there is a co-slice with two distinct indecomposable objects. Then by Lemma \ref{lem:two_objects}, the co-slicing is given by
\begin{itemize}
\item $\Phi=\bZ$ and $\lambda(\phi)=\phi+1$; and,
\item $\cR(\phi)=\add{\Sigma^{\phi}N_{t},\Sigma^{\phi+p-1}N_{t+1}}$ for some $p\in\bN$.
\end{itemize}
Define a map $r:\cE_{p}=\bZ\times\{0,1\}\rightarrow \bZ$ by 
$$(n,0) \mapsto  n \quad {\rm and} \quad (n,1)  \mapsto  n-p+1.$$
This map is clearly surjective and it is easy to show it satisfies Definition \ref{def:finer}(ii). Definition \ref{def:finer}(iii) can be seen by noting for $n\in\bZ$, we have $r^{-1}(n)=\{(n,0),(n+p-1,1)\}$. It follows that 
$$(\bigcup_{\epsilon\in r^{-1}(n)} \cQ(\epsilon))^{+} = \add{\Sigma^{n}N_{t},\Sigma^{n+p-1}N_{t+1}} = \cR(n).$$
Hence, the co-slicing $(\Phi,\{\cR(\phi)\,|\, \phi\in\Phi\})$ is coarser than the exceptional co-slicing $(\cE_{p},\{\cQ(\phi)\,|\, \phi\in\Phi\})$.

The final case to check is when a co-slice has infinitely many indecomposable objects, i.e takes the form \ref{item:countable} in Proposition \ref{prop:co-slice_form}.
Suppose that one co-slice has the form $\add{\Sigma^{n}N_{t}\,|\, n\in\bZ}$ for some $t\in\bZ$. Then either
\begin{enumerate}[label=(\alph*),leftmargin=*]
\item there is only one other co-slice, without loss of generality we may assume it to be $\add{\Sigma^{n}N_{t+1}\,|\, n\in\bZ}$; or
\item there are countably many other co-slices, without loss of generality, we may assume them to be $\add{\Sigma^{n}N_{t+1}}$ for $n\in\bZ$.
\end{enumerate}
We shall show that in both cases they are coarser than (the same) exceptional co-slicing of $\sD^{b}(\bK Q)$.
In case (a), the co-slicing is specified by
\begin{itemize}
\item $\Phi=\{0,1\}$ and $\lambda(\phi)=\phi$; and,
\item $\cR(0)=\add{\Sigma^{n}N_{t}\,|\, n\in\bZ}$ and $\cR(1)=\add{\Sigma^{n}N_{t+1}\,|\, n\in\bZ}$.
\end{itemize}
One can then easily check that $r:\cE_{\infty} = \bZ\times\{0,1\}\rightarrow \{0,1\}$ defined by $(n,i)\mapsto i$ is a surjective map satisfying conditions (i), (ii) and (iii) of Definition \ref{def:finer}.

In case (b), the co-slicing is specified by
\begin{itemize}
\item $\Phi=\bZ\cup\{\infty\}$ with $n<\infty$ for all $n\in\bZ$. The linear automorphism acts as $\lambda(n)=n+1$ for $n\in\bZ$ and $\lambda(\infty)=\infty$; and,
\item $\cR(n)=\add{\Sigma^{n}N_{t+1}}$ for $n\in\bZ$ and $\cR(\infty)=\add{\Sigma^{n}N_{t}\,|\, n\in\bZ}$.
\end{itemize}
Again, one can easily check that $r:\cE_{\infty} = \bZ\times\{0,1\}\rightarrow \bZ\cup\{\infty\}$ defined by 
$$(n,0)  \mapsto  \infty \quad {\rm and} \quad (n,1)  \mapsto  n,$$
is a surjective map satisfying the conditions of Definition \ref{def:finer}.

Hence, each co-slicing of $\sD^{b}(\bK Q)$ is coarser than one of the exceptional co-slicings of Proposition \ref{prop:exceptional}.
\end{proof}


\section{Classification of co-t-structures in $\sD^{b}(\bK Q)$}
\label{sec:classification}

In this section we use Theorem \ref{thm:comparison} together with the exceptional co-slicings of Section \ref{sec:exceptional} to obtain a classification of the co-t-structures in $\sD^{b}(\bK Q)$. Note that we obtain all co-t-structures in $\sD^{b}(\bK Q)$, not just the bounded co-t-structures.

In Theorem~\ref{thm:all} we shall show that the co-t-structures in $\sD^{b}(\bK Q)$ are induced by the exceptional co-slicings of Proposition \ref{prop:exceptional}. Note that Theorem \ref{thm:comparison} means that all essential co-slicings of $\sD^{b}(\bK Q)$ are comparable with those of Proposition \ref{prop:exceptional}. As in Section \ref{sec:exceptional}, we explain the classification only for the exceptional pair $\{P_{0},P_{1}\}$. The other co-t-structures can be obtained analogously. 

The induced co-t-structures occur in four families outlined below, one bounded family and three unbounded families.

\subsection{The family of bounded co-t-structures}
\label{sec:bounded}

This family is induced by the minimal co-slicings $\cE_{p}$ for $p\in\bN$ of Lemma \ref{lem:linear_order}(ii) and Proposition \ref{prop:exceptional}. The only way to partition $\cE_{p}$ as $\cE_{p}^{-}\cup\cE_{p}^{+}$ with $\epsilon_{-}<\epsilon_{+}$ for all $\epsilon_{-}\in\cE_{p}^{-}$ and $\epsilon_{+}\in\cE_{p}^{+}$ is to choose a $\delta\in\cE_{p}$ and define $\cE_{p}^{-}=\{\epsilon\,|\,\epsilon\leq\delta \}$ and $\cE_{p}^{+}=\{\epsilon\,|\,\epsilon>\delta\}$. We get the co-t-structure  $(\sA_{\delta}(\cE_{p}),\sB_{\delta}(\cE_{p}))$ defined by
\[
\sA_{\delta}(\cE_{p})  := (\bigcup_{\epsilon\leq\delta}\cQ(\epsilon))^{+} \quad {\rm and} \quad \sB_{\delta}(\cE_{p})  :=  (\bigcup_{\epsilon>\delta}\cQ(\epsilon))^{+}.
\]

We indicate the case $\delta=(0,0)$ and $p=3$ on a sketch of the Auslander-Reiten quiver of $\sD^{b}(\bK Q)$. The full subcategory $\sA_{\delta}(\cE_{p})$ is indicated by horizontal hatching
\begin{tikzpicture}
\draw [pattern=A hatch](0,0) rectangle (0.5,0.3);
\end{tikzpicture}
and the full subcategory $\sB_{\delta}(\cE_{p})$ is indicated by vertical hatching 
\begin{tikzpicture}
\draw [pattern=B hatch](0,0) rectangle (0.5,0.3);
\end{tikzpicture}.

\

\begin{center}
\begin{tikzpicture}
\draw (0,0) -- (2.4,0);
\draw (0,0.8) -- (2.4,0.8);
\draw [decorate,decoration=zigzag] (0,0) -- (0, 0.8);
\draw [decorate,decoration=zigzag] (2.4,0) -- (2.4, 0.8);
\draw (1,0) -- (1.4, 0.8);

\fill (1.6,0.6) circle (0.6mm);
\fill (1.4,0.2) circle (0.6mm);

\draw[pattern=A hatch] (0,0) -- (1.6,0) -- (2.0,0.8) -- (0,0.8);

\draw (3.2,0.4) circle (5mm);

\draw[thick,decorate,decoration={brace,mirror,raise=10pt}] (1,0) -- (5,0) node[pos=0.5,anchor=north,yshift=-0.75cm] {degree 0};

\draw (4.0,0) -- (6.4,0);
\draw (4,0.8) -- (6.4,0.8);
\draw [decorate,decoration=zigzag] (4.0,0) -- (4.0, 0.8);
\draw [decorate,decoration=zigzag] (6.4,0) -- (6.4, 0.8);
\draw (5,0) -- (5.4, 0.8);

\fill (5.6,0.6) circle (0.6mm);
\fill (5.4,0.2) circle (0.6mm);

\draw[pattern=A hatch] (5.6,0.6) circle (1.8mm);
\draw[pattern=B hatch] (5.4,0.2) circle (1.8mm);

\draw (7.2,0.4) circle (5mm);

\draw (8.0,0) -- (10.4,0);
\draw (8.0,0.8) -- (10.4,0.8);
\draw [decorate,decoration=zigzag] (8.0,0) -- (8.0, 0.8);
\draw [decorate,decoration=zigzag] (10.4,0) -- (10.4, 0.8);
\draw (9,0) -- (9.4, 0.8);

\fill (9.6,0.6) circle (0.6mm);
\fill (9.4,0.2) circle (0.6mm);

\draw[pattern=A hatch] (9.6,0.6) circle (1.8mm);
\draw[pattern=B hatch] (9.4,0.2) circle (1.8mm);

\draw (11.2,0.4) circle (5mm);

\draw (12.0,0) -- (14.4,0);
\draw (12.0,0.8) -- (14.4,0.8);
\draw [decorate,decoration=zigzag] (12.0,0) -- (12.0, 0.8);
\draw [decorate,decoration=zigzag] (14.4,0) -- (14.4, 0.8);
\draw (13,0) -- (13.4, 0.8);

\draw[pattern=B hatch] (14.4,0) -- (13,0) -- (13.4,0.8) -- (14.4,0.8);

\fill (13.6,0.6) circle (0.6mm);
\fill (13.4,0.2) circle (0.6mm);
\end{tikzpicture}
\end{center}

\subsection{The families of unbounded co-t-structures}
\label{sec:unbounded}

The remaining families are induced by the exceptional co-slicing with the linear ordering $\cE_{\infty}$ of Lemma \ref{lem:linear_order}. Recall, the linear ordering is given by $(n,i)<(m,i)$ for $i\in\{0,1\}$ and $n<m$, and $(n,1)<(m,0)$ for any $n,m\in\bZ$.

The co-t-structures that are bounded on one side are induced by $\cE_{\infty}$ by choosing a $\delta \in \cE_{\infty}$ and defining  $(\sA_{\delta}(\cE_{\infty}),\sB_{\delta}(\cE_{\infty}))$ by
\[
\sA_{\delta}(\cE_{\infty})  := (\bigcup_{\epsilon\leq\delta}\cQ(\epsilon))^{+} \quad {\rm and} \quad \sB_{\delta}(\cE_{\infty})  :=  (\bigcup_{\epsilon>\delta}\cQ(\epsilon))^{+}.
\]

\subsubsection{The subfamily of bounded below co-t-structures}
\label{sec:bdd_below}

 The first subfamily is the family of bounded below co-t-structures, which is obtained by taking $\delta = (n,0)$ for some $n\in \bZ$. 
Below, we sketch $(\sA_{(0,0)}(\cE_{\infty}),\sB_{(0,0)}(\cE_{\infty}))$:

\

\begin{center}
\begin{tikzpicture}
\draw (0,0) -- (2.4,0);
\draw (0,0.8) -- (2.4,0.8);
\draw [decorate,decoration=zigzag] (0,0) -- (0, 0.8);
\draw [decorate,decoration=zigzag] (2.4,0) -- (2.4, 0.8);
\draw (1,0) -- (1.4, 0.8);

\path [pattern=A hatch] (0,0) -- (2.4,0) -- (2.4, 0.8) -- (0,0.8);

\fill (1.6,0.6) circle (0.6mm);
\fill (1.4,0.2) circle (0.6mm);

\draw [pattern=A hatch] (3.2,0.4) circle (5mm);

\draw (4.0,0) -- (6.4,0);
\draw (4,0.8) -- (6.4,0.8);
\draw [decorate,decoration=zigzag] (4.0,0) -- (4.0, 0.8);
\draw [decorate,decoration=zigzag] (6.4,0) -- (6.4, 0.8);
\draw (5,0) -- (5.4, 0.8);

\fill (5.6,0.6) circle (0.6mm);
\fill (5.4,0.2) circle (0.6mm);

\draw[thick,decorate,decoration={brace,mirror,raise=10pt}] (5,0) -- (9,0) node[pos=0.5,anchor=north,yshift=-0.75cm] {degree 0};

\path[pattern=A hatch] (4.0,0) -- (5.6,0) -- (6,0.8) -- (4.0,0.8);
\draw (5.6,0) -- (6,0.8);

\draw (7.2,0.4) circle (5mm);

\draw (8.0,0) -- (10.4,0);
\draw (8.0,0.8) -- (10.4,0.8);
\draw [decorate,decoration=zigzag] (8.0,0) -- (8.0, 0.8);
\draw [decorate,decoration=zigzag] (10.4,0) -- (10.4, 0.8);
\draw (9,0) -- (9.4, 0.8);

\fill (9.6,0.6) circle (0.6mm);
\fill (9.4,0.2) circle (0.6mm);

\draw[pattern=A hatch] (9.6,0.6) circle (1.8mm);
\draw[pattern=B hatch] (9.4,0.2) circle (1.8mm);

\draw (11.2,0.4) circle (5mm);

\draw (12.0,0) -- (14.4,0);
\draw (12.0,0.8) -- (14.4,0.8);
\draw [decorate,decoration=zigzag] (12.0,0) -- (12.0, 0.8);
\draw [decorate,decoration=zigzag] (14.4,0) -- (14.4, 0.8);
\draw (13,0) -- (13.4, 0.8);

\fill (13.6,0.6) circle (0.6mm);
\fill (13.4,0.2) circle (0.6mm);

\draw[pattern=A hatch] (13.6,0.6) circle (1.8mm);
\draw[pattern=B hatch] (13.4,0.2) circle (1.8mm);

\end{tikzpicture}
\end{center}

\subsubsection{The subfamily of bounded above co-t-structures}
\label{sec:bdd_above}

The second subfamily consists of the bounded above co-t-structure, which are obtained by taking $\delta = (n,1)$ for some $n \in \bZ$.
Below, we sketch $(\sA_{(0,1)}(\cE_{\infty}),\sB_{(0,1)}(\cE_{\infty}))$:

\

\begin{center}
\begin{tikzpicture}
\draw (0,0) -- (2.4,0);
\draw (0,0.8) -- (2.4,0.8);
\draw [decorate,decoration=zigzag] (0,0) -- (0, 0.8);
\draw [decorate,decoration=zigzag] (2.4,0) -- (2.4, 0.8);
\draw (1,0) -- (1.4, 0.8);

\fill (1.6,0.6) circle (0.6mm);
\fill (1.4,0.2) circle (0.6mm);
\draw[pattern=A hatch] (1.6,0.6) circle (1.8mm);
\draw[pattern=B hatch] (1.4,0.2) circle (1.8mm);

\draw (3.2,0.4) circle (5mm);

\draw (4.0,0) -- (6.4,0);
\draw (4,0.8) -- (6.4,0.8);
\draw [decorate,decoration=zigzag] (4.0,0) -- (4.0, 0.8);
\draw [decorate,decoration=zigzag] (6.4,0) -- (6.4, 0.8);
\draw (5,0) -- (5.4, 0.8);

\fill (5.6,0.6) circle (0.6mm);
\fill (5.4,0.2) circle (0.6mm);
\draw[pattern=A hatch] (5.6,0.6) circle (1.8mm);
\draw[pattern=B hatch] (5.4,0.2) circle (1.8mm);

\draw[thick,decorate,decoration={brace,mirror,raise=10pt}] (5,0) -- (9,0) node[pos=0.5,anchor=north,yshift=-0.75cm] {degree 0};

\draw (7.2,0.4) circle (5mm);

\draw (8.0,0) -- (10.4,0);
\draw (8.0,0.8) -- (10.4,0.8);
\draw [decorate,decoration=zigzag] (8.0,0) -- (8.0, 0.8);
\draw [decorate,decoration=zigzag] (10.4,0) -- (10.4, 0.8);
\draw (9,0) -- (9.4, 0.8);

\fill (9.6,0.6) circle (0.6mm);
\fill (9.4,0.2) circle (0.6mm);

\path [pattern=B hatch] (10.4,0.8) -- (9.4,0.8) -- (9,0) -- (10.4,0);
\path [pattern=B hatch] (12.0,0) -- (14.4,0) -- (14.4,0.8) -- (12.0,0.8);

\draw [pattern=B hatch] (11.2,0.4) circle (5mm);

\draw (12.0,0) -- (14.4,0);
\draw (12.0,0.8) -- (14.4,0.8);
\draw [decorate,decoration=zigzag] (12.0,0) -- (12.0, 0.8);
\draw [decorate,decoration=zigzag] (14.4,0) -- (14.4, 0.8);
\draw (13,0) -- (13.4, 0.8);

\fill (13.6,0.6) circle (0.6mm);
\fill (13.4,0.2) circle (0.6mm);
\end{tikzpicture}
\end{center}

\subsubsection{The subfamily of stable co-t-structures}
\label{sec:stable}

The stable co-t-structure (= stable t-structure), which we denote $(\sA(\cE_{\infty}),\sB(\cE_{\infty}))$ is defined by
$$\sA(\cE_{\infty}) := (\bigcup_{\epsilon\in \cE_{\infty}^{-}} \cQ(\epsilon) )^{+} \quad {\rm and} \quad 
\sB(\cE_{\infty})  :=  (\bigcup_{\epsilon\in \cE_{\infty}^{+}}\cQ(\epsilon))^{+}.$$
Where $\cE_{\infty}^{-}=\{(n,1)\,|\, n\in\bZ\}$ and $\cE_{\infty}^{+}=\{(n,0)\,|\, n\in\bZ\}$. We sketch the stable co-t-structure below.

\

\begin{center}
\begin{tikzpicture}
\draw (0,0) -- (2.4,0);
\draw (0,0.8) -- (2.4,0.8);
\draw [decorate,decoration=zigzag] (0,0) -- (0, 0.8);
\draw [decorate,decoration=zigzag] (2.4,0) -- (2.4, 0.8);
\draw (1,0) -- (1.4, 0.8);

\fill (1.6,0.6) circle (0.6mm);
\fill (1.4,0.2) circle (0.6mm);
\draw[pattern=A hatch] (1.6,0.6) circle (1.8mm);
\draw[pattern=B hatch] (1.4,0.2) circle (1.8mm);

\draw (3.2,0.4) circle (5mm);

\draw (4.0,0) -- (6.4,0);
\draw (4,0.8) -- (6.4,0.8);
\draw [decorate,decoration=zigzag] (4.0,0) -- (4.0, 0.8);
\draw [decorate,decoration=zigzag] (6.4,0) -- (6.4, 0.8);
\draw (5,0) -- (5.4, 0.8);

\fill (5.6,0.6) circle (0.6mm);
\fill (5.4,0.2) circle (0.6mm);
\draw[pattern=A hatch] (5.6,0.6) circle (1.8mm);
\draw[pattern=B hatch] (5.4,0.2) circle (1.8mm);

\draw[thick,decorate,decoration={brace,mirror,raise=10pt}] (5,0) -- (9,0) node[pos=0.5,anchor=north,yshift=-0.75cm] {degree 0};

\draw (7.2,0.4) circle (5mm);

\draw (8.0,0) -- (10.4,0);
\draw (8.0,0.8) -- (10.4,0.8);
\draw [decorate,decoration=zigzag] (8.0,0) -- (8.0, 0.8);
\draw [decorate,decoration=zigzag] (10.4,0) -- (10.4, 0.8);
\draw (9,0) -- (9.4, 0.8);

\fill (9.6,0.6) circle (0.6mm);
\fill (9.4,0.2) circle (0.6mm);

\draw[pattern=A hatch] (9.6,0.6) circle (1.8mm);
\draw[pattern=B hatch] (9.4,0.2) circle (1.8mm);

\draw (11.2,0.4) circle (5mm);

\draw (12.0,0) -- (14.4,0);
\draw (12.0,0.8) -- (14.4,0.8);
\draw [decorate,decoration=zigzag] (12.0,0) -- (12.0, 0.8);
\draw [decorate,decoration=zigzag] (14.4,0) -- (14.4, 0.8);
\draw (13,0) -- (13.4, 0.8);

\fill (13.6,0.6) circle (0.6mm);
\fill (13.4,0.2) circle (0.6mm);

\draw[pattern=A hatch] (13.6,0.6) circle (1.8mm);
\draw[pattern=B hatch] (13.4,0.2) circle (1.8mm);
\end{tikzpicture}
\end{center}

\subsection{The classification}

We now prove the main result of this section.

\begin{theorem} \label{thm:all}
Every co-t-structure in $\sD^b(\bK Q)$ is induced by an exceptional co-slicing of $\sD^b(\bK Q)$. In particular, up to (de)suspension, all co-t-structures in $\sD^b(\bK Q)$ arise from exceptional pairs of $\mod{\bK Q}$ (or, equivalently $\mathsf{coh}(\bP^1)$) in the manner described in Sections \ref{sec:bounded} and \ref{sec:unbounded}.
\end{theorem}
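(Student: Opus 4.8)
The plan is to reduce an arbitrary co-t-structure $(\sA,\sB)$ of $\sD^{b}(\bK Q)$ to an \emph{essential} co-slicing and then to quote Theorem~\ref{thm:comparison} together with Lemma~\ref{lem:compare}. Concretely, in each case below I shall produce an essential co-slicing $(\Phi,\{\cQ(\phi)\,|\,\phi\in\Phi\})$ which induces $(\sA,\sB)$ in the sense of Definition~\ref{def:induced}; Theorem~\ref{thm:comparison} makes this co-slicing coarser than some exceptional co-slicing $\cE_{p}$, and Lemma~\ref{lem:compare} then gives that $(\sA,\sB)$ is induced by $\cE_{p}$. The ``in particular'' clause follows because $\cE_{p}$ is linearly ordered, so the only decompositions $\cE_{p}=\cE_{p}^{-}\cup\cE_{p}^{+}$ admissible in Lemma~\ref{lem:family} are the ones listed in Sections~\ref{sec:bounded} and \ref{sec:unbounded}: for $p\in\bN$ the chain $\cE_{p}$ has order type $\bZ$, with cuts indexed by an element $\delta$ (Section~\ref{sec:bounded}); for $p=\infty$ the chain $\cE_{\infty}$ has order type $\bZ+\bZ$, with non-trivial cuts inside the lower copy (Section~\ref{sec:bdd_above}), at the gap (Section~\ref{sec:stable}), or inside the upper copy (Section~\ref{sec:bdd_below}) --- together with the two empty cuts, which produce the trivial co-t-structures $(\sD^{b}(\bK Q),0)$ and $(0,\sD^{b}(\bK Q))$.

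So I first dispose of those two trivial co-t-structures (induced by any co-slicing via an empty partition in Lemma~\ref{lem:family}), and then assume $(\sA,\sB)$ non-trivial with co-heart $\sC:=\sA\cap\Sigma^{-1}\sB$. Here $\sC$ is a partial silting subcategory, and $\sC=0$ exactly when $(\sA,\sB)$ is stable --- for the latter I would use that $\sB=\sA^{\perp}$ is extension closed, so in the approximation triangle $\tri{a'}{\Sigma^{-1}b}{b'}$ one has $a'\in\sA\cap\Sigma^{-1}\sB=\sC$. I split into three cases. If $(\sA,\sB)$ is \emph{stable}, Example~\ref{ex:stable} gives the co-slicing $(\{0<1\},\{\sA,\sB\})$, essential because $\sA,\sB\neq 0$, which induces $(\sA,\sB)$ via the partition $\{0\}\cup\{1\}$. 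If $\sC$ is a \emph{silting subcategory}, then $(\sA,\sB)$ is forced to be the bounded co-t-structure with co-heart $\sC$ (by \cite[Corollary 5.8]{MSSS}, \cite[Proposition 1.5.6]{Bondarko}, and \cite[Lemma 1.3.8]{Bondarko}), so Example~\ref{ex:bounded} gives the essential co-slicing $(\bZ,\{\Sigma^{n}\sC\})$ and Corollary~\ref{cor:family} recovers $(\sA,\sB)$ at the appropriate cut. In both cases, Theorem~\ref{thm:comparison} and Lemma~\ref{lem:compare} finish the argument.

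The remaining case is $\sC\neq 0$ \emph{not silting}, and this is where Section~\ref{sec:almost} does the work. Every non-zero partial silting subcategory of $\sD^{b}(\bK Q)$ is silting or almost silting --- this follows from the classification of silting objects (cf.\ \cite{AI}), the fact that the indecomposable rigid objects are exactly the $\Sigma^{n}N_{t}$, and the fact that a partial silting subcategory with two non-isomorphic indecomposables already generates $\sD^{b}(\bK Q)$ --- so $\sC$ is almost silting with a single indecomposable, say $\sC=\add{\Sigma^{a}N_{t}}$; since a co-t-structure is induced by $\cE_{p}$ if and only if each of its suspensions is, I may assume $a=0$. Both $N_{t+1}$ and $N_{t-1}$ are complements of $\add{N_{t}}$, and a short computation with Standard Facts~\ref{facts} (equivalently, the cohomology of $\cO(\pm 1)$ on $\bP^{1}$) shows $\Hom{\sD^{b}(\bK Q)}{N_{t+1}}{\Sigma^{n}N_{t}}=0$ for all $n$, so $N_{t+1}$ satisfies hypothesis~(ii) of Proposition~\ref{prop:silting-bounded}, while $N_{t-1}$ symmetrically satisfies hypothesis~(i). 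Hence Proposition~\ref{prop:silting-bounded} applies: $(\sA,\sB)$ is bounded above or bounded below, but not both (a bounded co-t-structure has silting co-heart). If $(\sA,\sB)$ is bounded below, I would take $s=N_{t+1}$: boundedness below forces some, hence (by the ``moreover'' part of Proposition~\ref{prop:silting-bounded}(ii)) every, $\Sigma^{k}s$ into $\sA$, so $\bigcap_{i\in\bZ}\Sigma^{i}\sA\neq 0$; the mirror of Corollary~\ref{cor:silting-bounded} then yields an essential co-slicing with co-slices $\Sigma^{n}\sC$ ($n\in\bZ$) and $\bigcap_{i}\Sigma^{i}\sA$, and the mirror of Corollary~\ref{cor:silting-bounded-induced} shows it induces $(\sA,\sB)$. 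The bounded-above case is symmetric, using $s=N_{t-1}$, hypothesis~(i), and Corollaries~\ref{cor:silting-bounded} and \ref{cor:silting-bounded-induced} directly. Again Theorem~\ref{thm:comparison} and Lemma~\ref{lem:compare} conclude.

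The hard part will be this last case, and within it two points: first, choosing the complement of the one-indecomposable co-heart so that one of the two Hom-vanishing hypotheses of Proposition~\ref{prop:silting-bounded} genuinely holds --- this rests on the explicit description of the silting objects of $\sD^{b}(\bK Q)$ --- and second, checking that the extra ``$\pm\infty$'' co-slice $\bigcap_{i}\Sigma^{i}\sA$ (or $\bigcap_{i}\Sigma^{i}\sB$) is non-zero, since it is precisely this that makes the co-slicing essential and thus subject to Theorem~\ref{thm:comparison}. The stable and silting-co-heart cases, by contrast, are purely formal consequences of Sections~\ref{sec:prelim}--\ref{sec:basic} together with \cite{MSSS,Bondarko}.
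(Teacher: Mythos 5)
Your proposal is correct and follows essentially the same route as the paper: dispose of the stable (zero co-heart) case via Example \ref{ex:stable}, the silting co-heart case via Example \ref{ex:bounded}, and the one-indecomposable (almost silting) co-heart case via Proposition \ref{prop:silting-bounded} together with Corollaries \ref{cor:silting-bounded} and \ref{cor:silting-bounded-induced} and their duals, feeding everything into Theorem \ref{thm:comparison} and Lemma \ref{lem:compare}. Your extra care about the trivial co-t-structures and about the non-vanishing of the $\infty$-co-slice (needed for essentialness) are points the paper treats only implicitly, but they do not change the argument.
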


\begin{proof}
First note that by \cite[Lemma 3.1(iv)]{HJY}, the co-heart $\sC$ of a co-t-structure $(\sA,\sB)$ is zero if and only if the co-t-structure is stable. By Example \ref{ex:stable}, all stable co-t-structures are, in fact, examples of co-slicings, and in particular are coarser than the co-slicings of Theorem \ref{thm:comparison}, and hence the co-t-structures they induce occur in the list in Section \ref{sec:stable} above.

Now suppose $\sC \neq 0$ and consider the possibilites for the size of the co-heart $\sC$. We must have $|\ind\sC|=1$ or $2$, since $\sC$ is partial silting, see \cite[Theorem 2.26]{AI}. If $|\ind\sC|=2$ then $\sC$ is silting, indeed its indecomposable objects are even a full exceptional collection which is well-known to generate  $\sD^b(\bK Q)$ (see \cite{GKR}, for instance). Hence, we have a bounded co-t-structure. Example \ref{ex:bounded} now guarantees that we have all bounded co-t-structures in the list in Section \ref{sec:bounded}.

Now suppose that $\ind \sC = \{c\}$. Since $\sC$ is partial silting, we may assume that $c= N_t$ for some $t\in \bZ$. The argument of Proposition~\ref{prop:co-slice_form} shows that the silting subcategories containing $N_t$ are $\add{N_t, \Sigma^i N_{t+1}}$ for some $i\geq 0$ and $\add{\Sigma^j N_{t-1}, N_t}$ for some $j \leq 0$. 
Thus, $\sC=\add{N_t}$ is an almost silting subcategory; consider the completion $\add{\Sigma^j N_{t-1}, N_t}$ for some $j\leq 0$. This satisfies the hypotheses of Propostion~\ref{prop:silting-bounded}(i), and hence the co-t-structure $(\sA,\sB)$ is bounded above or below.

Suppose $(\sA,\sB)$ is bounded above: In this case $\Sigma^i(\Sigma^j N_{t-1}) \in \sB$ for some $i\in \bZ$, so by Proposition~\ref{prop:silting-bounded}(i) $\Sigma^i(\Sigma^j N_{t-1}) \in \sB$ for all $i \in \bZ$. By Corollary~\ref{cor:silting-bounded} such a co-t-structure induces a co-slicing, which is then coarser than an exceptional co-slicing of Theorem~\ref{thm:comparison}. By Corollary~\ref{cor:silting-bounded-induced}, this co-slicing induces the co-t-structure $(\sA,\sB)$, and hence this co-t-structure appears in the list in Subsection~\ref{sec:bdd_above}.

Now suppose that $(\sA,\sB)$ is bounded below: In this case we consider the location of the object $N_{t+1}$ instead. We have $\Sigma^k N_{t+1} \in \sA$ for some $k \in \bZ$ because $(\sA,\sB)$ is bounded below. Now the object $N_{t+1}$ completes $\add{N_t}$ to a silting subcategory which satisfies the assumptions of Proposition~\ref{prop:silting-bounded}(ii). In particular, it follows from Proposition~\ref{prop:silting-bounded}(ii) that $\Sigma^k N_{t+1} \in \sA$ for all $k \in \bZ$. Using the dual of Corollary~\ref{cor:silting-bounded} and arguing as in the paragraph above shows that such a co-t-structure appears in the list in Subsection~\ref{sec:bdd_below}.
\end{proof}


\section{The co-stability manifold}
\label{sec:manifold}

Before computing the co-stability manifold of $\sD^b(\bK Q)$, we give a brief recap of pertinent facts from \cite{Co-stability}. It is defined in a manner analogous to Bridgeland's stability manifold \cite{Bridgeland}. 

\subsection{Co-stability conditions} \label{sec:recap-costability}
For this short section $\sT$ will denote an arbitrary Krull-Schmidt triangulated category. We shall make statements only for Krull-Schmidt categories, for the definitions in greater generality, see \cite{Co-stability}. All definitions and statements below are taken from \cite{Co-stability}.

In this section, we use the term \emph{co-slicing} in the sense of \cite[Definition 3.1]{Co-stability}, i.e. a co-slicing $\cQ$ will denote the generalised co-slicing $(\Phi,\{\cQ(\phi) \mid \phi \in \Phi\})$ where $\Phi = \bR$ and the automorphism of ordered sets $\lambda \colon \bR \to \bR$ is given by $\phi \mapsto \phi + 1$; see Example~\ref{ex:co-slice}. Note that, by \cite[Remark 3.8]{Co-stability}, such co-slicings are not essential in the sense of Definition~\ref{def:essential}.

A \emph{co-stability condition} on $\sT$ is a pair $(Z,\cQ)$, where $Z\colon K_0(\sT) \to \bC$ is a group homomorphism and $\cQ$ is a co-slicing of $\sT$ such that
\[
q\in \cQ(\phi), q \ncong 0 \implies Z(q) = m(q)\exp(i\pi \phi),
\]
with $m(q) >0$. The number $\phi$ is called the \emph{phase} of $q$.

There is a useful characterisation of co-stability conditions in terms of bounded co-t-structures and co-stability functions. 
A \emph{co-stability function} on an additive category $\sS$ is a group homomorphism
\[
Z\colon K_0^{\rm split}(\sS) \to \bC
\]
such that for each object $s\ncong 0$ we have $Z(s) \in H = \{r\exp(i \pi \phi) \mid r>0, 0<\phi \leq 1\}$. An object $0 \ncong s\in \sS$ is called \emph{$Z$-semistable} if and only if its indecomposable summands have the same phase.
We then have:

\begin{proposition}[\cite{Co-stability} Proposition 6.3] \label{prop:HN}
Giving a co-stability condition on $\sT$ is equivalent to giving a bounded co-t-structure in $\sT$ and a co-stability function $Z$ on its co-heart $\sC$ which satisfies the \emph{split Harder-Narasimhan property}: if $0 \ncong c_1,c_2 \in \sC$ are $Z$-semistable with $\phi(c_1) < \phi(c_2)$ then $\Hom{\sC}{c_1}{c_2}=0$. 
\end{proposition}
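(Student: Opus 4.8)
The plan is to prove the equivalence by exhibiting two mutually inverse constructions, following the pattern of \cite[Proposition 5.3]{Bridgeland} transported to the co-slicing setting; since the statement is \cite[Proposition 6.3]{Co-stability}, the argument below is essentially the one found there.

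From a co-stability condition $(Z,\cQ)$ I would first extract a bounded co-t-structure. Applying Lemma~\ref{lem:family} to the $\bR$-indexed co-slicing $\cQ$ with $\Phi_{-}=\{\phi\in\bR\mid\phi\leq 1\}$ and $\Phi_{+}=\{\phi\in\bR\mid\phi>1\}$ produces a co-t-structure $(\sA,\sB)$, and Lemma~\ref{lem:bounded}(iii) shows it is bounded, because the automorphism $\phi\mapsto\phi+1$ gives $\bigcup_{n\geq0}\lambda^{n}(\Phi_{-})=\bigcup_{n\geq0}\lambda^{-n}(\Phi_{+})=\bR$. A d\'evissage argument --- the $\cQ$-HN filtration of an object respects the cut --- identifies the co-heart $\sC=\sA\cap\Sigma^{-1}\sB$ with the full subcategory of objects all of whose $\cQ$-HN factors have phase in $(0,1]$; in particular $\cQ(\phi)\subseteq\sC$ for each $\phi\in(0,1]$. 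Restricting $Z$ to $K_0^{\mathrm{split}}(\sC)$ gives a co-stability function: for $0\ncong c\in\sC$ with HN factors $q_1,\dots,q_n$ of phases $\psi_1<\dots<\psi_n$ in $(0,1]$ we have $Z(c)=\sum_i m(q_i)\exp(i\pi\psi_i)$, a non-empty sum of non-zero vectors whose arguments lie in an interval $[\pi\psi_1,\pi\psi_n]\subseteq(0,\pi]$ of length $<\pi$, and hence $Z(c)\in H$. The split Harder--Narasimhan property then follows from Definition~\ref{def:co-slicing}(2): once one knows that a $Z$-semistable object of $\sC$ of phase $\phi$ lies in $\cQ(\phi)$ (see the final paragraph), objects $0\ncong c_1,c_2\in\sC$ that are $Z$-semistable with $\phi(c_1)<\phi(c_2)$ satisfy $\Hom{\sC}{c_1}{c_2}=\Hom{\sT}{c_1}{c_2}=0$, since $c_i\in\cQ(\phi(c_i))$.

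Conversely, given a bounded co-t-structure $(\sA,\sB)$ with co-heart $\sC$ and a co-stability function $Z$ on $\sC$ with the split HN property, I would put $\Phi=(0,1]$ and let $\cR(\phi)\subseteq\sC$ be the additive closure of the indecomposables of $\sC$ of $Z$-phase $\phi$. This is split-stability data on $\sC$ in the sense of Definition~\ref{def:split-stability}: axiom (2) is the Krull--Schmidt property together with $Z(c)\in H$ for $0\ncong c\in\sC$, and axiom (1) is precisely the split HN property. Proposition~\ref{prop:bounded_co-slicing} then yields a co-slicing indexed by $\bZ\times(0,1]$, and composing with the order isomorphism $\bZ\times(0,1]\rightiso\bR$, $(n,\phi)\mapsto n+\phi$ (which intertwines $(n,\phi)\mapsto(n+1,\phi)$ with $\phi\mapsto\phi+1$), gives an $\bR$-indexed co-slicing $\cQ$ with $\cQ(\phi)=\cR(\phi)$ for $\phi\in(0,1]$. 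Transporting $Z$ along the isomorphism $K_0(\sT)\cong K_0^{\mathrm{split}}(\sC)$ valid for any bounded co-t-structure (see \cite{Bondarko}), the defining property of a co-stability condition is immediate: for $0\ncong q\in\cQ(\phi)=\cR(\phi)$ with $\phi\in(0,1]$ every indecomposable summand of $q$ has $Z$-value of argument $\pi\phi$, so $Z(q)=m(q)\exp(i\pi\phi)$ with $m(q)=|Z(q)|>0$, and the general case follows by applying $\Sigma^{n}$.

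The step that makes the two constructions mutually inverse --- and the one I expect to demand the most care --- is the identification, for $\phi\in(0,1]$, of $\cQ(\phi)$ with the full subcategory of $Z$-semistable objects of $\sC$ of phase $\phi$. One inclusion is immediate from the definition of phase. For the other, let $c\in\sC$ be indecomposable: by the d\'evissage identification its $\cQ$-HN factors lie in $\cQ(\psi_i)\subseteq\sC$ with $\psi_i\in(0,1]$, and since a co-heart is extension closed and partial silting (so $\Hom{\sT}{\sC}{\Sigma\sC}=0$) every stage of the HN tower of $c$ is a split extension in $\sC$; hence the tower splits, $c$ lies in a single $\cQ(\psi)$, and comparing arguments of $Z$-values shows $\psi$ is the $Z$-phase of $c$. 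Granting this, both round trips recover the starting data: the co-slicing because it is determined by its slices on $(0,1]$ together with $\Sigma$, and the co-stability function because $Z$ on $\sC$ determines $Z$ on $K_0(\sT)$. The only genuinely delicate ingredients are the d\'evissage lemma relating the $\cQ$-HN filtration to the cut, and a consistent bookkeeping of the window $H=\{r\exp(i\pi\phi)\mid 0<\phi\leq 1\}$ against the chosen cut throughout.
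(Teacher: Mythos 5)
The paper does not prove this proposition: it is recalled verbatim from \cite{Co-stability} (Proposition 6.3 there) and used as a black box, so there is no in-paper argument to compare against. Your reconstruction is nevertheless correct in outline and follows the expected route, namely the co-t-structure mirror of Bridgeland's Proposition 5.3: the cut of $\bR$ at phase $1$ fed into Lemma~\ref{lem:family} and Lemma~\ref{lem:bounded}(iii), the split-stability data on $\sC$ fed into Proposition~\ref{prop:bounded_co-slicing} together with the order isomorphism $\bZ\times(0,1]\cong\bR$, Bondarko's isomorphism $K_0(\sT)\cong K_0^{\mathrm{split}}(\sC)$, and the identification of $\cQ(\phi)$ with the $Z$-semistable objects of phase $\phi$ using that the co-heart is partial silting (so HN towers inside $\sC$ split) are exactly the right ingredients. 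The one step you flag as delicate --- the d\'evissage identifying $\sA\cap\Sigma^{-1}\sB$ with $(\bigcup_{0<\phi\leq 1}\cQ(\phi))^{+}$ --- is indeed where the remaining work lives, and it goes through by the usual splitting argument: for $t\in\sA$ with top HN factor $q_n\in\cQ(\phi_n)$, $\phi_n>1$, the map $t\to q_n$ vanishes by Definition~\ref{def:co-slicing}(2), so $\Sigma^{-1}q_n$ splits off and one inducts; note that this argument must be run with some care since, as the example after Proposition~\ref{prop:exceptional} shows, HN filtrations for co-slicings are not unique.
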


A co-slicing $\cQ$ of a triangulated category $\sT$ is said to satisfy condition (S) if for any two indecomposable objects $q_1, q_2 \in \cQ(\phi)$ with $q_1 \ncong q_2$ then $\Hom{\sT}{q_1}{q_2}=0$. Denote by $\Coslice{\sT}$ the set of co-slicings of $\sT$ satisfying condition (S). Note that these co-slicings are not essential, see \cite[Remark 3.8]{Co-stability}.

Recall from \cite[Proposition 4.3]{Co-stability} that for two co-slicings $\cQ,\cR \in \Coslice{\sT}$, the function
\[
d(\cQ,\cR) = \inf \{\epsilon >0 \mid \cQ(\phi) \subseteq \cR([\phi-\epsilon,\phi+\epsilon]) \textrm{ for each } \phi \in \bR\}
\]
defines a metric on $\Coslice{\sT}$. 

Now consider the product space $K_0(\sT)^* \times \Coslice{\sT}$.
The \emph{co-stability manifold} of $\sT$ is the topological subspace
\[
\Costab{\sT} \subseteq K_0(\sT)^* \times \Coslice{\sT},
\]
consisting of co-stability conditions $(Z,\cQ)$.

Recall also the following principal result from \cite{Co-stability}:

\begin{theorem}[\cite{Co-stability} Theorem 8.2]
The topological space $\Costab{\sT}$ is a topological manifold which, if non-empty, has dimension $2\cdot \textrm{rank}\, K_0(\sT)$.
\end{theorem}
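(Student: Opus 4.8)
The plan is to adapt Bridgeland's strategy from \cite{Bridgeland} to the co-stability setting: one exhibits the forgetful map recording only the central charge as a local homeomorphism onto an open subset of a finite-dimensional complex vector space. Since a co-stability condition requires a bounded co-t-structure (Proposition~\ref{prop:HN}), fixing one and using that $K_0^{\mathrm{split}}(\sC) \cong K_0(\sT)$ for its co-heart $\sC$ shows, as is standard in this context, that $K_0(\sT)$ is free of finite rank $n := \mathrm{rank}\, K_0(\sT)$; hence $K_0(\sT)^* = \Hom{\bZ}{K_0(\sT)}{\bC}$ is a complex vector space, a topological manifold of real dimension $2n$. Let $\mathcal{Z} \colon \Costab{\sT} \to K_0(\sT)^*$ send $(Z,\cQ)$ to $Z$; it is continuous, $\Costab{\sT}$ carrying the subspace topology from $K_0(\sT)^* \times \Coslice{\sT}$. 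It then suffices to prove that $\mathcal{Z}$ is a local homeomorphism onto an open subset of $K_0(\sT)^*$, which I would do by establishing local injectivity, openness, and continuity of local inverses.

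For local injectivity, suppose $(Z,\cQ)$ and $(Z,\cR)$ are co-stability conditions with $d(\cQ,\cR) < 1$. A co-slicing within distance $<1$ of $\cQ$ determines the same bounded co-t-structure $(\sA,\sB)$, so $\cQ$ and $\cR$ have the same co-heart $\sC$. By Proposition~\ref{prop:HN} a co-stability condition is the same as a pair consisting of a bounded co-t-structure and a co-stability function on its co-heart satisfying the split Harder-Narasimhan property; as $(Z,\cQ)$ and $(Z,\cR)$ correspond to the same pair $((\sA,\sB), Z|_\sC)$, we conclude $\cQ = \cR$.

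The main obstacle is openness together with continuity of the local section. Given $(Z,\cQ) \in \Costab{\sT}$ with co-heart $\sC$, one must show that every $W \in K_0(\sT)^*$ sufficiently close to $Z$ equals $\mathcal{Z}(W,\cR)$ for a unique co-stability condition $(W,\cR)$, with $d(\cQ,\cR)$ small and depending continuously on $W$. By Proposition~\ref{prop:HN} this reduces to checking that $W|_\sC$ (via $K_0^{\mathrm{split}}(\sC) \cong K_0(\sT)$) is again a co-stability function on $\sC$ satisfying the split HN property. The subtlety is that $\sC$ may contain infinitely many indecomposables, so all their phases must be controlled simultaneously; here condition (S) is decisive. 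Indeed, whenever $c_1 \ncong c_2$ are indecomposables of $\sC$ with $\Hom{\sT}{c_1}{c_2} \neq 0$, condition (S) and the split HN property force the strict inequality $\varphi_Z(c_1) > \varphi_Z(c_2)$; and since $\sC$ is a partial silting subcategory generating $\sT$ (the co-t-structure being bounded), one argues that these strict inequalities, the constraints $0 < \varphi_Z(c) \leq 1$, and the moduli $|Z(c)| > 0$ are uniformly bounded away from their degenerate values. These are open conditions, so for $W$ close enough to $Z$ they persist, giving a co-stability function $W|_\sC$ with the split HN property; Proposition~\ref{prop:HN} then yields a co-stability condition $(W,\cR)$ built from the same bounded co-t-structure, and comparing the phases of semistable objects of $\cQ$ and $\cR$ shows $d(\cQ,\cR) \to 0$ as $W \to Z$ and that $W \mapsto \cR$ is continuous.

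Combining these, $\mathcal{Z}$ is continuous, locally injective, open, and admits continuous local sections, hence is a local homeomorphism onto an open subset $U \subseteq K_0(\sT)^*$. As $U$ is open in the $2n$-dimensional real manifold $K_0(\sT)^* \cong \bC^n$, it follows that $\Costab{\sT}$ is a topological manifold of dimension $2n = 2 \cdot \mathrm{rank}\, K_0(\sT)$, as claimed.
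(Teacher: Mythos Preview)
The paper does not give a proof of this theorem: it is stated in Section~\ref{sec:recap-costability} purely as a recalled result from \cite{Co-stability} (Theorem~8.2 there), with no argument supplied. So there is no ``paper's own proof'' to compare against. Your outline does follow the Bridgeland-style strategy of showing that the forgetful map to $K_0(\sT)^*$ is a local homeomorphism, which is indeed the approach taken in \cite{Co-stability}; at that level of description the approaches agree.

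That said, your sketch has a genuine gap in the openness/deformation step. You write that, because $\sC$ is partial silting and generates $\sT$, ``one argues that these strict inequalities, the constraints $0 < \varphi_Z(c) \leq 1$, and the moduli $|Z(c)| > 0$ are uniformly bounded away from their degenerate values.'' This is exactly the hard content of any deformation theorem, and nothing you have said explains why such uniform bounds should hold when $\sC$ has infinitely many indecomposables: partial silting controls positive-degree $\Hom$'s, not accumulation of phases or of $|Z(c)|$ towards $0$. In Bridgeland's setting this is where local finiteness (or the support property) enters, and in \cite{Co-stability} the corresponding argument is the substance of the proof; you have asserted it rather than proved it. Your local injectivity step is also too quick: the claim that co-slicings at distance $<1$ determine the \emph{same} bounded co-t-structure is not automatic, since objects of phase near $0$ or $1$ in $\cQ$ may lie in a different co-heart for $\cR$. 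One can still prove local injectivity, but not via this shortcut.
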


We are now ready to compute the co-stability manifold of $\sD^b(\bK Q)$.

\subsection{The co-stability manifold of $\sD^b(\bK Q)$} 

Now let $\sT=\sD^b(\bK Q)$ and consider the following parametrisation of co-stability conditions on $\sD^{b}(\bK Q)$.

\begin{lemma} \label{lem:parametrisation}
There is a bijection 
$$f:\bZ\times\{(x,y)\in\bR^{2}\,|\,x<y\}\times\bR_{+}\times\bR_{+}\rightarrow\Costab{\sD^{b}(\bK Q)}.$$ The quintuple $(n,\phi_{1},\phi_{0},m_{1},m_{0})$ is sent to the co-stability condition, $(Z,\cQ)$, whose co-slices are given by
$$\cQ(\phi_{0})=\add{N_{n}} \quad and \quad \cQ(\phi_{1})=\add{N_{n+1}},$$
and whose values of $Z$ are determined by
$$Z(N_{n})=m_{0}\exp(i\pi\phi_{0}) \quad and \quad Z(N_{n+1})=m_{1}\exp(i\pi\phi_{1}).$$
\end{lemma}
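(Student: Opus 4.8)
The plan is to show in turn that $f$ is well defined, injective and surjective; all the substance is in surjectivity, which rests on the classification of co-slicings of $\sD^{b}(\bK Q)$ obtained above.

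\emph{Well-definedness.} Since $\{N_n,N_{n+1}\}$ is a full exceptional collection, $[N_n],[N_{n+1}]$ form a $\bZ$-basis of $K_0(\sD^{b}(\bK Q))\cong\bZ^{2}$, so the prescribed values extend uniquely to a group homomorphism $Z\colon K_0(\sD^{b}(\bK Q))\to\bC$. Define $\cQ$ on $\bR$ by letting $\cQ(\phi)$ be the additive closure of $\{\Sigma^{k}N_n\mid\phi_0+k=\phi\}\cup\{\Sigma^{k}N_{n+1}\mid\phi_1+k=\phi\}$, with $\lambda(\phi)=\phi+1$. After relabelling its (countable) set of non-zero co-slices as a subset of $\bR$ — intertwining $\lambda$ with $\Sigma$ as in Proposition~\ref{prop:exceptional} — this is the exceptional co-slicing $\cE_{p}$ built from $\{N_n,N_{n+1}\}$, with $p=\lceil\phi_0-\phi_1\rceil$, when $\phi_0-\phi_1\notin\bZ$; and it is the co-slicing of Example~\ref{ex:bounded} attached to the bounded co-t-structure whose co-heart is the silting subcategory $\add{N_n,\Sigma^{l}N_{n+1}}$, where $l=\phi_0-\phi_1$, when $\phi_0-\phi_1=l\in\bZ_{\geq 1}$. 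As the axioms in Definition~\ref{def:co-slicing} depend only on the ordered index set and the assignment of co-slices, $(\bR,\{\cQ(\phi)\})$ is a co-slicing in both cases. Moreover $(Z,\cQ)$ is a co-stability condition: any $0\ncong q\in\cQ(\phi)$ is a finite direct sum of copies of the one or two indecomposables generating $\cQ(\phi)$, each of which has $Z$-value a positive real multiple of $\exp(i\pi\phi)$, so $Z(q)=m(q)\exp(i\pi\phi)$ with $m(q)>0$. Finally $(Z,\cQ)$ satisfies condition (S): when $\cQ(\phi)$ contains two indecomposables $\Sigma^{a}N_n$ and $\Sigma^{a'}N_{n+1}$ one has $a'-a=\phi_0-\phi_1\in\bZ_{\geq 1}$, and since $\{N_n,N_{n+1}\}$ corresponds under the Beilinson equivalence \cite{Beilinson} to $\{\cO(n),\cO(n+1)\}$ we get $\Hom{\sD^{b}(\bK Q)}{\Sigma^{a}N_n}{\Sigma^{a'}N_{n+1}}\cong H^{a'-a}(\bP^{1},\cO(1))=0$ and $\Hom{\sD^{b}(\bK Q)}{\Sigma^{a'}N_{n+1}}{\Sigma^{a}N_n}=0$. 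Hence $f$ takes values in $\Costab{\sD^{b}(\bK Q)}$.

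\emph{Injectivity.} From $(Z,\cQ)=f(n,\phi_1,\phi_0,m_1,m_0)$ one recovers the data: the semistable indecomposables of $\cQ$ are $\{\Sigma^{k}N_n\mid k\in\bZ\}\cup\{\Sigma^{k}N_{n+1}\mid k\in\bZ\}$, and the only members of this set lying in the fixed enumeration $\{N_t\}_{t\in\bZ}$ are $N_n$ and $N_{n+1}$, which pins down $n$. Each of $N_n,N_{n+1}$ is semistable of a single phase (two distinct phases would force $\Hom{\sD^{b}(\bK Q)}{N_n}{N_n}=0$ by Definition~\ref{def:co-slicing}(2)), necessarily $\phi_0$ and $\phi_1$; and $m_i=|Z(N_{n+i})|$. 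So distinct quintuples have distinct images.

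\emph{Surjectivity.} Let $(Z,\cQ)$ be any co-stability condition on $\sD^{b}(\bK Q)$. Its underlying co-slicing is indexed by $\bR$, satisfies condition (S), and is non-trivial (a trivial co-slicing has some $\cQ(\phi)=\sD^{b}(\bK Q)$, violating (S) since $\sD^{b}(\bK Q)$ has non-orthogonal indecomposables). Deleting the zero co-slices yields an essential co-slicing, to which the classification applies. By Proposition~\ref{prop:no_regulars} no regular object is semistable, and by Proposition~\ref{prop:co-slice_form} each co-slice has one, two or countably many indecomposables; the countable case cannot occur here, for such a co-slice is $\Sigma$-stable whereas $\Sigma\cQ(\phi)=\cQ(\phi+1)\ne\cQ(\phi)$. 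If some co-slice has two indecomposables, Lemma~\ref{lem:two_objects} specifies the essential co-slicing completely (with $i\geq 1$, as $i=0$ is excluded by (S)); if every co-slice has exactly one, Corollary~\ref{cor:one_object}(ii) identifies it with some $\cE_{p}$, and $p=\infty$ is impossible because the order on $\cE_{\infty}$ — in which $(k,1)<(l,0)$ for all $k,l$ — admits no order-embedding into $\bR$ intertwining $\eta$ with $\phi\mapsto\phi+1$. In every case there are $t\in\bZ$ and reals $\phi_1<\phi_0$ such that $N_t$ is semistable of phase $\phi_0$ and $N_{t+1}$ of phase $\phi_1$ (the inequality is strict because $\Hom{\sD^{b}(\bK Q)}{N_t}{N_{t+1}}\ne 0$ forbids $\phi_0\leq\phi_1$), and the classification shows $\cQ$ is determined by this; comparing with the construction above, $\cQ$ equals the co-slicing of $f(t,\phi_1,\phi_0,|Z(N_{t+1})|,|Z(N_t)|)$. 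The defining property of a co-stability condition moreover forces $Z(N_t)=|Z(N_t)|\exp(i\pi\phi_0)$ and $Z(N_{t+1})=|Z(N_{t+1})|\exp(i\pi\phi_1)$, so $Z$ agrees with the homomorphism attached to this quintuple on the basis $[N_t],[N_{t+1}]$, hence everywhere. Thus $(Z,\cQ)$ lies in the image of $f$.

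The main obstacle is the surjectivity step: it is there that one must use the whole classification of co-slicings of $\sD^{b}(\bK Q)$, and in particular rule out $\Sigma$-stable co-slices and the ordering $\cE_{\infty}$ once one passes to the $\bR$-indexed setting forced by a co-stability condition. Well-definedness and injectivity, by contrast, are essentially bookkeeping resting on Propositions~\ref{prop:exceptional} and \ref{prop:co-slice_form}.
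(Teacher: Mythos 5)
Your proof is correct, but your surjectivity argument takes a genuinely different route from the paper's. The paper invokes Proposition~\ref{prop:HN}, which converts a co-stability condition into a bounded co-t-structure together with a co-stability function on its co-heart satisfying the split Harder--Narasimhan property, and then simply reads off the quintuple from the enumeration of bounded co-t-structures by triples $(m,n,p)$ with co-heart $\add{\Sigma^{m}N_{n},\Sigma^{m+p}N_{n+1}}$ obtained in Section~\ref{sec:bounded}; the parameters $\phi_{0},\phi_{1}$ arise by unwinding the phases $\phi_{0}',\phi_{1}'\in(0,1]$ of the co-stability function on that co-heart. You instead work directly with the underlying $\bR$-indexed co-slicing, strip its zero co-slices, and feed the resulting essential co-slicing into the classification of Section~\ref{sec:co-slice_class}; this obliges you to rule out by hand the $\Sigma$-stable co-slices of Proposition~\ref{prop:co-slice_form}\ref{item:countable} and the ordering $\cE_{\infty}$, both of which you do correctly (a nonzero $\Sigma$-stable co-slice would force $\Hom{\sT}{\cQ(\phi)}{\cQ(\phi+1)}=0$ with $\cQ(\phi)=\cQ(\phi+1)$, and $\cE_{\infty}$ admits no order-embedding into $\bR$ intertwining $\eta$ with $\phi\mapsto\phi+1$). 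The paper's route is shorter precisely because Proposition~\ref{prop:HN} absorbs this work; yours is more self-contained at the level of co-slicings and, as a by-product, spells out the forward direction (extension of $Z$ over the basis $[N_{n}],[N_{n+1}]$, verification of condition (S), identification with $\cE_{p}$ for $p=\lceil\phi_{0}-\phi_{1}\rceil$) which the paper dismisses as clear. Both arguments ultimately rest on the same classification machinery, packaged once through co-t-structures and once through co-slicings.
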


\begin{proof}
It is clear that the assignment in the statement of the lemma determines a co-stability condition on $\sD^{b}(\bK Q)$. 

We shall show that a co-stability condition on $\sD^{b}(\bK Q)$ determines such a quintuple by using Proposition~\ref{prop:HN} and the fact that any bounded co-t-structure is determined by its co-heart. 

The co-hearts of the bounded co-t-structures can be determined easily from Section \ref{sec:bounded}. In particular, each bounded co-t-structure is uniquely determined by a triple $(m,n,p)\in\bZ\times\bZ\times\bN\cup\{0\}$, where the triple $(m,n,p)$ determines the co-t-structure with co-heart $\sC(m,n,p)$ whose indecomposable objects are $\Sigma^{m}N_{n}$ and $\Sigma^{m+p}N_{n+1}$. 

A co-stability function $Z$ on $\sC(m,n,p)$ is determined by the values
$$Z(\Sigma^{m}N_{n})=m_{0}\exp(i\pi\phi_{0}') \quad {\rm and} \quad Z(\Sigma^{m+p}N_{n+1})=m_{1}\exp(i\pi\phi_{1}'),$$
with $\phi_{0}',\phi_{1}'\in (0,1]$ and $m_{0},m_{1}\in\bR_{+}$. Note that in the case that $p=0$, we must have $\phi_{1}'<\phi_{0}'$. Now, setting $\phi_{0}=\phi_{0}'-m$ and $\phi_{1}=\phi_{1}'-m-p$, we obtain a pair $(\phi_{1},\phi_{0})\in\bR^{2}$ satisfying $\phi_{1}<\phi_{0}$. Together, these values give us the desired quintuple, thus, giving the desired bijection.
\end{proof}

\begin{definitions} \label{def:component}
Let $\cQ$ be a co-slicing of $\sD^{b}(\bK Q)$. Write $\ss{\cQ}$ for the semistable indecomposable objects of the co-slicing $\cQ$.

Let $\sC(n)$ be the subset of $\Costab{\sD^{b}(\bK Q)}$ consisting of co-stability conditions $(Z,\cQ)$ such that $\ss{\cQ}=\{\Sigma^{i}N_{n},\Sigma^{j}N_{n+1}\,|\, i,j\in\bZ\}$.
\end{definitions}

We now have the following lemma.

\begin{lemma} \label{lem:continuous}
Let $n\in\bZ$ and consider $f_{n}:\{(x,y)\in\bR^{2}\,|\,x<y\}\times\bR_{+}\times\bR_{+}\rightarrow\sC(n)$, given as in Lemma \ref{lem:parametrisation}. Then $f_{n}$ is continuous.
\end{lemma}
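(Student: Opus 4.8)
The plan is to show that $f_n$ is continuous by verifying that each of the data defining the co-stability condition $f_n(x,y,m_1,m_0)$ varies continuously with the parameters, using the explicit parametrisation of Lemma~\ref{lem:parametrisation}. Recall that $\Costab{\sD^b(\bK Q)}$ carries the subspace topology from $K_0(\sT)^* \times \Coslice{\sT}$, so continuity of $f_n$ amounts to continuity of the two component maps: the central charge component, which lands in $K_0(\sT)^* \cong \bC^2$, and the co-slicing component, which lands in the metric space $\Coslice{\sT}$ equipped with the metric $d$ recalled before Definition~\ref{def:component}. The first component is routine: writing $K_0(\sD^b(\bK Q))$ with basis $\{[N_n],[N_{n+1}]\}$, the central charge $Z$ of $f_n(x,y,m_1,m_0)$ is determined by $Z([N_n]) = m_0 \exp(i\pi y)$ and $Z([N_{n+1}]) = m_1 \exp(i\pi x)$, and these are manifestly continuous functions of $(x,y,m_1,m_0)$; hence the induced element of $K_0(\sT)^* \cong \bC^2$ depends continuously on the parameters.

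The substantive part is continuity of the co-slicing component, i.e. continuity of $(x,y,m_1,m_0) \mapsto \cQ_{(x,y,m_1,m_0)} \in (\Coslice{\sD^b(\bK Q)}, d)$. I would first describe the co-slicing $\cQ = \cQ_{(x,y,m_1,m_0)}$ concretely: by Example~\ref{ex:co-slice} its phases run over $\bR$, and by the construction underlying Lemma~\ref{lem:parametrisation} the only non-zero co-slices are the $\bZ$-translates of $\cQ(y) = \add{N_n}$ and $\cQ(x) = \add{N_{n+1}}$; explicitly $\cQ(y + k) = \add{\Sigma^k N_n}$ and $\cQ(x + k) = \add{\Sigma^k N_{n+1}}$ for $k \in \bZ$, and $\cQ(\phi) = 0$ for all other $\phi$. (Note the semistable indecomposables are $\{\Sigma^i N_n, \Sigma^j N_{n+1} \mid i,j \in \bZ\}$, which is exactly the condition defining $\sC(n)$, so $f_n$ indeed lands in $\sC(n)$.) Now fix parameters $(x_0, y_0, m_1^0, m_0^0)$ and let $(x, y, m_1, m_0)$ be nearby, writing $\cQ_0$ and $\cQ$ for the two co-slicings. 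The key observation is that since $m_0, m_1$ do not affect the co-slices at all, $d(\cQ, \cQ_0)$ depends only on $x, x_0, y, y_0$; moreover from the explicit description, $\cQ_0(y_0 + k) = \cQ(y + k)$ and $\cQ_0(x_0 + k) = \cQ(x + k)$ as subcategories, and the non-zero co-slices of $\cQ_0$ sit at phases $\{y_0 + k, x_0 + k \mid k \in \bZ\}$ while those of $\cQ$ sit at phases $\{y + k, x + k \mid k \in \bZ\}$. Hence for $\epsilon > |x - x_0|$ and $\epsilon > |y - y_0|$ one has $\cQ_0(\phi) \subseteq \cQ([\phi - \epsilon, \phi + \epsilon])$ for every $\phi$ (a non-zero co-slice of $\cQ_0$ at phase $y_0 + k$ equals a non-zero co-slice of $\cQ$ at phase $y + k$, which lies in the interval $[y_0 + k - \epsilon, y_0 + k + \epsilon]$, and similarly for the $x$-phases; for $\phi$ not of this form $\cQ_0(\phi) = 0$), so $d(\cQ, \cQ_0) \leq \max\{|x - x_0|, |y - y_0|\}$. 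This bound goes to $0$ as $(x, y, m_1, m_0) \to (x_0, y_0, m_1^0, m_0^0)$, giving continuity of the co-slicing component.

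Combining the two, $f_n$ is continuous as a map into $K_0(\sT)^* \times \Coslice{\sT}$, hence into the subspace $\sC(n) \subseteq \Costab{\sD^b(\bK Q)}$. The main obstacle, and the only point requiring care, is the estimate $d(\cQ, \cQ_0) \leq \max\{|x - x_0|, |y - y_0|\}$: one must check from the definition of $d$ that perturbing the phases $x, y$ by a small amount perturbs the whole co-slicing (all $\bZ$-translates simultaneously) by the same small amount in the metric $d$, and in particular that the $\lambda$-equivariance $\Sigma \cQ(\phi) = \cQ(\phi + 1)$ does not cause the translated phases to interact badly — here it helps that the two "ladders" of phases $\{y + k\}$ and $\{x + k\}$ are both translated by the \emph{same} shifts of the respective base phases, so no crossing or collision of co-slices occurs for small perturbations. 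Once this is in hand the rest is immediate.
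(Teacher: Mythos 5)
Your proposal is correct and follows the same route as the paper, whose proof is only a one-line sketch ("compose $f_n$ with the two canonical projections and observe continuity"); you simply supply the details, in particular the explicit estimate $d(\cQ,\cQ_0)\leq\max\{|x-x_0|,|y-y_0|\}$ for the co-slicing component, which is exactly the point the paper leaves to the reader.
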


\begin{proof}
Considering the composition of $f_{n}$ with the canonical projections from $\sC(n)\rightarrow\Coslice{\sD^{b}(\bK Q)}$ and $\sC(n)\rightarrow K_{0}(\sD^{b}(\bK Q))^{*}$, one can easily see that $f_{n}$ is continuous.
\end{proof}

\begin{lemma} \label{lem:component}
Let $(Z,\cQ)$ and $(W,\cR)$ be two co-stability conditions on $\sD^{b}(\bK Q)$. $(Z,\cQ)$ and $(W,\cR)$ are in the same path component of $\Costab{\sD^{b}(\bK Q)}$ if and only if their semistable indecomposable objects coincide (possibly with different phases).
\end{lemma}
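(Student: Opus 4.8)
The plan is to promote the bijection $f$ of Lemma~\ref{lem:parametrisation} to a homeomorphism, after which the statement reduces to the obvious description of the path components of a disjoint union of $\bZ$ copies of a convex set.

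First I would record that two co-stability conditions have the same semistable indecomposable objects (possibly carrying different phases) precisely when they lie in a common subset $\sC(n)$. By Lemma~\ref{lem:parametrisation}, every co-stability condition is $f(n,\phi_{1},\phi_{0},m_{1},m_{0})$ for a unique $n\in\bZ$, and then its set of semistable indecomposables is $\ss{\cQ}=\{\Sigma^{i}N_{n},\Sigma^{j}N_{n+1}\,|\,i,j\in\bZ\}$; this set determines $n$ (for instance, $N_{n}$ and $N_{n+1}$ are its only members that are stalk complexes concentrated in a single degree, and these recover the pair $\{n,n+1\}$, hence $n$). Thus $\{\sC(n)\,|\,n\in\bZ\}$ is a partition of $\Costab{\sD^{b}(\bK Q)}$, and the lemma is equivalent to the claim that the $\sC(n)$ are exactly the path components.

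The crucial step is to show that $f$ is a homeomorphism. Its domain $D=\bZ\times\{(x,y)\in\bR^{2}\,|\,x<y\}\times\bR_{+}\times\bR_{+}$ is a topological $4$-manifold, being a disjoint union of $\bZ$ copies of an open convex subset of $\bR^{4}$, and by \cite[Theorem 8.2]{Co-stability} the space $\Costab{\sD^{b}(\bK Q)}$ is a topological manifold of dimension $2\cdot\textrm{rank}\,K_{0}(\sD^{b}(\bK Q))=4$. By Lemma~\ref{lem:parametrisation} the map $f$ is a bijection, and by Lemma~\ref{lem:continuous} it is continuous on each sheet $\{n\}\times\{(x,y)\,|\,x<y\}\times\bR_{+}\times\bR_{+}$, hence continuous on all of $D$ since $\bZ$ is discrete. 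I would then invoke the invariance of domain theorem, applied in local charts: a continuous injection between topological manifolds of equal dimension is an open map, so $f$ is an open continuous bijection, that is, a homeomorphism. This appeal to invariance of domain — together with the bookkeeping needed to confirm that both sides are $4$-manifolds — is the one ingredient that is not purely formal; everything else is routine point-set topology.

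Finally, under the homeomorphism $f$ we have $\sC(n)=f(\{n\}\times U)$, where $U=\{(x,y)\in\bR^{2}\,|\,x<y\}\times\bR_{+}\times\bR_{+}$ is convex and hence path connected, and the sets $\sC(n)$ are pairwise disjoint and open, being the images of the clopen sheets $\{n\}\times U$ of $D$. For any path $\gamma$ in $\Costab{\sD^{b}(\bK Q)}$ the preimages $\gamma^{-1}(\sC(n))$ form an open partition of the connected interval $[0,1]$, so $\gamma$ has image inside a single $\sC(n)$; combined with the path connectedness of each $\sC(n)$, this shows the $\sC(n)$ are precisely the path components. The lemma then follows: two co-stability conditions lie in the same path component if and only if they lie in the same $\sC(n)$, if and only if their semistable indecomposable objects coincide.
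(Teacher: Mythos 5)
Your argument is correct, but for the harder (``only if'') direction it takes a genuinely different route from the paper. The paper works directly with the metric $d$ on $\Coslice{\sD^{b}(\bK Q)}$: given a path $\pi$ with projection $\pi'$ to the space of co-slicings, it uses \cite[Remark 3.8]{Co-stability} to produce, near any parameter $t_{0}$, an $\epsilon_{0}>0$ such that $d(\cQ_{t},\cQ_{t_{0}})<\epsilon_{0}$ forces $\ss{\cQ_{t}}=\ss{\cQ_{t_{0}}}$; thus $t\mapsto\ss{\cQ_{t}}$ is locally constant along the path, hence constant, and no path can leave a $\sC(n)$. You instead upgrade the bijection $f$ of Lemma~\ref{lem:parametrisation} to a homeomorphism by combining Lemma~\ref{lem:continuous}, the manifold structure of $\Costab{\sD^{b}(\bK Q)}$ from \cite[Theorem 8.2]{Co-stability}, and Brouwer's invariance of domain applied in charts; openness of the sets $\sC(n)$ then makes the identification of path components purely formal. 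Both arguments are valid and neither is circular (Lemmas~\ref{lem:parametrisation} and~\ref{lem:continuous} and the cited Theorem 8.2 do not depend on this lemma). The paper's proof is the more elementary and self-contained one, using only the metric and the separation property of co-slices; yours is shorter and buys more, since it establishes en route that $f$ is a homeomorphism, which is essentially the content of Theorem~\ref{thm:manifold} (where the paper instead exhibits an explicit continuous inverse $g_{n}$) --- at the price of invoking the heavier inputs of the manifold theorem and invariance of domain. One small inaccuracy worth fixing: your parenthetical reason that $\ss{\cQ}$ determines $n$ (``$N_{n}$ and $N_{n+1}$ are its only members that are stalk complexes concentrated in a single degree'') is off, since every indecomposable object of $\sD^{b}(\bK Q)$ is such a stalk complex; the correct, and immediate, reason is that the objects $N_{t}$, $t\in\bZ$, are pairwise non-isomorphic even up to suspension, so the set $\{\Sigma^{i}N_{n},\Sigma^{j}N_{n+1}\mid i,j\in\bZ\}$ recovers $\{n,n+1\}$ and hence $n$.
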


\begin{proof}
By Lemma \ref{lem:parametrisation} the co-stability conditions $(Z,\cQ)$ and $(W,\cR)$  are given by quintuples $(n,\phi_{1},\phi_{0},m_{1},m_{0})$ and $(n',\phi_{1}',\phi_{0}',m_{1}',m_{0}')$.

If the semistable indecomposable objects of $(Z,\cQ)$ and $(W,\cR)$ coincide, then $n=n'$. It is clear that the $\phi_{i}$ and $m_{i}$ can be varied continuously to obtain $\phi_{i}'$ and $m_{i}'$ for $i=0,1$. Using Lemma \ref{lem:continuous}, one obtains that $(Z,\cQ)$ and $(W,\cR)$ lie in the same path component.

Conversely, suppose $(Z,\cQ)$ and $(W,\cR)$ satisfy $\ss{\cQ}\neq\ss{\cR}$ but lie in the same path component of $\Costab{\sD^{b}(\bK Q)}$. Then there is a path $\pi:[0,1]\rightarrow\Costab{\sD^{b}(\bK Q)}$ from $(Z,\cQ)$ to $(W,\cR)$. Write $\pi':[0,1]\rightarrow\Coslice{\sD^{b}(\bK Q)}$ for the composition of $\pi$ with the canonical projection $\Costab{\sD^{b}(\bK Q)}\rightarrow\Coslice{\sD^{b}(\bK Q)}$, and set $\cQ_{t}:=\pi'(t)$ for $t\in[0,1]$.

Since $\ss{\cQ}\neq\ss{\cR}$, there exists $t_{0}\in (0,1]$ such that $\ss{\cQ_{t_{0}}}\neq\ss{\cQ}$. If there is a smallest such $t_{0}$, then for each $\delta >0$ there exists $t\in [0,1]$ such that $|t-t_{0}|<\delta$ and $\ss{\cQ_{t}}\neq\ss{\cQ_{t_{0}}}$. Now $\sD^{b}(\bK Q)$ is Krull-Schmidt, so by \cite[Remark 3.8]{Co-stability} there exists $0<\epsilon_{t_{0}}<\frac{1}{2}$ such that, for all $\phi_{0}\in\bR$, within each interval $[\phi_{0}-\epsilon_{t_{0}},\phi_{0}+\epsilon_{t_{0}}]$ there is at most one $\cQ_{t_{0}}(\phi)\neq 0$. Similarly, there exists $0<\epsilon_{t}<\frac{1}{2}$ such that, for all $\phi_{0}\in\bR$, within each interval $[\phi_{0}-\epsilon_{t},\phi_{0}+\epsilon_{t}]$ there is at most one $\cQ_{t}(\phi)\neq 0$.

We claim that $d(\cQ_{t},\cQ_{t_{0}})\geq\epsilon_{0}=\min\{\epsilon_{t_{0}},\epsilon_{t}\}$. Suppose we have
\[
\inf\{\epsilon>0\,|\, \cQ_{t}(\phi)\subseteq\cQ_{t_{0}}([\phi-\epsilon,\phi+\epsilon]) \mbox{ for each } \phi\in\bR\} = d(\cQ_{t},\cQ_{t_{0}}) < \epsilon_{t_{0}}.
\]
This implies that if $0\neq\cQ_{t}(\phi)$ for some $\phi\in\bR$, then there exists $\psi\in[\phi-\epsilon_{t_{0}},\phi+\epsilon_{t_{0}}]$ so that 
\[
\cQ_{t}(\phi) \subseteq \cQ_{t_{0}}([\phi-\epsilon_{t_{0}},\phi+\epsilon_{t_{0}}])=\cQ_{t_{0}}(\psi).
\]
In particular, $\ss{\cQ_{t}}\subseteq \ss{\cQ_{t_{0}}}$. Analogously, if $d(\cQ_{t},\cQ_{t_{0}})<\epsilon_{t}$, then $\ss{\cQ_{t_{0}}}\subseteq \ss{\cQ_{t}}$. Thus, if $d(\cQ_{t},\cQ_{t_{0}})<\epsilon_{0}$, we have $\ss{\cQ_{t}}=\ss{\cQ_{t_{0}}}$; a contradiction.

This now contradicts the continuity of $\pi'$. Thus, there is no smallest such $t_{0}$. However, in this case, for each $\delta>0$, there exists $t\in (0,1]$ such that $t<\delta$ but  $\ss{\cQ_{t}}\neq\ss{\cQ}$. Arguing as above, we obtain the same contradiction to the continuity of $\pi'$. Hence, we must have $\ss{\cQ}=\ss{\cR}$.
\end{proof}

\begin{theorem} \label{thm:manifold}
The co-stability manifold of $\sD^{b}(\bK Q)$ is homeomorphic to $\bZ\cdot\bC^{2}$.
\end{theorem}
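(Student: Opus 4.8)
The plan is to show that $\Costab{\sD^{b}(\bK Q)}$ decomposes as a disjoint union of $\bZ$ many open pieces, each homeomorphic to $\bC^{2}$. Concretely, there are two things to establish: first, that the path components of $\Costab{\sD^{b}(\bK Q)}$ are precisely the subsets $\sC(n)$, $n\in\bZ$, of Definition~\ref{def:component}; and second, that each $\sC(n)$ is homeomorphic to $\bC^{2}$. Throughout I would use that, since $\sD^{b}(\bK Q)$ is Krull-Schmidt with $\mathrm{rank}\,K_{0}(\sD^{b}(\bK Q))=2$, the cited \cite[Theorem 8.2]{Co-stability} guarantees that $\Costab{\sD^{b}(\bK Q)}$ is a nonempty topological manifold of dimension $4$.

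For the first point I would argue as follows. A topological manifold is locally path-connected, so the path components of $\Costab{\sD^{b}(\bK Q)}$ are open, and hence also closed (each being the complement of the union of the others). By Lemma~\ref{lem:parametrisation} the map $f$ is a bijection, and the semistable indecomposables of $f(n,\phi_{1},\phi_{0},m_{1},m_{0})$ are exactly the shifts $\{\Sigma^{i}N_{n},\Sigma^{j}N_{n+1}\mid i,j\in\bZ\}$ of $N_{n}$ and $N_{n+1}$, which is the defining condition of $\sC(n)$; hence the restriction of $f$ to the slice over $n$ is a bijection onto $\sC(n)$, and the family $\{\sC(n)\}_{n\in\bZ}$ is a partition of $\Costab{\sD^{b}(\bK Q)}$ into nonempty subsets. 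By Lemma~\ref{lem:component} two co-stability conditions lie in the same path component if and only if they have the same semistable indecomposables, i.e. if and only if they lie in the same $\sC(n)$. Therefore each $\sC(n)$ is a path component, and
\[
\Costab{\sD^{b}(\bK Q)}\;\cong\;\coprod_{n\in\bZ}\sC(n)
\]
as topological spaces.

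For the second point, fix $n$ and write $U=\{(x,y)\in\bR^{2}\mid x<y\}\times\bR_{+}\times\bR_{+}$. By Lemmas~\ref{lem:parametrisation} and~\ref{lem:continuous} the map $f_{n}\colon U\to\sC(n)$ is a continuous bijection. The set $U$ is a nonempty open convex subset of $\bR^{4}$, hence homeomorphic to $\bR^{4}$, and so to $\bC^{2}$. It remains to upgrade the continuous bijection $f_{n}$ to a homeomorphism, and this is the one genuinely non-formal step. Since $\sC(n)$ is an open subset of the $4$-manifold $\Costab{\sD^{b}(\bK Q)}$, composing $f_{n}$ locally with a chart realises it near any point as a continuous injection from an open subset of $\bR^{4}$ into $\bR^{4}$, so Brouwer's invariance of domain shows $f_{n}$ is an open map, hence a homeomorphism. (Alternatively one can write down $f_{n}^{-1}$ directly: the modulus $m_{i}=|Z(N_{n+i})|$ depends continuously on $Z\in K_{0}(\sD^{b}(\bK Q))^{*}$, while the $\bR$-valued phase $\phi_{i}$, characterised as the unique $\phi$ with $N_{n+i}\in\cQ(\phi)$, satisfies $|\phi_{i}(\cQ)-\phi_{i}(\cR)|\leq d(\cQ,\cR)$ by the same comparison of co-slices used in the proof of Lemma~\ref{lem:component}, so $\phi_{i}$ is continuous on $\sC(n)$ as well.) Either way $\sC(n)\cong U\cong\bC^{2}$.

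Combining the two points yields $\Costab{\sD^{b}(\bK Q)}\cong\coprod_{n\in\bZ}\bC^{2}=\bZ\cdot\bC^{2}$. I expect the invariance-of-domain step (equivalently, the continuity of the phase functions) to be the only real obstacle; everything else is bookkeeping with the lemmas already in hand, and in particular the argument relies crucially on knowing a priori from \cite[Theorem 8.2]{Co-stability} that $\Costab{\sD^{b}(\bK Q)}$ is a manifold of the expected dimension $4$, which is what lets a continuous bijection be promoted to a homeomorphism.
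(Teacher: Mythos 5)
Your proposal is correct and follows essentially the same route as the paper: Lemma~\ref{lem:component} identifies the path components as the sets $\sC(n)$, $n\in\bZ$, and each $\sC(n)$ is shown to be homeomorphic to $\bC^{2}$ via the parametrisation $f_{n}$ of Lemmas~\ref{lem:parametrisation} and~\ref{lem:continuous}. The only difference is in the final step, where the paper simply exhibits the inverse $g_{n}\colon(Z,\cQ)\mapsto(\phi_{1},\phi_{0},|Z(N_{n+1})|,|Z(N_{n})|)$ and checks it is continuous (your parenthetical alternative), rather than invoking invariance of domain together with \cite[Theorem 8.2]{Co-stability}.
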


\begin{proof}
By Lemma \ref{lem:component}, the subsets $\sC(n)$ of Definitions \ref{def:component} are path components of $\Costab{\sD^{b}(\bK Q)}$. There are $\bZ$ such components $\sC(n)$, each determined by an indecomposable non-regular object $N_{n}$. As such, we only show that each component $\sC(n)$ is homeomorphic to $\bC^{2}$.

The map $f_{n}:\{(x,y)\in\bR^{2}\,|\,x<y\}\times\bR_{+}\times\bR_{+}\rightarrow\sC(n)$ in Lemma \ref{lem:continuous} is a continuous bijection. The inverse map $g_{n}:\sC(n)\rightarrow \{(x,y)\in\bR^{2}\,|\,x<y\}\times\bR_{+}\times\bR_{+}$, given by 
$$(Z,\cQ)\mapsto (\phi_{1},\phi_{0},|Z(N_{n+1}|,|Z(N_{n})|),$$
can also easily be seen to be continuous. Hence $f_{n}:\{(x,y)\in\bR^{2}\,|\,x<y\}\times\bR_{+}\times\bR_{+}\rightarrow \sC(n)$ is a homeomorphism. Now, on the left hand side, the topological space $\{(x,y)\in\bR^{2}\,|\,x<y\}\times\bR_{+}\times\bR_{+}$ is homeomorphic to $\bC^{2}$.
\end{proof}

\end{document}